\newtheorem{theorem}{Theorem}[section]
\newtheorem{corollary}[theorem]{Corollary}
\newtheorem{proposition}[theorem]{Proposition}
\newtheorem{definition}[theorem]{Definition}
\newtheorem{lemma}[theorem]{Lemma}
\theoremstyle{definition}
\newtheorem{remark}[theorem]{Remark}
\newcommand{\twopartdef}[4]
{
	\left\{
		\begin{array}{ll}
			#1 & \mbox{if } #2 \\
			#3 & \mbox{if } #4
		\end{array}
	\right.
}
\newcommand{\hatL}{\mathcal{L}}
\newcommand{\C}{\mathcal{C}}
\newcommand{\curvedLie}{\hat{\mathscr{L}}}
\newcommand{\MC}{\operatorname{MC}}
\newcommand{\Hom}{\operatorname{Hom}}
\newcommand{\ho}{\operatorname{ho}}
\newcommand{\gr}{\operatorname{gr}}
\newcommand{\id}{\operatorname{id}}
\newcommand{\finL}{\mathrm{f}\mathbb{Q}\mathrm{-ho}\curvedLie_*}
\newcommand{\finA}{\mathrm{f}\mathbb{Q}\mathrm{-ho}\mathscr{A}}
\newcommand{\finS}{\mathrm{fN}\mathbb{Q}\mathrm{-ho}\mathscr{S}}
\title{Unbased rational homotopy theory: a Lie algebra approach}
\author{James Maunder}
\address{Max-Planck-Institut f\"ur Mathematik\\
Vivatsgasse 7\\
53111 Bonn\\
Germany}
\email{maunder@mpim-bonn.mpg.de}
\begin{document}

\begin{abstract}
In this paper an algebraic model for unbased rational homotopy theory from the perspective of curved Lie algebras is constructed. As part of this construction a model structure for the category of pseudo-compact curved Lie algebras with curved morphisms will be introduced; one which is Quillen equivalent to a certain model structure of unital commutative differential graded algebras, thus extending the known Quillen equivalence of augmented algebras and differential graded Lie algebras.
\end{abstract}

\maketitle

\tableofcontents

\section*{Introduction}

Rational homotopy theory of connected spaces was developed by Quillen \cite{quillen} from the viewpoint of differential graded Lie algebras and by Sullivan \cite{sullivan} from the viewpoint of commutative differential graded algebras. A standard reference for the correspondence between rational connected spaces---in both the pointed and unpointed cases---and commutative differential graded algebras is \cite{bousfield_gugenheim}. Section \ref{sec_rat_homo} relies heavily upon results of that paper. Within \cite{bousfield_gugenheim} a closed model category structure is constructed for the category of non-negatively graded unital commutative differential graded algebras. Further, it is shown that there exists a pair of Quillen adjoint functors between the category of non-negatively graded commutative differential graded algebras and the category of simplicial sets. These Quillen adjoint functors restrict to an equivalence on the homotopy categories of connected commutative differential graded algebras that admit a minimal model of finite type over $\mathbb{Q}$, and connected rational nilpotent simplicial sets of finite type over $\mathbb{Q}$. In the work of Lazarev and Markl \cite{laz_markl}, this correspondence between non-negatively graded unital commutative differential graded algebras and simplicial sets is generalised to create a disconnected rational homotopy theory by extending to the category of $\mathbb{Z}$-graded unital commutative differential graded algebras. Removing the restriction of non-negative grading, despite seeming relatively harmless, has profound consequences; for example, a commutative algebra concentrated in degree zero is necessarily cofibrant in the category of Bousfield and Gugenheim, but this is not always the case in the category of Lazarev and Markl. The latter category does, however, appear to be more natural as one can, for example, define Harrison-Andr\'e-Quillen cohomology within it, cf.~\cite{block_laz}. The theory of \cite{laz_markl} relies upon a Quillen adjoint pair of functors between the categories of $\mathbb{Z}$-graded commutative differential graded algebras and simplicial sets. This functor gives rise to an adjunction on the level of homotopy categories and, moreover, restricts to an equivalence between the homotopy category of simplicial sets having a finite number of connected components, each being rational, nilpotent, and of finite type (over $\mathbb{Q}$) and a certain subcategory of the homotopy category of $\mathbb{Z}$-graded commutative differential graded algebras (given explicitly loc.~cit.). Sections \ref{sec_modelCat_main} and \ref{sec_rat_homo} rely on some of the work developed by Lazarev and Markl in their paper. Further, Lazarev and Markl constructed a second version of this disconnected rational homotopy theory: one formed using differential graded Lie algebras. This second version was performed by relating the homotopy theory of unital commutative differential graded algebras and differential graded Lie algebras. Relationships of this kind are often referred to as a Koszul duality, the theory of which was established by work of Quillen \cite{quillen}. Quillen showed there existed a duality between differential graded counital cocommutative coalgebras and differential graded Lie algebras under (quite severe) restrictions on the grading of the objects considered. Subsequently, these restrictions were removed by Hinich, cf.~\cite{hinich_stacks}.

The context of this paper is one in which attention is drawn to the categories of commutative differential graded algebras and pseudo-compact curved Lie algebras. A curved Lie algebra is said to be pseudo-compact if it is the inverse limit of an inverse system of finite dimensional nilpotent curved Lie algebras. This inverse limit is taken using strict morphisms, i.e.~using morphisms that commute with the differentials and curvature elements. Herein it will be shown that there exists a Quillen equivalence between the categories of commutative differential graded algebras and pseudo-compact curved Lie algebras. This result is proven by a suitable adaptation of Hinich's methods \cite{hinich_stacks} and influenced by the work of Positselski \cite{positselski}. Consequently an algebraic model of unbased disconnected rational homotopy theory using pseudo-compact curved Lie algebras is obtained. The condition that the curved Lie algebras be pseudo-compact is necessary, for it cannot be removed even when restricted to connected spaces (or simplicial sets).

Numerous papers now discuss the homotopy theory of differential graded coalgebras over different operads. For example Positselski studied coassociative differential graded coalgebras \cite{positselski}. However, the coalgebras were assumed to be conilpotent loc.~cit., and thus the homotopy theory developed therein is not completely general. Furthermore, Positselski worked with curved objects suggesting that when discussing a Koszul duality in more general cases  one side of the Quillen equivalence should be a category consisting of curved objects; a hypothesis that is strengthened by results of \cite{chuang_laz_mannan} and this paper.

Section \ref{sec_curvedLie} begins by defining and discussing some of the basic properties of curved Lie algebras and their morphisms, before moving on to discuss the category of pseudo-compact curved Lie algebras and filtrations of such objects. These filtrations are particularly important to this paper and the homotopy theory developed herein; the filtrations play an essential role in defining the correct notion of a weak equivalence in the category of pseudo-compact curved Lie algebras, cf.~Section \ref{sec_modelCat_main}. Without these filtrations the usual notion of a weak-equivalence, i.e.~a quasi-isomorphism, is simply not fine enough.

Section \ref{sec_adjoint} introduces a pair of adjoint functors between the categories of pseudo-compact curved Lie algebras and unital commutative differential graded algebras; these functors are analogues of the Chevalley-Eilenberg and Harrison complexes (see \cite{weibel} and \cite{barr,harrison} respectively) in homological algebra and are influenced by constructions of Positselski in the associative case, see \cite{positselski}.

A model structure for the category of pseudo-compact curved Lie algebras is defined in Section \ref{sec_modelCat_main}; one which, by the previously mentioned pair of adjoint functors, is Quillen equivalent to the existing model structure of unital commutative differential graded algebras given by Hinich \cite{hinich_homo_alg}---the main result of the paper. As remarked above, the notion of a filtered quasi-isomorphism using the filtrations defined in Section \ref{sec_curvedLie} plays a fundamental role in the homotopy theory of pseudo-compact curved Lie algebras. The proof of this Quillen equivalence relies upon the duality for associative algebras contained within \cite{positselski}, as the functors defined in Section \ref{sec_adjoint} can be `embedded' into the adjunction given in op.~cit., this is similar to the method used in \cite{laz_markl}.

In Section \ref{sec_rat_homo} this equivalence of homotopy categories is then applied, in a similar manner to Lazarev and Markl \cite{laz_markl}, in the construction of a disconnected rational homotopy theory from the perspective of pseudo-compact curved Lie algebras. This viewpoint results in a couple of corollaries. The first corollary shows that the Maurer-Cartan simplicial set functor commutes with coproducts when restricted to the correct subcategories of curved Lie algebras and simplicial sets; this result is an analogue of one proven in \cite{laz_markl}, but the proof herein is much more simple due to the material developed. The second corollary constructs Lie models for mapping spaces between two simplicial sets each composed of finitely many connected components where each component is rational, nilpotent, and of finite type. More precisely, it constructs the mapping space as the Maurer-Cartan simplicial set of a certain curved Lie algebra. This approach to calculating the mapping space is building upon previous results that used rational homotopy theory \cite{BPS,BES,haefliger} and using Lie models \cite{berglund,BFM,BFM2}. The approach via the Maurer-Cartan simplicial set contained herein, however, lends itself to much more calculations than the previous results and is much like work of Lazarev \cite{andrey_MC}. In fact, under suitable conditions, one can combine the results \cite[Theorem 8.1]{andrey_MC} and Corollary \ref{cor_mapping_space} to construct a model for the mapping space as a disjoint union of Maurer-Cartan simplicial sets.

\section*{Notation and conventions}

Throughout this paper it is assumed that all commutative and Lie algebras are over a fixed base field, $k$, of characteristic $0$. There are no further assumptions made about the base field until Section \ref{sec_rat_homo}, when the field will necessarily be assumed to be $\mathbb{Q}$, the rational numbers. All unmarked tensors will be over the base field. Every graded space will be assumed to be $\mathbb{Z}$-graded, unless stated otherwise: Section \ref{sec_rat_homo}, for example, will assume some algebras to be $\mathbb{N}_0$-graded. All commutative algebras possess a unit, unless stated otherwise, and likewise that all cocommutative coalgebras possess a counit.

The notation $\langle a, b, c \dots \rangle$ will be understood to mean the graded $k$-vector space spanned by the basis vectors $a,b,c,\dots $, where the degrees of $a,b,c,\dots$ are specified. Given a graded vector space $V$, the notation $\hat{L}V$ will be understood to mean the completed free graded Lie algebra on the generators of $V$. More specifically,  $\hat{L}V$ is the subspace of the completed tensor algebra $\prod_{i\geq 1} V^{\otimes i}$ spanned by Lie monomials.

This paper will often use an assortment of abbreviations, including: `dg' for `differential graded'; `dgla' for `differential graded Lie algebra'; `cdga' for `commutative differential graded algebra'; `CMC' for `closed model category' (in the sense of \cite{quillen}, for a review of this material consult \cite{dwyer_spalinski}); `LLP' for `left lifting property'; and `RLP' for `right lifting property'.

Given a homologically graded space, $V$, define the suspension, $\Sigma V$, to be the homologically graded space using the convention $(\Sigma V)_i =V_{i-1}$. In the cohomological setting the suspension is defined by $(\Sigma V)^i =V^{i+1}$. When dealing with objects that are endowed with a topology (such as those that are pseudo-compact) taking the dual will be understood to mean taking the topological dual. In more detail, this is the functor that takes an object to the set of continuous morphisms from it to the ground field. Therefore, since any continuous functional on $V^*$ must factor through a finite dimensional quotient, one has (by the well known property of finite dimensional spaces) the continuous functional corresponds to an element of $V$, i.e.~within this paper it will always be the case that $V^{**}\cong V$. Applying the functor of linear discrete (or topological) duality takes homologically graded spaces to cohomologically graded ones, and vice versa---more precisely, denoting the dual by an asterisk, it can be seen that $(V_i)^*=(V^*)^{i}$. Note, a homologically graded space can be made into a cohomologically graded one (and vice versa), by setting $V_i=V^{-i}$. Moreover, $\Sigma V^*$ will be used to denote $\Sigma (V^*)$, and with this notation there exists an isomorphism $(\Sigma V )^*\cong \Sigma^{-1} V^*$.

All Lie algebras will be given the homological grading with lower indices, whereas commutative algebras will be given the cohomological grading with upper indices. There is, however, an important exception to this convention: the tensor product of a homologically graded Lie algebra with the cohomologically graded cdga $\Omega$ of the Sullivan-de Rham forms. In this context, $\Omega$ is considered as homologically graded using the above relations (i.e.~$\Omega_i:=\Omega^{-i}$ for each $i\geq 0$), ensuring the tensor product is a homologically graded curved Lie algebra. This convention is relevant in Section \ref{sec_rat_homo}.

Although many of the Lie algebras in this paper (namely the curved Lie algebras) will not necessarily be complexes, they resemble them and possess an odd derivation often referred to (abusively) as the differential. In the homological grading, the derivation possessed by a curved Lie algebra has degree $-1$. Given any homogeneous element, $x$, of some given graded algebra its degree is denoted by $|x|$. Therefore, in the homological setting a Maurer-Cartan element is of degree $-1$ and the curvature element is of degree $-2$.

\section{The category of curved Lie algebras}\label{sec_curvedLie}

In this section the category of pseudo-compact curved Lie algebras with curved morphisms will be defined and some of its basic properties are discussed. Later, in Section \ref{sec_model_cat}, the category of pseudo-compact curved Lie algebras with curved morphisms will be shown to possess a model structure. Further, a pair of adjoint functors will be defined in Section \ref{sec_adjoint} that induce a Quillen equivalence between this model category and the model category of unital cdgas given by Hinich \cite{hinich_homo_alg}, this is the content of Theorem \ref{thm_main}.

\begin{definition}\label{def_curvedLie}
A curved Lie algebra is the triple $(\mathfrak{g},d,\omega)$ where $\mathfrak{g}$ is a graded Lie algebra, $d$ is a derivation of $\mathfrak{g}$ with $|d|=-1$ (known as the differential), and $\omega\in\mathfrak{g}$ with $|\omega |=-2$ (known as the curvature) such that:
\begin{itemize}
\item $d\circ d (x)=[\omega , x]$, for all $x\in\mathfrak{g}$;
\item $d\omega = 0$.
\end{itemize}
\end{definition}

\begin{remark}
Notice that the term `differential' is an abuse of notation as the differential of a curved Lie algebra need not square to zero. Accordingly, some authors prefer to use the term predifferential. Within this paper, the term `differential' will be used, despite the abuse of notation.
\end{remark}

A curved Lie algebra with zero curvature is precisely a dgla. As with dglas, Maurer-Cartan elements play a key role in the theory of curved Lie algebras.

\begin{definition}
A MC (Maurer-Cartan) element of a curved Lie algebra, $(\mathfrak{g},d,\omega )$  is an element $\xi\in\mathfrak{g}$ with $|\xi |=-1$ such that the Maurer-Cartan equation,
\[
\omega + d\xi + \frac{1}{2}[\xi,\xi]=0,
\]
is satisfied.
\end{definition}

\begin{remark}
If $\omega=0$ (i.e.~$\mathfrak{g}$ is a differential graded Lie algebra) then the classical MC equation is recovered:
\[
d\xi +\frac{1}{2}[\xi,\xi]=0.
\]
\end{remark}

It will be common practice within this paper to shorten the notation of a curved Lie algebra, $(\mathfrak{g},d_\mathfrak{g},\omega_\mathfrak{g})$, to only its underlying graded Lie algebra, $\mathfrak{g}$, where there is no ambiguity in doing so. In the shortened case, the differential and curvature will be denoted by the obvious subscript.

\begin{definition}
A curved Lie algebra is said to be pseudo-compact if it is an inverse limit of finite dimensional, nilpotent curved Lie algebras.
\end{definition}

Here it is important to note that there exists a natural topology on any given pseudo-compact curved Lie algebra (or more generally pseudo-compact vector space) generated by subspaces of finite codimension, which is induced by taking the inverse limit. Therefore, the bracket and differential of a pseudo-compact vector space will be continuous with respect to this topology. Moreover, when considering morphisms between pseudo-compact spaces they will always be assumed to continuous with respect to this topology. In particular, when taking the dual of such spaces it will be understood that it is the topological dual being taken as opposed to the linear discrete one. For more details on pseudo-compact algebras see \cite[Section 3]{gabriel}, \cite[Appendix]{keller_yang}, and \cite[Section 4]{vandenbergh}.

\begin{remark}
The definition of a pseudo-compact curved Lie algebra is similar to the definition of a pronilpotent Lie algebra, but more restrictive since every Lie algebra in the inverse system is assumed to be finite dimensional.
\end{remark}

\begin{proposition}\label{prop_completed_tensor_Lie_structure}
Take a pseudo-compact curved Lie algebra, $(\mathfrak{g},d_\mathfrak{g},\omega_\mathfrak{g})$, where $\mathfrak{g}=\varprojlim_i \mathfrak{g}_i$, and a unital cdga, $A$, both homologically graded\footnote{Note this is against the usual convention of the paper, but such an issue is easily rectified (as explained in the notations and conventions).}. The completed tensor product
\[
\mathfrak{g}\hat{\otimes}A=\varprojlim_i \mathfrak{g}_i \otimes A
\]
possesses a well defined pseudo-compact curved Lie algebra structure given as follows: the curvature is defined by $\omega_\mathfrak{g}\hat{\otimes} 1$; the differential is defined on elementary tensors by $d(x \hat{\otimes} a)=d_\mathfrak{g}x \hat{\otimes}a + (-1)^{|x|} x \hat{\otimes} d_A a$; and the bracket is defined on elementary tensors by $ [x\hat{\otimes}a,y\hat{\otimes}b]=[x,y]\hat{\otimes}(-1)^{|y||a|}ab$.
\end{proposition}
\begin{proof}
This is standard and straightforward.
\end{proof}

It will be common for the adjective `completed' to be dropped and $\hat{\otimes}$ to be referred to as the tensor product.

\begin{definition}
A curved morphism of curved Lie algebras is defined to be the pair
\[
(f,\alpha)\colon(\mathfrak{g},d_\mathfrak{g},\omega_\mathfrak{g})\to(\mathfrak{h},d_\mathfrak{h},\omega_\mathfrak{h}),
\]
where $f\colon\mathfrak{g}\to\mathfrak{h}$ is a morphism of graded Lie algebras and $\alpha\in\mathfrak{h}$ with $|\alpha |=-1$ such that:
\begin{itemize}
\item $d_\mathfrak{h} f(x)=f(d_\mathfrak{g} x) - [\alpha , f(x)]$, for all $x\in\mathfrak{g}$;
\item $\omega_\mathfrak{h}=f(\omega_\mathfrak{g}) - d_\mathfrak{h} \alpha - \frac{1}{2}[\alpha,\alpha]$.
\end{itemize}

The image of an element $x\in\mathfrak{g}$ under the action of the curved morphism $(f,\alpha )$ is defined to be $f(x) + \alpha\in\mathfrak{h}$.

The composition of two curved morphisms, $(f,\alpha )$ and $(g,\beta )$, (when such a composition exists) is defined as follows:
\[
(f,\alpha)\circ(g,\beta)=(f\circ g,\alpha + f(\beta)).
\]

A morphism with $\alpha=0$ is said to be strict.
\end{definition}

\begin{remark}
Note that a curved morphism will map $0_\mathfrak{g} \mapsto \alpha$. In fact, a curved morphism $(f,\alpha)$ is equivalent to the composition $(id,\alpha)\circ (f,0)$.
\end{remark}

\begin{remark}
In the case of a strict morphism it can be readily seen that the morphism is simply a graded Lie algebra morphism that respects the differentials and the image of the curvature of the domain is the curvature of the codomain. These morphisms are exactly those of \cite{chuang_laz_mannan}. The $\alpha$ part of a curved morphism can, therefore, be seen to act as an obstruction to the differentials commuting with the graded Lie algebra morphism and to the graded Lie algebra morphism preserving the curvature.
\end{remark}

Just as with dgla morphisms, curved Lie algebra morphisms preserve MC elements.

\begin{proposition}
Given a curved Lie algebra morphism 
\[
(f,\alpha)\colon(\mathfrak{g},d_\mathfrak{g},\omega_\mathfrak{g})\to(\mathfrak{h},d_\mathfrak{h},\omega_\mathfrak{h})
\]
and a MC element $\xi\in\mathfrak{g}$, the element $f(\xi)+\alpha \in\mathfrak{h}$ solves the MC equation.
\end{proposition}
\begin{proof}
A simple application of the definitions.
\end{proof}

\begin{proposition}
Given a morphism of curved Lie algebras, $(f,\alpha)\colon(\mathfrak{g},d_\mathfrak{g},\omega_\mathfrak{g})\to(\mathfrak{h},d_\mathfrak{h},\omega_\mathfrak{h})$, there exists an inverse morphism $(f,\alpha)^{-1}\colon(\mathfrak{h},d_\mathfrak{h},\omega_\mathfrak{h})\to(\mathfrak{g},d_\mathfrak{g},\omega_\mathfrak{g})$ such that:
\begin{itemize}
\item $(f,\alpha )\circ (f,\alpha )^{-1}=(\id_\mathfrak{g},0)$, and
\item $(f,\alpha )^{-1}\circ (f,\alpha )=(\id_\mathfrak{h},0)$,
\end{itemize}
if, and only if, $f$ is an isomorphism of graded Lie algebras. Further, given an inverse graded Lie algebra morphism $f^{-1}$ of $f$, the inverse of $(f,\alpha )$ is given by $(f^{-1},-f^{-1}(\alpha))$.
\end{proposition}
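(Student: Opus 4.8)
The plan is to prove the two implications separately, with the closed formula for the inverse emerging along the way. The forward direction---that the existence of a two-sided inverse forces $f$ to be an isomorphism of graded Lie algebras---will be immediate from the composition rule, whereas the converse requires me to check that the proposed pair $(f^{-1},-f^{-1}(\alpha))$ is a genuine curved morphism and then to confirm that the two composites are the identities.

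For the forward direction, suppose there is a curved morphism $(g,\gamma)\colon\mathfrak{h}\to\mathfrak{g}$ realising both identities. Applying the composition formula $(f,\alpha)\circ(g,\gamma)=(f\circ g,\alpha+f(\gamma))$ and comparing first components with the identity morphism gives $f\circ g=\id$; the other identity yields $g\circ f=\id$ in the same way. Thus $g$ is a two-sided inverse of $f$ as a morphism of graded Lie algebras, so $f$ is an isomorphism. Comparing the second components additionally records $\alpha+f(\gamma)=0$, which already forces $\gamma=-f^{-1}(\alpha)$ and so anticipates the stated formula.

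For the converse, assume $f$ is a graded Lie algebra isomorphism with inverse $f^{-1}$, and set $\beta:=-f^{-1}(\alpha)\in\mathfrak{g}$, noting $|\beta|=-1$. First I would verify that $(f^{-1},\beta)$ is a curved morphism. The derivation axiom follows by substituting $x=f^{-1}(y)$ into the axiom $d_\mathfrak{h}f(x)=f(d_\mathfrak{g}x)+[\alpha,f(x)]$, applying $f^{-1}$, and using that $f^{-1}$ commutes with the bracket; rearranging produces exactly $d_\mathfrak{g}f^{-1}(y)=f^{-1}(d_\mathfrak{h}y)+[\beta,f^{-1}(y)]$. The curvature axiom is the one genuine computation: applying $f^{-1}$ to $\omega_\mathfrak{h}=f(\omega_\mathfrak{g})+d_\mathfrak{h}\alpha-\tfrac12[\alpha,\alpha]$ and then feeding $y=\alpha$ into the derivation identity just obtained (which expresses $f^{-1}(d_\mathfrak{h}\alpha)$ as $-d_\mathfrak{g}\beta+[\beta,\beta]$) collapses, after careful bookkeeping of the $\tfrac12$ factors, to $\omega_\mathfrak{g}=f^{-1}(\omega_\mathfrak{h})+d_\mathfrak{g}\beta-\tfrac12[\beta,\beta]$. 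Finally, the composition formula gives $(f,\alpha)\circ(f^{-1},\beta)=(\id,\alpha+f(\beta))=(\id,0)$ and $(f^{-1},\beta)\circ(f,\alpha)=(\id,\beta+f^{-1}(\alpha))=(\id,0)$ directly from $\beta=-f^{-1}(\alpha)$.

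The only step demanding attention is the curvature identity in the converse; everything else is a formal consequence of the composition rule and the multiplicativity of $f^{-1}$. I expect the sign and factor-of-$\tfrac12$ accounting there---recognising that the contribution $+[\beta,\beta]$ coming from $f^{-1}(d_\mathfrak{h}\alpha)$ combines with $-\tfrac12 f^{-1}([\alpha,\alpha])=-\tfrac12[\beta,\beta]$ to leave $+\tfrac12[\beta,\beta]$---to be the main and essentially only obstacle.
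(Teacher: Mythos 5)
Your proposal is correct and follows the same route as the paper: extract $f\circ g=\id$, $g\circ f=\id$, and $\gamma=-f^{-1}(\alpha)$ from the composition formula in one direction, and exhibit $(f^{-1},-f^{-1}(\alpha))$ as the inverse in the other. The paper simply asserts that this pair "clearly" works, whereas you carry out the verification that it satisfies the curved-morphism axioms (the derivation identity by substitution, and the curvature identity via the $f^{-1}(d_\mathfrak{h}\alpha)=-d_\mathfrak{g}\beta+[\beta,\beta]$ bookkeeping), which checks out.
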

\begin{proof}
If the graded Lie algebra morphism $f$ is invertible, then clearly $(f^{-1},-f^{-1}(\alpha))$ gives a two sided inverse of the morphism $(f,\alpha )$. Conversely, if $(f,\alpha )$ is invertible with inverse $(f,\alpha )^{-1}=(g,\beta )$ then
\begin{itemize}
\item $(f,\alpha )\circ (f,\alpha )^{-1}=(f\circ g,f(\beta) +\alpha)=(\id_\mathfrak{g},0)$, and
\item $(f,\alpha )^{-1}\circ (f,\alpha )=(g\circ f,g(\alpha)+\beta)=(\id_\mathfrak{h},0)$.
\end{itemize}
From these equations it is clear that $g$ must be a two sided graded Lie algebra inverse for $f$. Additionally, it can easily be seen that $\beta=-g(\alpha )$.
\end{proof}

It is important to note that a curved Lie algebra may be isomorphic to one with zero curvature (i.e.~a dgla). To see this, let $\operatorname{ad}_\xi (-)=[\xi ,-]$ be the adjoint action and take the curved isomorphism
\[
(\id,-\xi)\colon(\mathfrak{g},d,\omega)\to\left(\mathfrak{g},d+\operatorname{ad}_\xi,\omega+d\xi+\frac{1}{2}[\xi,\xi]\right),
\]
which has inverse $(\id,\xi)$. The curvature of the codomain is zero precisely when $\xi$ is a MC element of $(\mathfrak{g},d,\omega)$. Whence, the resulting curved Lie algebra will have zero curvature if, and only if, the element $\xi$ belongs to the set of MC elements of $(\mathfrak{g},d,\omega )$. In fact, these morphisms actually correspond to twisting by the element $\xi$, and such a twisting is denoted $\mathfrak{g}^\xi$. Twists of dgla are one way in which curved Lie algebras arise in mathematics. For more details see \cite{braun,chuang_laz}, but note the notion of a curved morphism is not used in either citation.

\begin{definition}
The category whose objects are pseudo-compact curved Lie algebras and morphisms are given by the continuous (with respect to the topology induced in taking the inverse limit) curved morphisms between them will be referred to as the category of pseudo-compact curved Lie algebras and will be denoted by $\curvedLie$.
\end{definition}

In \cite{block_laz} an antiequivalence between the categories of pseudo-compact Lie algebras and conilpotent Lie coalgebras is given using the functor of linear duality. Note that in loc.~cit.~pseudo-compact Lie algebras were referred to as pronilpotent Lie algebras---a term that is not ideal, cf.~\cite[Remark 7.2]{laz_markl}. Despite this, pseudo-compact curved Lie algebras are pronilpotent in the classical sense, i.e.~they are isomorphic to an inverse limit of nilpotent curved Lie algebras. 

\begin{proposition}\label{prop_pronilpotent}
For $\mathfrak{g}\in\curvedLie$, let $\mathfrak{g}=F_1 \mathfrak{g}\supseteq F_2 \mathfrak{g} \supseteq \ldots$ denote the lower central series. Then $\mathfrak{g}=\varprojlim_i \frac{\mathfrak{g}}{F_i \mathfrak{g}}$.
\end{proposition}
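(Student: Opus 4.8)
The plan is to compare the tower $\{\mathfrak{g}/\mathfrak{g}_i\}$ coming from the lower central series with the defining inverse system exhibiting $\mathfrak{g}$ as pseudo-compact, and to show that the canonical map $\mathfrak{g}\to\varprojlim_i \mathfrak{g}/\mathfrak{g}_i$ is a bijection. Write $\mathfrak{g}=\varprojlim_\alpha \mathfrak{g}_\alpha$ with each $\mathfrak{g}_\alpha$ finite-dimensional and nilpotent; after replacing each $\mathfrak{g}_\alpha$ by the image $p_\alpha(\mathfrak{g})$ of the projection $p_\alpha\colon\mathfrak{g}\to\mathfrak{g}_\alpha$ (still finite-dimensional, nilpotent, and carrying the induced curvature and differential) I may assume all projections and transition maps are surjective. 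Set $K_\alpha=\ker p_\alpha$; these are a basis of open neighbourhoods of $0$ for the pseudo-compact topology, $\bigcap_\alpha K_\alpha=0$, and completeness of $\mathfrak{g}$ says exactly that $\mathfrak{g}=\varprojlim_\alpha \mathfrak{g}/K_\alpha$.

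The first observation is that nilpotency of the pieces forces the lower central series to sit cofinally inside the $K_\alpha$. Since $p_\alpha$ is a surjection of Lie algebras, functoriality of the lower central series gives $p_\alpha(\mathfrak{g}_i)=(\mathfrak{g}_\alpha)_i$ for every $i$; as $\mathfrak{g}_\alpha$ is nilpotent there is some $n_\alpha$ with $(\mathfrak{g}_\alpha)_{n_\alpha}=0$, whence $\mathfrak{g}_{n_\alpha}\subseteq K_\alpha$. Injectivity of $\mathfrak{g}\to\varprojlim_i\mathfrak{g}/\mathfrak{g}_i$ is then immediate, since $\bigcap_i \mathfrak{g}_i\subseteq\bigcap_\alpha \mathfrak{g}_{n_\alpha}\subseteq\bigcap_\alpha K_\alpha=0$; in particular the lower central series filtration is Hausdorff.

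For surjectivity I would take a compatible system $(\bar x_i)_i\in\varprojlim_i\mathfrak{g}/\mathfrak{g}_i$ and lift it to representatives $x_i\in\mathfrak{g}$, so that $x_j-x_i\in\mathfrak{g}_i$ whenever $j\geq i$. The containments $\mathfrak{g}_{n_\alpha}\subseteq K_\alpha$ show that $(x_i)_i$ is a Cauchy net for the pseudo-compact topology: given $\alpha$, any $i,j\geq n_\alpha$ satisfy $x_j-x_i\in\mathfrak{g}_{n_\alpha}\subseteq K_\alpha$. By completeness this net converges to some $x\in\mathfrak{g}$. It then remains to verify $x-x_i\in\mathfrak{g}_i$ for each fixed $i$; writing $x-x_i=(x-x_j)+(x_j-x_i)$ with $j\geq i$ and letting $j\to\infty$ exhibits $x-x_i$ as a limit of the elements $x_j-x_i\in\mathfrak{g}_i$, so $x-x_i$ lies in the closure $\overline{\mathfrak{g}_i}$. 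Hence $x$ maps to $(\bar x_i)_i$, provided each $\mathfrak{g}_i$ is closed.

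This last point is where I expect the genuine work to lie, and it is the only nontrivial input. In the pseudo-compact category the term $\mathfrak{g}_i=[\mathfrak{g}_{i-1},\mathfrak{g}]$ is the image of the continuous bracket $\mathfrak{g}_{i-1}\hat{\otimes}\mathfrak{g}\to\mathfrak{g}$; since $\mathfrak{g}_{i-1}\hat{\otimes}\mathfrak{g}$ is pseudo-compact (linearly compact) and the bracket is continuous, its image is a closed, indeed linearly compact, subspace, so $\overline{\mathfrak{g}_i}=\mathfrak{g}_i$. Equivalently, with surjective transition maps the tower $\{(\mathfrak{g}_\alpha)_i\}_\alpha$ again has surjective structure maps and this closed subspace is identified as $\mathfrak{g}_i=\varprojlim_\alpha(\mathfrak{g}_\alpha)_i\subseteq\varprojlim_\alpha \mathfrak{g}_\alpha=\mathfrak{g}$. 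Granting this, injectivity and surjectivity combine to give $\mathfrak{g}=\varprojlim_i\mathfrak{g}/\mathfrak{g}_i$, which simultaneously records that the lower central series filtration of any $\mathfrak{g}\in\curvedLie$ is complete and Hausdorff, as used in the subsequent sections.
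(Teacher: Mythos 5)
Your argument is correct, and it reaches the conclusion by a genuinely different route from the paper. The paper's proof is a purely categorical two-line computation: exactness of filtered limits of finite-dimensional vector spaces gives $\mathfrak{g}_i\cong\varprojlim_n(\mathfrak{g}^n)_i$ and $\mathfrak{g}/\mathfrak{g}_i\cong\varprojlim_n\mathfrak{g}^n/(\mathfrak{g}^n)_i$, nilpotency collapses the inner limit $\varprojlim_i\mathfrak{g}^n/(\mathfrak{g}^n)_i\cong\mathfrak{g}^n$, and then the two inverse limits are interchanged. You instead work at the level of elements and the linear topology: you reduce to surjective projections, prove cofinality of the lower central series among the open kernels $K_\alpha$ (the same use of nilpotency, but packaged as $\mathfrak{g}_{n_\alpha}\subseteq K_\alpha$), and then check injectivity and surjectivity of the canonical map directly via a Cauchy-net argument. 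The two proofs rest on the same two inputs --- nilpotency of the finite-dimensional pieces and exactness/completeness --- but yours is more hands-on and, usefully, makes explicit a point the paper leaves implicit: that each term $\mathfrak{g}_i$ of the lower central series must be closed (equivalently, identified with $\varprojlim_\alpha(\mathfrak{g}_\alpha)_i$) for the surjectivity step to go through. Your justification of this via the image of the continuous bracket $\mathfrak{g}_{i-1}\hat{\otimes}\mathfrak{g}\to\mathfrak{g}$ out of a linearly compact source is sound, though it amounts to adopting the closed (completed) version of the lower central series as the definition, which is the natural convention in the pseudo-compact category and is consistent with the identification the paper uses. The price of your approach is that the initial reduction to surjective projections and the Mittag-Leffler-type stabilisation of images is asserted rather than proved; the price of the paper's approach is that the closedness issue is swept into the unproved isomorphism $\mathfrak{g}_i\cong\varprojlim_n(\mathfrak{g}^n)_i$.
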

\begin{proof}
For $\mathfrak{g}\in\curvedLie$ one has, by definition, that $\mathfrak{g}=\varprojlim_n \mathfrak{g}_n$, where each $\mathfrak{g}_n$ is a finite dimensional curved Lie algebra. Now, the filtered limit of finite dimensional vector spaces is exact (since any filtered system of finite dimensional vector spaces satisfies the Mittag-Leffler condition see \cite[Section 3.5]{weibel}) and so by appropriate usage of exact sequences one arrives at $F_i (\mathfrak{g}_n) \cong \varprojlim_n F_i (\mathfrak{g}_n)$, for $i\geq 1$, and $\mathfrak{g}/F_i \mathfrak{g} \cong \varprojlim_n \mathfrak{g}_n/F_i (\mathfrak{g}^n)$. By definition, the finite dimensional curved Lie algebra $\mathfrak{g}_n$ is nilpotent, and so $\varprojlim_i \mathfrak{g}_n/F_i (\mathfrak{g})\cong\mathfrak{g}_n$. Whence,
\[
\varprojlim_i \mathfrak{g}/F_i\mathfrak{g} \cong \varprojlim_i \varprojlim_n \mathfrak{g}_n /F_i (\mathfrak{g}^n) \cong \varprojlim_n \varprojlim_i \mathfrak{g}_n / F_i (\mathfrak{g}^n) \cong \varprojlim_n \mathfrak{g}_n =\mathfrak{g}.\qedhere
\]
\end{proof}

\subsection{Filtrations}

This section briefly discusses filtered curved Lie algebras and some basic properties of filtrations that will be insisted upon. Filtrations for cdgas are also (even more briefly) discussed. For more details regarding filtrations consult \cite[Section 5.4]{weibel}. It should be noted that a more general notion of filtration is treated in loc.~cit.

\subsubsection{Filtrations of curved Lie algebras}

Filtrations play an important role in the model structure for curved Lie algebras (see Section \ref{sec_modelCat_main}) and are fundamental in defining the weak equivalences. The key details of the filtrations of curved Lie algebras used within this paper are briefly discussed here.

\begin{definition}
A curved Lie algebra, $(\mathfrak{g},d,\omega )$, is said to be filtered when equipped with a (descending) filtration denoted by $\lbrace F_i \mathfrak{g}\rbrace_{i\in \mathbb{N}}$ corresponding to a tower
\[
\mathfrak{g} = F_1 \mathfrak{g} \supseteq F_2 \mathfrak{g} \supseteq F_3 \mathfrak{g} \supseteq \dots
\]
of subspaces $F_i \mathfrak{g}$ for all $i\in\mathbb{N}$ such that the filtration respects the bracket and differential, i.e.~
\[
[F_i \mathfrak{g} , F_j \mathfrak{g}]\subseteq F_{i+j} \mathfrak{g}
\;\;\mathrm{and}\;\;
d(F_i \mathfrak{g})\subseteq F_i \mathfrak{g}.
\]
\end{definition}

Notice here that only positively indexed filtrations of curved Lie algebras are considered. This is an important point as even the seemingly harmless step of allowing non-negatively indexed filtrations will drastically change the meaning of a filtered quasi-isomorphism (see Definition \ref{def_qiso}).

\begin{definition}
Given a filtration $\lbrace F_i \mathfrak{g} \rbrace_{i\in \mathbb{N}}$ of a curved Lie algebra $\mathfrak{g}$, the associated graded algebra, denoted $\gr_F \mathfrak{g}$, is the algebra given by the sum
\[
\bigoplus_{i\in \mathbb{N}} \frac{F_i \mathfrak{g}}{F_{i+1} \mathfrak{g}}.
\]
\end{definition}

\begin{proposition}
The associated graded algebra of a filtered curved Lie algebra inherits the bracket and differential.
\end{proposition}
\begin{proof}
It is a well-known and straightforward consequence of the fact that the filtration respects the bracket and differential.
\end{proof}

\begin{definition}
Given a filtered pseudo-compact curved Lie algebra, $\mathfrak{g}$, its filtration is said to be complete if
\[
\mathfrak{g}=\varprojlim_i \frac{\mathfrak{g}}{F_i \mathfrak{g}}.\
\]
\end{definition}

\begin{remark}
The completeness condition is precisely that of $\mathfrak{g}$ being pronilpotent, i.e.~an inverse limit of nilpotent curved Lie algebras.
\end{remark}

\begin{definition}
A filtration is said to be Hausdorff if $\bigcap_{i\geq 1} F_i A=0$.
\end{definition}

\begin{definition}
Let $\mathfrak{g}$ be a pseudo-compact curved Lie algebra. The filtration induced by the lower central series of $\mathfrak{g}$ is the filtration inductively given by
\[
F_1 \mathfrak{g}=\mathfrak{g}, \quad F_2 \mathfrak{g}=[F_1 \mathfrak{g}, \mathfrak{g}], \quad F_3 \mathfrak{g}=[F_2 \mathfrak{g},\mathfrak{g}], \quad \dots, \quad F_{i+1} \mathfrak{g} = [F_i \mathfrak{g},\mathfrak{g}],\quad \dots
\]
\end{definition}

\begin{proposition}\label{prop_lower_central_complete}
The filtration given by the lower central series of a curved Lie algebra (above) respects the bracket and differential. Moreover, the filtration is complete and Hausdorff.
\end{proposition}
\begin{proof}
The fact that the filtration respects the differential and bracket are quick checks. The final statement is a direct consequence of Proposition \ref{prop_pronilpotent}.
\end{proof}

\begin{definition}
Complete (and thus Hausdorff) filtrations that respect the bracket and differential subject to the additional condition that $d^2=0$ in the associated graded objects (i.e.~the associated graded algebras are complexes) are said to be admissible.
\end{definition}

\begin{proposition}\label{prop_canonical_filtration}
The associated graded curved Lie algebra of a filtered curved Lie algebra, where the filtration is given by the lower central series, will be a true complex, i.e.~$d^2=0$ on the associated graded.
\end{proposition}
\begin{proof}
To see that $d^2=0$ on the associated graded notice that the adjoint action of the curvature will increase the filtration degree whereas the action of $d$ fixes the filtration degree.
\end{proof}

Combining Proposition \ref{prop_lower_central_complete} and Proposition \ref{prop_canonical_filtration}, one immediately sees the following.

\begin{proposition}
The filtration given by the lower central series of a curved Lie algebra is an admissible filtration.\qed
\end{proposition}

\begin{definition}
A morphism of filtered curved Lie algebras is a morphism of curved Lie algebras that is compatible with the filtrations in the sense that $(f,\alpha ) F_i \mathfrak{g} \subseteq F_i \mathfrak{h}$.
\end{definition}

\begin{proposition}
A morphism of filtered curved Lie algebras induces a well-defined morphism of the associated graded objects.
\end{proposition}
\begin{proof}
A straightforward consequence of the definitions.
\end{proof}

The objects of $\curvedLie$ do not form complexes (as it is not necessary that $d^2=0$) and as such there is no natural definition of a quasi-isomorphism. It is, however, possible to define the notion of a filtered quasi-isomorphism. Filtered quasi-isomorphisms are of particular importance when defining the weak equivalences of the model structure in Section \ref{sec_model_cat}.

\begin{definition}\label{def_qiso}
Let $\mathfrak{g},\mathfrak{h}\in\curvedLie$ be endowed with admissible filtrations, both denoted by $F$. A filtered morphism $(f,\alpha)\colon\mathfrak{g}\to\mathfrak{h}$ in $\curvedLie$ is said to be a filtered quasi-isomorphism if the induced morphism $\gr_F (f,\alpha)\colon\gr_F \mathfrak{g}\to \gr_F \mathfrak{h}$ is a quasi-isomorphism of dgla, i.e.~$gr_F (f,\alpha)$ induces an isomorphism on the level of homology.
\end{definition}

\begin{remark}
It is important that the filtrations be positively indexed in the definition of a filtered quasi-isomorphism. Even the seemingly innocent adaptation to allow non-negatively indexed filtrations could allow for the case when the filtrations are concentrated solely in degree $0$ and in this case a filtered quasi-isomorphism would be no different to a quasi-isomorphism. This is a problem because it is necessary that the notion of a weak equivalence in the model category of pseudo-compact curved Lie algebras be finer than a quasi-isomorphism, and so only positively indexed filtrations are considered.
\end{remark}

It should be remarked here that no claim is made about the closure of filtered quasi-isomorphisms under composition.

\subsubsection{Filtrations of cdgas}

The above constructions for curved Lie algebras have---where applicable---a counterpart for cdgas whose definitions follow easily from those given above. In this paper, the filtrations of cdgas are different and subject to less restrictions than the filtrations of curved Lie algebras. These differences reflect the contravariant duality between cdgas and curved Lie algebras. The only assumptions made about filtrations of cdgas in this paper are the following: they are ascending and are cocomplete. No other assumptions are made about the filtrations on cdgas. In particular, there is no restriction on the indexing, nor is it necessary that a filtration of a cdga be complete.

\begin{definition}
An ascending filtration of a cdga $A$
\[
F_1 A \subseteq F_2 A \subseteq F_3 A \subseteq \dots
\]
is called cocomplete (or exhaustive) if $A=\varinjlim F_i A$.
\end{definition}

It will only be necessary to consider filtrations of cdgas in a couple of proofs, and in particular the proof of Lemma \ref{lemma_Key}. This is quite different to the fundamental role admissible filtrations play in defining the weak equivalences of pseudo-compact curved Lie algebras.

\section{Analogues of the Chevalley-Eilenberg and Harrison complexes}\label{sec_adjoint}

The category of unital cdgas with the standard cdga morphisms will be denoted by $\mathscr{A}$. Within this section a pair of contravariant functors will be defined (extending those of \cite{laz_markl} and influenced by work of \cite{positselski}). These functors will be shown to be adjoint, providing the base for the Quillen equivalence proven in Theorem \ref{thm_main}.

In the following, a contravariant functor $\hatL \colon \mathscr{A}\to\curvedLie$ will be constructed. Several ideas of Positselski \cite{positselski} feature heavily in this construction. First, note that given a unital cdga, $A \in \mathscr{A}$, the underlying field is a subspace as $k=k\cdot 1\subseteq A$, and it is, therefore, possible to take a linear retraction $\epsilon \colon A\to k$ (which may not commute with the differential or multiplication). Let $A_+$ denote the kernel of $\epsilon$. As vector spaces it is evident that $A= A_+\oplus k$. Note, if $A$ is augmented then $\epsilon$ can be chosen to be an augmentation (i.e.~$\epsilon$ is a morphism of dg algebras) and hence $A_+$ is the augmentation ideal, i.e.~the decomposition holds on the level of cdga. Since $\epsilon$ is not necessarily a morphism of dg algebras the differential $d\colon A_+\to A$ and the multiplication $m\colon A_+\otimes A_+ \to A$ can be split as $d=(d_+,d_k)$ and $m=(m_+,m_k)$, where\hfill
\vspace{11pt}

\noindent
\begin{minipage}{0.48\textwidth}
\[
d_+\colon A_+\to A_+;
\]
\[
d_k\colon A_+\to k;\;\mathrm{and}
\]
\end{minipage}
\begin{minipage}{0.48\textwidth}
\[
m_+\colon A_+ \otimes A_+ \to A_+;
\]
\[
m_k\colon A_+ \otimes A_+ \to k.
\]
\end{minipage}\hfill
\vspace{11pt}

\begin{definition}\label{def_harrison_1}
Given a unital cdga $A \in \mathscr{A}$ and a linear retraction $\epsilon \colon  A\to k$ with kernel $A_+$, let $\hatL(A)$ denote the pseudo-compact curved Lie algebra given by $(\hat{L}\Sigma A^*_+ ,d^*_+ + m^*_+,d^*_k + m^*_k) $. Here the derivations $d^*_+$ and $m^*_+$ are the extensions of the duals to the whole curved Lie algebra (given by the Leibniz rule); the notation used here (somewhat abusively) is the same for both.
\end{definition}

\begin{remark} The pseudo-compact curved Lie algebra $\hatL (A)$ has zero curvature if the linear retraction $\epsilon$ is a morphism of unital cdga, i.e.~if $A$ is augmented with augmentation $\epsilon$. This is because the $k$ parts of the morphisms vanish.
\end{remark}

One could consider the category of unital cdgas, $\mathscr{A}$, as the category of pairs $(A,\epsilon )$, where $A$ is a unital cdga and $\epsilon\colon A\to k$ is a retraction. This is not necessary as the following result shows.

\begin{proposition}
Given a unital cdga $A$, the pseudo-compact curved Lie algebra $(\hat{L}\Sigma A^*_+ ,d^*_+ + m^*_+,d^*_k + m^*_k)$ depends on the choice of linear retraction $\epsilon\colon A\to k$ up to isomorphism.
\end{proposition}
\begin{proof}
A different choice of retraction is given by $\epsilon'(b)=\epsilon(b)+x(b)$, where $x\in A^*_+$ has degree $0$. This leads to the isomorphism of pseudo-compact curved Lie algebras $(\id,x):\hat{L} (\Sigma A^*_+)\to\hat{L} (\Sigma A^*_+)$ (or a twisting), since $x$ will have degree minus one in $\hat{L} (\Sigma A^*_+)$.
\end{proof}

Let $A,B\in\mathscr{A}$, and $A_+$ and $B_+$ be the kernels of a pair of linear retractions on $A$ and $B$, respectively. Given a morphism $f\colon A\to B$ belonging to $\mathscr{A}$ the linear morphism $f\colon A_+\to B$ can be split as $f=(f_+,f_k)$ where
\[
f_+\colon A_+\to B_+,\;\mathrm{and}
\]
\[
f_k\colon A_+\to k.
\]
Clearly the dual morphism $f^*_+$ can be extended to a graded Lie algebra morphism $f^*_+\colon \hatL (B)\to\hatL (A)$, denoted the same by an abuse of notation. Additionally, notice that it is possible to consider the dual morphism $f^*_k$ as a degree $-1$ element of $\hatL (A)$. With these observations in mind the following proposition is made.

\begin{proposition}\label{def_harrison_2}
Given a morphism, $f\colon A\to B$ of $\mathscr{A}$, the morphism
\[
(f^*_+,f^*_k)\colon \hatL (B)\to \hatL (A))
\]
constructed above is a well defined curved Lie algebra morphism.
\end{proposition}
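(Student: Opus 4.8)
The plan is to verify the two defining conditions of a curved morphism from the Definition preceding this Proposition, applied to $(f,\alpha)=(f_+^*,f_k^*)\colon\hatL(B)\to\hatL(A)$; that is,
\[ d_{\hatL(A)}\,f_+^*(x)=f_+^*\bigl(d_{\hatL(B)}x\bigr)+[\,f_k^*,f_+^*(x)\,],\qquad \omega_{\hatL(A)}=f_+^*\bigl(\omega_{\hatL(B)}\bigr)+d_{\hatL(A)}f_k^*-\tfrac12[\,f_k^*,f_k^*\,]. \]
First I would record the structure that comes for free. Since $\hat L(-)$ of a generating space is the completed free graded Lie algebra, which is functorial in that space, the dual $f_+^*\colon B_+^*\to A_+^*$ extends uniquely to a genuine graded Lie algebra morphism $f_+^*\colon\hatL(B)\to\hatL(A)$ with no conditions imposed. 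Because $f$ has degree zero, $f_k$ is supported on $(A_+)^0$, so $f_k^*$ is a well-defined element of degree $-1$ in $\hatL(A)$, as required for the $\alpha$-part; and continuity is automatic, as every dual here is assembled from the duals of the finite-dimensional subquotients defining the pseudo-compact structure.

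The substantive input is obtained by unwinding what it means for $f$ to be a morphism of $\mathscr{A}$ and dualizing. Using the splittings $A=A_+\oplus k$, $B=B_+\oplus k$ together with $f(1_A)=1_B$ and $d_B1_B=0$, the identity $d_Bf=fd_A$ restricted to $A_+$ separates into its $B_+$- and $k$-components, giving $d_{+,B}f_+=f_+d_{+,A}$ and $d_{k,B}f_+=f_kd_{+,A}+d_{k,A}$. Likewise multiplicativity $f(ab)=f(a)f(b)$ on $A_+\otimes A_+$ separates into a $B_+$-component $f_+m_{+,A}=m_{+,B}(f_+\otimes f_+)+\mu$, where $\mu(a,b)=f_k(a)f_+(b)+f_k(b)f_+(a)$, and a $k$-component $f_km_{+,A}+m_{k,A}=m_{k,B}(f_+\otimes f_+)+f_k\otimes f_k$. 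These four identities are the concrete content; everything after is formal dualization against the structure of Definition \ref{def_harrison_1}.

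I would then check the two conditions on generators and by Lie-word length, which suffices: the difference of the two sides of the first condition is a twisted ($f_+^*$-)derivation, hence determined by its restriction to the generating space $\Sigma B_+^*$, while $\omega_{\hatL(A)}=d_{k,A}^*+m_{k,A}^*$, the image $f_+^*(\omega_{\hatL(B)})$, $d_{\hatL(A)}f_k^*$, and $[f_k^*,f_k^*]$ are all concentrated in words of length $\le 2$. On a generator the first condition splits into a linear part, which is exactly the dual of $d_{+,B}f_+=f_+d_{+,A}$, and a quadratic part, which is the dual of the $B_+$-component of multiplicativity, with the scalar terms $\mu$ dualizing precisely to $[f_k^*,f_+^*(-)]$ via the antisymmetry of the bracket. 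The second condition splits analogously: its length-one part is the dual of $d_{k,B}f_+=f_kd_{+,A}+d_{k,A}$, and its length-two part is the dual of $f_km_{+,A}+m_{k,A}=m_{k,B}(f_+\otimes f_+)+f_k\otimes f_k$, the symmetric term $f_k\otimes f_k$ dualizing to one half of $[f_k^*,f_k^*]$.

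The hard part will be sign and suspension bookkeeping rather than any conceptual difficulty. Passing from the cohomological grading on $A,B$ to the homological grading on $\Sigma A_+^*,\Sigma B_+^*$, each dualization introduces Koszul signs, and I must check that these conspire to reproduce exactly the signs of the Definition of a curved morphism: in particular the sign of $d_{+,A}^*f_k^*$ in the second condition, and the coefficient $-\tfrac12$ of $[f_k^*,f_k^*]$ coming from the symmetric map $f_k\otimes f_k$ under the identification of the quadratic part of the free Lie algebra with the antisymmetrization of $(\Sigma A_+^*)^{\otimes2}$. Once the conventions fixed after Definition \ref{def_harrison_1} are applied consistently, these signs and the factor $\tfrac12$ match, and both conditions hold, so $(f_+^*,f_k^*)$ is a well-defined curved morphism.
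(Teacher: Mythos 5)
Your proposal is correct and matches the paper's approach: the paper's entire proof is the one-line remark that it ``amounts to chasing the definitions,'' and your outline---splitting the identities $d_Bf=fd_A$ and $f(ab)=f(a)f(b)$ into their $B_+$- and $k$-components, dualizing, and checking the two curved-morphism conditions on generators via the derivation/word-length reduction---is precisely that definition-chase, carried out in more detail than the paper records. The only step you leave unverified, the sign and suspension bookkeeping, is exactly the part the paper also suppresses.
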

\begin{proof}
The proof amounts to chasing the definitions.
\end{proof}

\begin{remark}
As with objects, the morphism $\mathcal{L}(f)$ depends on the retractions chosen for the cdgas $A$ and $B$. If either retraction is changed, however, then the obtained morphism $\mathcal{L}(f)$ is changed by a pre- and/or post-composition with an isomorphism.
\end{remark}

\begin{definition}\label{def_harrison}
Let $\hatL \colon  \mathscr{A}\to\curvedLie$ be the contravariant functor that sends a cdga $A$ to $\hatL (A)$ as in Definition \ref{def_harrison_1} and sends a morphism $f$ to $(f^*_+,f^*_k)$ as in Proposition \ref{def_harrison_2}.
\end{definition}

It will be shown that the functor $\hatL$ provides one half of an adjoint pair and so to complete the pair of functors it is necessary to describe a contravariant functor going in the reverse direction.

\begin{definition}\label{def_CE}
The contravariant functor $\C \colon  \curvedLie \to \mathscr{A}$ is given by the following. The underlying graded space is given by the symmetric algebra of the suspension of the continuous dual of $\mathfrak{g}$, i.e.~$S\Sigma\mathfrak{g}^*$. This becomes a unital cdga with the concatenation product and the differential made of three parts coming from the duals of the curvature, the differential and the bracket of $\mathfrak{g}$ made into derivations and extended via the Leibniz rule, cf.~\cite{chuang_laz_mannan}.

Given a morphism $(f,\alpha)\colon \mathfrak{g}\to\mathfrak{h}$ in $\curvedLie$ associate to it the morphism 
\[
\C(f,\alpha)\colon \C(\mathfrak{h})\to \C(\mathfrak{g})
\]
given by $\Sigma(f^*\oplus\alpha^*)\colon \Sigma\mathfrak{h}^*\to \Sigma\mathfrak{g}^*$ extended as a morphism of cdgas.
\end{definition}

\begin{remark}
The functor $\C$ is an analogue of the Chevalley-Eilenberg construction in homological algebra.
\end{remark}

In Definition \ref{def_CE}, the uncompleted symmetric algebra is taken and not the completed one; this is because $\mathfrak{g}$ is already complete in some sense.

If $(\mathfrak{g},d,0)$ has zero curvature then the cdga obtained by applying the contravariant functor $\C$ is augmented, cf.~\cite{laz_markl} since it is then precisely the same construction. It can, therefore, be understood that the curvature acts as an obstruction for the cdga $\C(\mathfrak{g})$ to be augmented, because the natural choice for augmentation fails to be a dg algebra morphism. More precisely, the part of the differential coming from the curvature maps into $k$ and not into $\mathfrak{g}$.

Another reason to see why an augmentation often fails to arise in the curved setting is that a MC element for a dgla corresponds to an augmentation of $S\Sigma\mathfrak{g}^*$. In the uncurved case the zero element is always a MC element and there is always an augmentation. In the case of a curved Lie algebra, however, there need not be any solutions to the MC equation.

\begin{proposition}\label{prop_adjoint}
The contravariant functor $\C \colon \curvedLie\to\mathscr{A}$ is right adjoint to the contravariant functor $\hatL \colon \mathscr{A}\to\curvedLie$.
\end{proposition}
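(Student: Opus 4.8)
The plan is to establish the adjunction $\Hom_{\curvedLie}(\hatL(A),\mathfrak{g})\cong\Hom_{\mathscr{A}}(A,\C(\mathfrak{g}))$ by unwinding both sides into the same concrete linear data and checking that the respective structure conditions coincide. First I would fix an object $A\in\mathscr{A}$ with linear retraction $\epsilon$ and kernel $A_+$, and an object $\mathfrak{g}\in\curvedLie$. On the Lie side, a curved morphism $(F,\beta)\colon\hatL(A)=(\hat L\Sigma A_+^*,\,d_+^*+m_+^*,\,d_k^*+m_k^*)\to\mathfrak{g}$ is a graded Lie algebra map $F$ out of the completed free object together with an element $\beta\in\mathfrak{g}$ of degree $-1$. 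Since $\hat L\Sigma A_+^*$ is (completed) free on the generators $\Sigma A_+^*$, the map $F$ is determined freely by its restriction to generators, i.e.\ by a continuous degree-preserving linear map $\Sigma A_+^*\to\mathfrak{g}$, equivalently (dualizing, using $V^{**}\cong V$ and $(\Sigma V)^*\cong\Sigma^{-1}V^*$) by a linear map $A_+\to\Sigma\mathfrak{g}^*$. Bundling this together with the element $\beta$, which I would read off as a map $k\to\Sigma\mathfrak{g}^*$ landing in the appropriate degree, the total data is exactly a degree-zero linear map $A=A_+\oplus k\to S\Sigma\mathfrak{g}^*=\C(\mathfrak{g})$ that sends $1\in A$ to the unit: that is, a unital \emph{graded-algebra} map (the freeness of $S$ on the linear part matches the freeness of $\hat L$ on generators).

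Second, I would match the differential and curvature conditions on the Lie side against the condition that the corresponding algebra map commute with the cdga differential on the $\C(\mathfrak{g})$ side. The curved-morphism axioms are
$$d_\mathfrak{g}F(x)=F(d_{\hatL(A)}x)+[\beta,F(x)],\qquad \omega_\mathfrak{g}=F(\omega_{\hatL(A)})+d_\mathfrak{g}\beta-\tfrac12[\beta,\beta].$$
The differential $d_{\hatL(A)}=d_+^*+m_+^*$ dualizes the (split) differential and multiplication of $A$, and the curvature $\omega_{\hatL(A)}=d_k^*+m_k^*$ dualizes the $k$-components $d_k,m_k$. Meanwhile the Chevalley--Eilenberg differential on $\C(\mathfrak{g})$ is assembled from the duals of $\omega_\mathfrak{g}$, $d_\mathfrak{g}$, and the bracket of $\mathfrak{g}$. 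The key computation is to show, by evaluating on generators and extending by the Leibniz rule on both sides, that the first axiom for $(F,\beta)$ is precisely the statement that the adjunct map commutes with the linear-plus-quadratic parts of the differentials, and that the curvature axiom matches the remaining (constant, $k$-valued) part; the element $\beta$ corresponds exactly to the freedom of a non-augmentation-preserving choice, i.e.\ to where $A_+$ maps into the degree-$(-1)$ generators versus picking up a scalar. This is where the earlier remarks—that curvature obstructs augmentation and that a $\MC$ element corresponds to an augmentation—do the conceptual bookkeeping.

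Third, having matched morphisms as sets, I would verify naturality in both variables. Given $f\colon A\to A'$ in $\mathscr{A}$ and $(g,\gamma)\colon\mathfrak{g}\to\mathfrak{g}'$ in $\curvedLie$, I would check that the bijection intertwines precomposition/postcomposition with $\hatL(f)=(f_+^*,f_k^*)$ and $\C(g,\gamma)=\Sigma(g^*\oplus\gamma^*)$; this reduces to the fact that dualization is functorial and that the generator-level description of each functor is manifestly natural, so both squares commute essentially by construction. The main obstacle I anticipate is the bookkeeping in the second step: correctly tracking the splitting $d=(d_+,d_k)$, $m=(m_+,m_k)$ through the dualization, and confirming that the sign conventions from the suspension isomorphisms $(\Sigma V)^*\cong\Sigma^{-1}V^*$ and from the homological grading make the two axioms line up \emph{exactly} with the two graded pieces (quadratic-plus-linear, and constant) of the compatibility of an algebra map with $d_{\C(\mathfrak{g})}$. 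Once the generator-level identities are pinned down with the right signs, extension by the Leibniz rule and freeness upgrade them to the full bijection, and naturality is then formal.
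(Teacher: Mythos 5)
Your overall strategy---unwind both hom-sets to linear data on the generators of the free objects and match the curved-morphism axioms against compatibility with the Chevalley--Eilenberg differential---is exactly the paper's, but there is a directional error at the crucial dualization step that breaks the argument as written. Dualizing the generator-level map $\Sigma A_+^*\to\mathfrak{g}$ yields a map $\mathfrak{g}^*\to(\Sigma A_+^*)^*\cong\Sigma^{-1}A_+$, i.e.\ $\Sigma\mathfrak{g}^*\to A_+$, \emph{not} a map $A_+\to\Sigma\mathfrak{g}^*$ as you claim. Consequently the correct adjunction isomorphism is
$$\Hom_{\mathscr{A}}(\C(\mathfrak{g}),A)\cong\Hom_{\curvedLie}(\hatL(A),\mathfrak{g}),$$
with the Chevalley--Eilenberg algebra as the \emph{source} of the cdga morphism, which is what the paper proves. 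Your version, $\Hom_{\mathscr{A}}(A,\C(\mathfrak{g}))$, is not the right hom-set, and the justification you give for it does not go through: the freeness of $S\Sigma\mathfrak{g}^*$ provides a universal property for algebra maps \emph{out of} $\C(\mathfrak{g})$ (they are determined by arbitrary linear maps on $\Sigma\mathfrak{g}^*$), whereas a unital graded-algebra map $A\to S\Sigma\mathfrak{g}^*$ must additionally respect the multiplication of $A$---a condition invisible in the linear data you collect, so your claimed bijection with pairs $(F,\beta)$ fails already at the level of underlying graded algebras.

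Once the direction is corrected---linear map $\Sigma\mathfrak{g}^*\to A_+$ from $F$, plus $f_k=\Sigma\beta^*:\Sigma\mathfrak{g}^*\to k$ from the element $\beta$, assembling via $A=A_+\oplus k$ into a linear map $\Sigma\mathfrak{g}^*\to A$ and hence a cdga morphism $\C(\mathfrak{g})\to A$---the remainder of your plan (matching the two curved-morphism axioms against the linear-plus-quadratic and constant pieces of compatibility with $d_{\C(\mathfrak{g})}$, then checking naturality by functoriality of dualization) is sound and coincides with the paper's proof, which likewise leaves the sign-tracking computations as a ``straightforward, although slightly tiresome'' check.
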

\begin{proof}
In order to prove the proposition it is sufficient to exhibit the following isomorphism:
\[
\Hom_\mathscr{A} (\C (\mathfrak{g}),A)\cong \Hom_{\curvedLie} (\hatL (A),\mathfrak{g}),
\]
for any curved Lie algebra $\mathfrak{g}$ and any unital cdga $A$. To this end, assume $f\colon S\Sigma\mathfrak{g}^*\to A$ is a morphism of unital cdgas. This morphism is uniquely determined by the linear morphism $f\colon \Sigma\mathfrak{g}^*\to A$, which in turn defines $f_+$ and $f_k$ since $A=A_+\oplus k$. Dualising, the linear morphisms
\begin{align*}
f^*_+ &\colon \Sigma A^*_+\to\mathfrak{g},\; \mathrm{and}\\
f^*_k &\colon \Sigma k \to \mathfrak{g}
\end{align*}
are obtained.

By extending $f^*_+\colon \Sigma A_+\to\mathfrak{g}$ as a graded Lie algebra morphism to $\hat{L}\Sigma A_+$ and combining it with $f^*_k$ the curved Lie algebra morphism $(f^*_+,f^*_k)\colon \hatL(A)\to\mathfrak{g}$ is obtained. It is straightforward, although slightly tiresome, to do the calculations. Hence, one side of the adjunction is proven. Now, assume that $(f,\alpha )\colon \hatL (A)\to \mathfrak{g}$ is a curved Lie algebra morphism. The graded Lie algebra morphism, $f$, is uniquely determined by the underlying linear morphism $f\colon \Sigma A^*_+\to\mathfrak{g}$, which induces $f_+\colon \Sigma\mathfrak{g}^*\to A_+$: this gives one component. The second comes from first considering $\alpha$ as the morphism $\alpha \colon \Sigma^{-1} k\to\mathfrak{g}$. By dualising this morphism and taking the suspension one obtains $f_k=\Sigma\alpha^*\colon \Sigma\mathfrak{g}\to k$. This construction yields a linear morphism $(f^*_+,f^*_k)\colon \Sigma\mathfrak{g}\to A$ which can be extended to morphism of commutative algebras $\C (\mathfrak{g})\to A$. The final morphism also commutes with the differentials, which is a quick check. Hence the other side of the adjunction has been proven.
\end{proof}

\section{Model category of curved Lie algebras}\label{sec_modelCat_main}

Here it will be demonstrated that the category $\curvedLie$ can be endowed with the structure of a model category with weak equivalences given by filtered quasi-isomorphisms, cf.~Definition \ref{def_qiso}. In addition, this model structure is Quillen equivalent to the model structure for unital cdga given in \cite{hinich_homo_alg}. This equivalence will be shown using similar methods to \cite{laz_markl}. More precisely, the proof of the equivalence will employ the Quillen equivalence that exists upon associative dg local algebras and pseudo-compact curved associative algebras (see \cite{positselski}), as well as the primitive elements and universal enveloping algebra functors. First, though, it must be proven that $\curvedLie$ possesses all small limits and colimits.

\subsection{Limits and colimits}

There does not exist an initial object in the category of pseudo-compact curved Lie algebras with curved morphisms. The closest object to an initial object is the curved Lie algebra freely generated by a single element (the curvature) of degree $-2$ with zero differential, i.e.~$(\hat{L}\langle\omega \rangle,0,\omega)$. There clearly will be a morphism from this object to every other object. Such a morphism, however, is not necessarily unique. Nevertheless, in the category of curved Lie algebras with strict morphisms this object is the initial object. Thus, it is necessary to formally add an initial object to the category $\curvedLie$. From here on let $\curvedLie_*$ denote the category of pseudo-compact curved Lie algebras and curved morphisms with a formal initial object added. The category $\curvedLie_*$ does possess a terminal object, namely the zero curved Lie algebra $(0,0,0)$.

\begin{proposition}
Here the product over a finite set will be described; the general case follows in a straightforward fashion from this description. The product over the set $I=\lbrace i_1, i_2,\dots i_n \rbrace$ indexing pseudo-compact curved Lie algebras, $(\mathfrak{g}_{i_j},d_{i_j},\omega_{i_j})$ for $1\leq j \leq n$, is denoted by $\prod_{i\in I} \mathfrak{g}_i$ and given by the Cartesian product of underlying sets with bracket given by
\[
[(x_{i_1},\ldots ,x_{i_n}),(x'_{i_1},\ldots ,x'_{i_n})]=([x_{i_1},x'_{i_1}]_{i_1},\ldots ,[x_{i_n},x'_{i_n}]_{i_n}),
\]
where $[,]_{i_j}$ is the bracket of $\mathfrak{g}_{i_j}$, differential given by
\[d(x_{i_1},\ldots ,x_{i_n})=(d_{i_1}x_{i_1},\ldots ,d_{i_n}x_{i_n}),
\]
and curvature given by
\[
(\omega_{i_1},\ldots ,\omega_{i_n}).
\]
The projection morphisms are the obvious ones onto each factor, i.e.~$\pi_{i_j}\colon \prod_{i\in I} \mathfrak{g}_i \to\mathfrak{g}_{i_j}$.
\end{proposition}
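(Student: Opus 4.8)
The plan is to verify two things: first, that the object described is genuinely an object of $\curvedLie_*$, i.e.\ a pseudo-compact curved Lie algebra; and second, that it satisfies the universal property of the product \emph{with respect to the curved morphisms} of $\curvedLie_*$. The only real novelty over the classical product of Lie algebras lies in tracking the curvature elements and the $\alpha$-part of curved morphisms; everything else is componentwise and essentially formal.

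First I would check that $\prod_{i\in I}\mathfrak{g}_i$, with the stated bracket, differential, and curvature $\omega=(\omega_{i_1},\ldots,\omega_{i_n})$, is a curved Lie algebra. Antisymmetry, the Jacobi identity, and the Leibniz rule for $d$ all hold because the operations are defined componentwise and hold in each $\mathfrak{g}_{i_j}$. The two axioms of Definition \ref{def_curvedLie} are then immediate: for $x=(x_{i_1},\ldots,x_{i_n})$ one computes $d^2 x=(d_{i_1}^2 x_{i_1},\ldots,d_{i_n}^2 x_{i_n})=([\omega_{i_1},x_{i_1}]_{i_1},\ldots,[\omega_{i_n},x_{i_n}]_{i_n})=[\omega,x]$, and $d\omega=(d_{i_1}\omega_{i_1},\ldots,d_{i_n}\omega_{i_n})=0$. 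To see the product is pseudo-compact, I would write each factor as $\mathfrak{g}_{i_j}=\varprojlim_n \mathfrak{g}_{i_j}^n$ with $\mathfrak{g}_{i_j}^n$ finite dimensional and nilpotent; since finite products commute with inverse limits, and a finite product of finite dimensional nilpotent curved Lie algebras is again finite dimensional and nilpotent (its lower central series being the product of the lower central series), the finite product is realised as an inverse limit of such objects and hence lies in $\curvedLie$. The general (possibly infinite) case follows by the same argument, expressing an arbitrary product as an inverse limit over its finite sub-products.

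Next I would verify that each projection $\pi_{i_j}=(p_{i_j},0)$ is a strict curved morphism, which is immediate from the componentwise definitions since $p_{i_j}$ commutes with the differentials and sends $\omega$ to $\omega_{i_j}$. The heart of the argument is the universal property. Given any $\mathfrak{k}\in\curvedLie_*$ together with curved morphisms $(h_{i_j},\gamma_{i_j}):\mathfrak{k}\to\mathfrak{g}_{i_j}$, I would exhibit the candidate $(h,\gamma)$ with $h=(h_{i_1},\ldots,h_{i_n})$ the product of the underlying graded Lie algebra morphisms and $\gamma=(\gamma_{i_1},\ldots,\gamma_{i_n})$, a degree $-1$ element of the product. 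Using the composition convention $(f,\alpha)\circ(g,\beta)=(f\circ g,\alpha+f(\beta))$ and the strictness of the projections, one finds $\pi_{i_j}\circ(h,\gamma)=(p_{i_j}\circ h,\,p_{i_j}(\gamma))=(h_{i_j},\gamma_{i_j})$, and this same computation shows that the equations $p_{i_j}h=h_{i_j}$ and $p_{i_j}\gamma=\gamma_{i_j}$ pin down $(h,\gamma)$ uniquely. That $(h,\gamma)$ is actually a curved morphism then holds because both defining conditions of a curved morphism are componentwise identities, satisfied precisely because each $(h_{i_j},\gamma_{i_j})$ is a curved morphism. The case $\mathfrak{k}=*$, the formal initial object, is handled separately but trivially, there being a unique morphism out of $*$ to any object.

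The step I expect to require the most care is the bookkeeping of the $\alpha$-data under the composition convention for curved morphisms. Because the projections are strict, composing a prospective mediating morphism $(h,\gamma)$ with $\pi_{i_j}$ returns $p_{i_j}(\gamma)$ rather than a twisted expression, which is exactly what allows the curvature data $\gamma_{i_j}$ to assemble into a single $\gamma$; checking that this assembled $\gamma$ still satisfies the curvature-compatibility equation $\omega=h(\omega_\mathfrak{k})+d\gamma-\tfrac{1}{2}[\gamma,\gamma]$ componentwise is the one place where the curved (as opposed to strict) structure genuinely enters, and it is where I would be most careful to confirm that no cross-terms between distinct factors appear.
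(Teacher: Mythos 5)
Your verification is correct and is exactly the routine componentwise argument the paper is implicitly relying on: the paper states this proposition without any proof, and your checks of the curved Lie algebra axioms, pseudo-compactness via inverse limits of finite sub-products, and the universal property---including the key observation that strictness of the projections makes the $\alpha$-bookkeeping collapse componentwise with no cross-terms---fill in precisely what was deemed obvious. The only point you leave tacit is continuity of the mediating morphism $(h,\gamma)$, which is immediate since a map into a product (with the product topology) is continuous exactly when each composite with a projection is.
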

\begin{proof}
A straightforward check.
\end{proof}

\begin{proposition}
The equaliser of two curved morphisms $(f,\alpha),(g,\beta)\colon \mathfrak{g}\to\mathfrak{h}$ is given by the initial object if $\alpha\neq\beta$ and by $\lbrace x\in\mathfrak{g} \colon f(x)=g(x)\rbrace$ if $\alpha = \beta$.
\end{proposition}
\begin{proof}
If $\alpha\neq\beta$ then $0$ is not in the equaliser, because $f(0)-\alpha\neq g(0) -\beta$. Therefore, the initial object is the only object satisfying the conditions of the equaliser.

If $\alpha = \beta$, then it is a straightforward exercise to show that the space $\lbrace x\in\mathfrak{g} \colon f(x)=g(x)\rbrace$ respects the differential and bracket inherited from $\mathfrak{g}$.
\end{proof}

\begin{proposition}\label{prop_coprod}
The coproduct in the category of pseudo-compact curved Lie algebras is easiest to describe in the binary case: given two pseudo-compact curved Lie algebras, $(\mathfrak{g},d_\mathfrak{g},\omega_\mathfrak{g})$ and $(\mathfrak{h},d_\mathfrak{h},\omega_\mathfrak{h})$, the coproduct $\mathfrak{g}\coprod\mathfrak{h}$ has underlying graded Lie algebra $\mathfrak{g}\ast \mathfrak{h} \ast \hat{L} \langle z \rangle$, where $\ast$ is the free product and $z$ is a formal element of degree minus one. The differential is given by the rules: $d|_\mathfrak{g} = d_{\mathfrak{g}}$, $d|_\mathfrak{h} = d_\mathfrak{h} - ad_z$ and $dz=\omega_\mathfrak{g} - \omega_\mathfrak{h} + \frac{1}{2}[x,x]$. The resulting space has curvature equal to that of $\mathfrak{g}$. The two inclusion morphisms are given by
\[
(id_\mathfrak{g},0)\colon \mathfrak{g} \hookrightarrow \mathfrak{g} \coprod \mathfrak{h}
\]
and
\[
(id_\mathfrak{h},z)\colon \mathfrak{h} \hookrightarrow \mathfrak{g} \coprod \mathfrak{h}.
\]
\end{proposition}
\begin{proof}
It is straightforward to see that $\mathfrak{g}\coprod \mathfrak{h}$ is a well defined pseudo-compact curved Lie algebra. To demonstrate the required universal property, take a pseudo-compact curved Lie algebra $\mathfrak{x}$ and morphisms
\[
(f_\mathfrak{g},\alpha)\colon \mathfrak{g} \to \mathfrak{x}
\] and 
\[
(f_\mathfrak{h},\beta)\colon \mathfrak{h} \to \mathfrak{x}.
\]
Define a morphism $(f,\alpha): \mathfrak{g} \coprod \mathfrak{h} \to \mathfrak{x}$ by $f|_\mathfrak{g}=f_\mathfrak{g}$, $f|_\mathfrak{h}=f_\mathfrak{h}$, and $f(z)=\beta -\alpha$. Again it is straightforward to show that $(f,\alpha )$ is a well defined morphism and the diagram
\begin{center}
\begin{tikzpicture}
\matrix (m) [matrix of math nodes, column sep=5em, row sep=4em]{
 & \mathfrak{x} & \\
\mathfrak{g} & \mathfrak{g}\coprod\mathfrak{h} & \mathfrak{h} \\};
\path[->]
(m-2-1) edge node[above,xshift=-0.5cm] {$(f_\mathfrak{g},\alpha)$} (m-1-2)
(m-2-2) edge node[left,yshift=-0.3cm] {$(f,\alpha )$} (m-1-2)
(m-2-3) edge node[above,xshift=0.5cm] {$(f_\mathfrak{h},\beta )$} (m-1-2)
(m-2-1) edge (m-2-2)
(m-2-3) edge (m-2-2);
\end{tikzpicture}
\end{center}
commutes. Uniqueness of this construction is a quick check.
\end{proof}

\begin{remark}\label{remark_not_exact}
Fix $\mathfrak{h}\in\curvedLie_*$. Given $\mathfrak{g}\in\curvedLie_*$, the functor assigning $\mathfrak{g}\mapsto\mathfrak{g}\coprod\mathfrak{h}$ is not exact. This is easily seen as the terminal object is not preserved.
\end{remark}

\begin{remark}
Since $\coprod$ is a coproduct in the category $\curvedLie_*$ there exists a curved isomorphism
\[
\mathfrak{g}\coprod\mathfrak{h}\cong\mathfrak{h}\coprod\mathfrak{g}.
\]
The isomorphism is strictly curved as the two resulting curved Lie algebras are related by a twist. Explicitly the morphism is given by $(f,z)$, where $f$ is the identity on $\mathfrak{g}$ and $\mathfrak{h}$, and maps $z$ to $-z$. The inverse morphism has the identical action.
\end{remark}

The (categorical) coproduct of Proposition \ref{prop_coprod} is similar to the (non-categorical) disjoint product of \cite{laz_markl}. Informally, the coproduct can be thought of as taking the disjoint union of the two spaces, formally adding a MC base point (i.e.~a solution the Maurer-Cartan equation) and then twisting the copy of $\mathfrak{h}$ with this base point to flatten its curvature.

\begin{proposition}
The coequaliser of two curved morphisms $(f,\alpha), (g,\beta)\colon \mathfrak{g}\to\mathfrak{h}$ is the quotient of $\mathfrak{h}$ by the ideal generated by $f(x)-g(x)$ and $\alpha -\beta$, for all $x\in\mathfrak{g}$.
\end{proposition}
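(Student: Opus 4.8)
The plan is to verify that the proposed quotient, equipped with the \emph{strict} projection, satisfies the universal property of the coequaliser in $\curvedLie_*$. Write $S=\{f(x)-g(x):x\in\mathfrak{g}\}\cup\{\alpha-\beta\}$ for the generating set, let $J$ be the (closure of the) Lie ideal of $\mathfrak{h}$ generated by $S$, and set $I=J+d_\mathfrak{h}(J)$; I claim $I$ is the smallest closed differential Lie ideal containing $S$. That it is a Lie ideal follows from the graded Leibniz rule: for $y\in J$ and $z\in\mathfrak{h}$ one has $[d_\mathfrak{h}y,z]=d_\mathfrak{h}[y,z]-(-1)^{|y|}[y,d_\mathfrak{h}z]$, and both terms on the right lie in $d_\mathfrak{h}(J)+J=I$. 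That it is closed under the differential is the first point where the curved structure intervenes: $d_\mathfrak{h}(I)=d_\mathfrak{h}(J)+d_\mathfrak{h}^2(J)$, and since $d_\mathfrak{h}^2=[\omega_\mathfrak{h},-]$ by Definition \ref{def_curvedLie} while $J$ is a Lie ideal, $d_\mathfrak{h}^2(J)=[\omega_\mathfrak{h},J]\subseteq J\subseteq I$.

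Next I would check that $\mathfrak{h}/I$ carries a pseudo-compact curved Lie algebra structure. The bracket descends because $I$ is a Lie ideal and the differential descends because $I$ is differential; the curvature is declared to be $\pi(\omega_\mathfrak{h})$, where $\pi:\mathfrak{h}\to\mathfrak{h}/I$ is the quotient map. The two axioms of Definition \ref{def_curvedLie} then pass to the quotient directly, namely $\overline{d}^{\,2}(\overline{y})=\overline{[\omega_\mathfrak{h},y]}=[\pi(\omega_\mathfrak{h}),\overline{y}]$ and $\overline{d}\,\pi(\omega_\mathfrak{h})=\pi(d_\mathfrak{h}\omega_\mathfrak{h})=0$. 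Pseudo-compactness is inherited because $I$ is closed, so the quotient remains an inverse limit of finite-dimensional nilpotent curved Lie algebras. With this structure $q:=(\pi,0)$ is a strict curved morphism, and it coequalises the pair: by the composition law $q\circ(f,\alpha)=(\pi f,\pi(\alpha))$ and $q\circ(g,\beta)=(\pi g,\pi(\beta))$ agree precisely because $f(x)-g(x)$ and $\alpha-\beta$ lie in $I=\ker\pi$.

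For the universal property, suppose $(p,\gamma):\mathfrak{h}\to X$ satisfies $(p,\gamma)\circ(f,\alpha)=(p,\gamma)\circ(g,\beta)$. Unwinding the composition law, this says exactly that $p$ kills every $f(x)-g(x)$ and that $p(\alpha)=p(\beta)$, i.e. $p$ vanishes on $S$. Being a continuous graded Lie algebra morphism, $p$ then vanishes on $J$. Here the curved structure is used a second time, and this is the step I expect to require the most care: to factor through the \emph{differential} ideal $I$ I must show $p$ also annihilates $d_\mathfrak{h}(J)$, which is not forced by $p$ being a mere Lie morphism. But for $y\in J$ the curved-morphism relation gives $p(d_\mathfrak{h}y)=d_Xp(y)-[\gamma,p(y)]=0$ since $p(y)=0$; hence $p(I)=0$ and $p$ factors as $p=\overline{p}\circ\pi$ through a unique graded Lie algebra morphism $\overline{p}:\mathfrak{h}/I\to X$.

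Finally I would verify that $(\overline{p},\gamma)$ is the required factorisation and that it is unique. The composition law forces any solution $(\overline{p},\overline{\gamma})$ of $(\overline{p},\overline{\gamma})\circ(\pi,0)=(p,\gamma)$ to satisfy $\overline{\gamma}=\gamma$ and $\overline{p}\circ\pi=p$, pinning down both components; and $(\overline{p},\gamma)$ is genuinely a curved morphism because its defining identities reduce, under $\overline{p}\circ\pi=p$ and $\pi(\omega_\mathfrak{h})=\omega_{\mathfrak{h}/I}$, to the two identities already known for $(p,\gamma)$. This establishes the universal property and completes the proof. The only genuinely delicate points are the two invocations of the curved relation $d^2=[\omega,-]$---once to see that $I$ is a differential ideal, once to see that a coequalising morphism automatically annihilates $d_\mathfrak{h}(J)$---together with the routine but necessary check that closedness of $I$ keeps the quotient inside $\curvedLie_*$.
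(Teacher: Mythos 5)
Your proof is correct; the paper offers no argument here (the proposition is stated with an empty proof, deemed a routine check), and your verification supplies exactly the direct universal-property argument that is intended. You have also correctly isolated the only two non-obvious points---that $d_\mathfrak{h}^2=[\omega_\mathfrak{h},-]$ makes $J+d_\mathfrak{h}(J)$ a differential Lie ideal, and that the relation $p(d_\mathfrak{h}y)=d_Xp(y)-[\gamma,p(y)]$ forces any coequalising morphism to annihilate $d_\mathfrak{h}(J)$ as well---so nothing is missing.
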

\begin{proof}
A painless check.
\end{proof}

\begin{proposition}
The category $\curvedLie_*$ has all small limits and colimits.
\end{proposition}
\begin{proof}
The category $\curvedLie_*$ has an initial object, a terminal object, all products, all equalisers, all coproducts, and all coequalisers, therefore it has all small limits and small colimits, cf.~\cite[Chapter V]{cat_for_work}.
\end{proof}

\subsection{Duality for associative dg algebras}

It is now necessary to recall the definitions of the cobar constructions in the associative case. These constructions can be found, for example, in \cite{positselski}, where pseudo-compact local associative algebras were studied in the dual setting as conilpotent coassociative coalgebras.

\begin{definition}
A curved associative algebra is a graded algebra with an odd derivation, $d$, called the differential and an element of degree $-2$, $\omega$, called the curvature, such that:
\begin{itemize}
\item $d\circ d =\operatorname{ad}_\omega $;
\item $d(\omega )=0$.
\end{itemize}
\end{definition}

\begin{definition}
The category of associative dg algebras with dg algebra morphisms will be denoted $\mathcal{A}ss$.

A curved associative algebra is said to be pseudo-compact if is isomorphic to an inverse limit of finite dimensional nilpotent curved associative algebras. The category of pseudo-compact local associative curved algebras with continuous curved associative algebra morphisms will be denoted $\widehat{\mathcal{CA}ss}$.
\end{definition}

Note that, just like for $\curvedLie_*$, one has to formally add an initial object to $\widehat{\mathcal{CA}ss}$ for it to possess all limits. The category $\widehat{\mathcal{CA}ss}$ with an initial object formally added will be denoted $\widehat{\mathcal{CA}ss}_*$.

The following is a result of Hinich and Jardine, cf.~\cite{hinich_homo_alg,jardine}.

\begin{theorem}
The category $\mathcal{A}ss$ possesses a model structure with the class of weak equivalences given by quasi-isomorphisms. \qed
\end{theorem}

The following is a result of Positselski, cf.~\cite[Section 9]{positselski}.

\begin{theorem}
The category $\widehat{\mathcal{CA}ss}_*$ possesses a model structure, where the weak equivalences are given by the minimal class of morphisms containing all of the filtered quasi-isomorphisms and satisfying the two out of three property. \qed
\end{theorem}

\begin{definition}
Let $\hat{\mathcal{B}}\colon \mathcal{A}ss\to\widehat{\mathcal{CA}ss}_*$ be the contravariant functor assigning to an associative dg algebra the pseudo-compact associative curved algebra $\hat{\mathcal{B}}(A)$ whose underlying graded algebra is $\hat{\operatorname{T}}\Sigma A^*_+$, where $A_+$ is the kernel of a linear retraction $A\to k$. The differential is induced from the multiplication and differential in the same way as Definition \ref{def_harrison_1}.
\end{definition}

Just as with the functor $\hatL$ given in Section \ref{sec_adjoint}, the resulting pseudo-compact associative curved algebra under the functor $\hat{\mathcal{B}}$ has zero curvature if, and only if, the linear retraction is a true augmentation, i.e.~a dg algebra morphism.

\begin{definition}
Let $\mathcal{B}\colon \widehat{\mathcal{CA}ss}_* \to\mathcal{A}ss$ be the contravariant functor assigning to a pseudo-compact associative curved algebra the associative dg algebra $\mathcal{B}(A)$ whose underlying graded algebra is $\operatorname{T}\Sigma A^*_+$, where $A_+$ is again the kernel of a linear retraction $A\to k$. The differential is induced in the same way as in Definition \ref{def_harrison}.
\end{definition}

\begin{remark}
It is important to notice that given an associative algebra, $A$, the pseudo-compact associative algebra $\hat{\mathcal{B}}(A)$ is in fact a Hopf algebra, since it is a tensor algebra. The space of algebra generators in $\hat{\mathcal{B}}(A)$ contains only primitive elements. Moreover, if the multiplication of the associative algebra $A$ is commutative, then the differential of $\hat{\mathcal{B}}(A)$ maps the space of algebra generators to primitives. Thus, for a cdga $A$, $\hat{\mathcal{B}}(A)$ is a dg Hopf algebra.
\end{remark}

Much like in \cite{laz_markl}, the reason for recalling the definitions of the associative case is that the contravariant functors $\C$ and $\hatL$ can be `embedded' into the following adjunction proven by Positselski \cite{positselski}.

\begin{theorem}
The contravariant functors $\mathcal{B}$ and $\hat{\mathcal{B}}$ are adjoint. Moreover, they induce a Quillen equivalence.
\end{theorem}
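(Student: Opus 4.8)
The plan is to establish the adjunction $\mathcal{B}\dashv\hat{\mathcal{B}}$ by exhibiting a natural isomorphism
$$\Hom_{\mathcal{A}ss}(\mathcal{B}(C),A)\cong\Hom_{\widehat{\mathcal{A}ss}}(\hat{\mathcal{B}}(A),C),$$
for $A\in\mathcal{A}ss$ and $C\in\widehat{\mathcal{A}ss}$, in exact parallel with Proposition \ref{prop_adjoint}. A dg algebra morphism out of the free (tensor) algebra $\mathcal{B}(C)=T\Sigma C^*_+$ is determined uniquely by its restriction to generators, i.e.\ by a linear map $\Sigma C^*_+\to A$; splitting $A=A_+\oplus k$ and dualising yields exactly the generator-level data $\Sigma A_+\to C$ together with a degree $-1$ element of $C$ (the curvature-twisting datum), which is precisely what assembles into a curved morphism $\hat{\mathcal{B}}(A)=\hat{T}\Sigma A^*_+\to C$. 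The verification that the correspondence is compatible with the differentials---that the differential on $\mathcal{B}(C)$ built from the multiplication and differential of $C$ matches, under the dual, the curved-morphism conditions---is the associative analogue of the calculation sketched in the proof of Proposition \ref{prop_adjoint}, and is routine though tedious. Naturality in both variables follows by inspecting the construction on morphisms.

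For the second assertion, the plan is to show the adjoint pair is a Quillen adjunction and then upgrade it to a Quillen equivalence. To see it is a Quillen adjunction I would check that $\mathcal{B}$ preserves cofibrations and acyclic cofibrations (equivalently that $\hat{\mathcal{B}}$ preserves fibrations and acyclic fibrations), using the explicit description of the model structures recalled above: the weak equivalences on $\mathcal{A}ss$ are the quasi-isomorphisms, while those on $\widehat{\mathcal{A}ss}$ are generated by filtered quasi-isomorphisms under two-out-of-three. Here the completeness of the filtrations is essential, since it lets one reduce statements about $\hat{T}\Sigma A^*_+$ to statements about the associated graded pieces, where the bar/cobar differential becomes the dual of the multiplication and one can run a spectral-sequence or filtered-acyclicity argument.

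To promote the Quillen adjunction to a Quillen equivalence, the key is to prove that the unit $A\to\mathcal{B}\hat{\mathcal{B}}(A)$ is a quasi-isomorphism for every $A$ (equivalently that the derived counit is an isomorphism). This is the classical bar-cobar acyclicity statement in the curved pseudo-compact setting: the composite $\mathcal{B}\hat{\mathcal{B}}$ is the two-sided bar-cobar resolution, and its acyclicity is proven by filtering by tensor-word length (or bar degree) and exhibiting an explicit contracting homotopy on the associated graded, the standard ``extra degeneracy'' homotopy of the bar construction. Because the objects of $\widehat{\mathcal{A}ss}$ are pronilpotent and hence complete with respect to these filtrations, the filtered quasi-isomorphism on associated graded objects lifts to a genuine weak equivalence, and completeness guarantees convergence of the relevant filtration argument.

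The hard part will be the interplay between curvature and completeness in the equivalence step rather than the formal adjunction, which is essentially bookkeeping. Concretely, the main obstacle is controlling the curvature term that obstructs $\hat{\mathcal{B}}(A)$ from being augmented (precisely as in the remark following Definition \ref{def_CE}): one cannot naively take homology because the curved objects are not complexes, so the acyclicity of bar-cobar must be phrased and proven at the level of the admissible filtrations, verifying both that the filtrations are respected and that passing to associated graded kills the curvature so that an honest contracting homotopy exists. This is exactly where the results of Positselski \cite{positselski} are invoked, and the role of this theorem in the paper is to serve as the black box into which the Lie-theoretic functors $\C$ and $\hatL$ will later be embedded.
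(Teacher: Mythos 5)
Your outline is mathematically sound, but it is considerably more elaborate than what the paper actually does: the paper's entire proof is the observation that pseudo-compact local curved associative algebras are (topologically) dual to conilpotent curved coassociative coalgebras, so that both the adjunction and the Quillen equivalence are exactly the algebra/coalgebra duality already established in \cite{positselski}, quoted wholesale. What you have written is, in effect, a sketch of Positselski's own proof translated into the pseudo-compact-algebra language: the hom-set isomorphism determined on generators of the free tensor algebra, the reduction to associated graded pieces along the word-length filtration, and the contracting homotopy on the bar--cobar composite. Your route buys self-containedness and makes visible where completeness and the vanishing of curvature on associated graded objects enter; the paper's route buys brevity and avoids re-verifying delicate filtration arguments that are carried out carefully op.\ cit. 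Two small points to tighten if you pursue your version. First, both $\mathcal{B}$ and $\hat{\mathcal{B}}$ are contravariant, so the Quillen condition is not that $\mathcal{B}$ \emph{preserves} (acyclic) cofibrations but that the functors \emph{interchange} the distinguished classes, sending cofibrations to fibrations and vice versa --- compare Lemma \ref{lemma_swaps_cofibs_and_fibs} for the Lie-algebra analogue. Second, checking that $A\to\mathcal{B}\hat{\mathcal{B}}(A)$ is a quasi-isomorphism is only half of the equivalence statement; one must also control the other composite $\hat{\mathcal{B}}\mathcal{B}(C)\to C$ (or argue that one of the functors detects weak equivalences), and this is precisely where the definition of the weak equivalences of $\widehat{\mathcal{A}ss}$ as the class generated by filtered quasi-isomorphisms under two-out-of-three is needed.
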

\begin{proof}
Pseudo-compact local associative curved algebras are dual to conilpotent coassociative curved coalgebras and thus it follows from \cite[Section 6]{positselski}.
\end{proof}

Let $U\colon \curvedLie_*\to\widehat{\mathcal{CA}ss}_*$ be the universal enveloping algebra functor, given analogously to the classical construction. Let $\operatorname{Prim}\colon \widehat{\mathcal{CA}ss}_*\to\curvedLie_*$ be the primitive elements functor, given analogously to the classical construction.

The contravariant functors $\hatL$ and $\C$ can be seen to fit into the following `commutative diagram' of functors:
\begin{center}
\begin{tikzpicture}
\matrix (m) [matrix of nodes, column sep=2.5em, row sep=4em]{
$\mathcal{A}ss$ & $\widehat{\mathcal{A}ss}_*$ \\
$\mathscr{A}$ & $\curvedLie_*$, \\
};
\path[->]
(m-1-1) edge[transform canvas={yshift=2}] node[above] {$\hat{\mathcal{B}}$} (m-1-2)
(m-1-2) edge[transform canvas={yshift=-2}] node[below] {$\mathcal{B}$} (m-1-1)
(m-1-2) edge[transform canvas={xshift=2}] node[right] {$\operatorname{Prim}$} (m-2-2)
(m-2-2) edge[transform canvas={xshift=-2}] node[left] {$U$} (m-1-2)
(m-2-1) edge[right hook->] node[left] {forgetful} (m-1-1)
(m-2-1) edge[transform canvas={yshift=2}] node[auto] {$\hatL$} (m-2-2)
(m-2-2) edge[transform canvas={yshift=-2}] node[auto] {$\C$} (m-2-1);
\end{tikzpicture}
\end{center}
The following two propositions show the necessary `commutativity' of this diagram for the purposes of this paper, and the proofs follow in a straightforward manner from \cite[Proposition 9.5]{laz_markl}.

\begin{proposition}
Given a unital cdga, $A$, there is a natural isomorphism of pseudo-compact curved Lie algebras $\operatorname{Prim}(\hat{\mathcal{B}}(A))\cong\hatL(A)$. \qed
\end{proposition}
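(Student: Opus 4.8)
The plan is to reduce the statement to the classical identification of the primitive elements of a tensor algebra with a free Lie algebra, and then to check that the differential and curvature match up. By definition the underlying graded algebra of $\hat{\mathcal{B}}(A)$ is the completed tensor algebra $\hat{T}\Sigma A^*_+$. Over a field of characteristic zero this is the completed universal enveloping algebra of the completed free Lie algebra $\hat{L}\Sigma A^*_+$, which is exactly the underlying graded Lie algebra of $\hatL(A)$; equivalently, passing through the right-hand square of the functor diagram, $\hat{T}\Sigma A^*_+$ is the underlying algebra of $U(\hatL(A))$. First I would invoke the Friedrichs/Milnor--Moore theorem (valid in characteristic zero), which says that the primitives of an enveloping algebra $U(\mathfrak{g})$ are precisely $\mathfrak{g}$; in the pseudo-compact setting this is seen by dualising to the conilpotent coassociative coalgebra $(\hat{T}\Sigma A^*_+)^*$, where the statement that the primitives of the cofree conilpotent coalgebra form the free Lie (co)algebra is standard. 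This yields an isomorphism $\operatorname{Prim}(\hat{\mathcal{B}}(A))\cong\hat{L}\Sigma A^*_+$ of pseudo-compact graded Lie algebras.

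Next I would upgrade this to an isomorphism of curved Lie algebras by tracking the differential and curvature. Both sides carry structure induced by dualising the differential $d=(d_+,d_k)$ and the multiplication $m=(m_+,m_k)$ of $A$ and extending by the (co)derivation/Leibniz rule, exactly as in Definition \ref{def_harrison_1} and the definition of $\hat{\mathcal{B}}$. The key observation is that the induced differential on $\hat{\mathcal{B}}(A)$ is a continuous coderivation of the associated conilpotent coalgebra, and a coderivation preserves the subspace of primitive elements; hence it restricts to $\operatorname{Prim}(\hat{\mathcal{B}}(A))$. By construction this restriction is the derivation extending $d^*_+ + m^*_+$, and the degree $-2$ contribution coming from the $k$-valued parts $d_k$ and $m_k$ is a primitive element, namely the curvature $d^*_k + m^*_k$ of $\hatL(A)$. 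Thus the graded Lie isomorphism above intertwines the differentials and sends curvature to curvature, giving the desired isomorphism in $\curvedLie_*$.

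The hard part, such as it is, lies in the curved bookkeeping rather than in any deep argument: one must verify that the $k$-components really do land in the curvature slot (degree $-2$, primitive) and not in the differential, and that the whole identification is continuous and compatible with the pseudo-compact topologies. Naturality in $A$ is then routine---given a cdga map $f:A\to B$ one checks that $\operatorname{Prim}(\hat{\mathcal{B}}(f))$ and $\hatL(f)=(f^*_+,f^*_k)$ agree under the isomorphism---since both are obtained by dualising $f=(f_+,f_k)$ and extending freely, so the two functors coincide on underlying data. As the excerpt notes, this is formally the same argument as in \cite{laz_markl}, the only new ingredient being the presence of the curvature, which is handled uniformly by the splitting of $d$ and $m$ into their $A_+$- and $k$-valued parts.
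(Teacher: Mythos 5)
Your argument is correct and is the standard one; the paper gives no proof of this proposition at all, merely asserting that ``the proofs are the same as'' those in Lazarev--Markl, so your write-up supplies exactly the details being deferred (the Friedrichs/Milnor--Moore identification of $\operatorname{Prim}(\hat{T}\Sigma A^*_+)$ with $\hat{L}\Sigma A^*_+$ in characteristic zero, followed by matching the differential and curvature). The one point worth making explicit is where commutativity of $A$ enters: the quadratic parts $m^*_+$ and $m^*_k$ land in primitives only because the suspension converts the symmetry of $m$ into antisymmetry of its dual, which is precisely what places the quadratic component of the differential and of the curvature inside $\hat{L}\Sigma A^*_+$ rather than merely in $\hat{T}\Sigma A^*_+$ (for a non-commutative $A$ the bar differential would not preserve primitives).
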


\begin{proposition}\label{prop_bar(env)=CE}
Given a pseudo-compact curved Lie algebra, $\mathfrak{g}$, the differential graded algebras $\mathcal{B}(U(\mathfrak{g}))$ and $\mathcal{C}(\mathfrak{g})$ are quasi-isomorphic. \qed
\end{proposition}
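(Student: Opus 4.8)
The plan is to reproduce, in the curved pseudo-compact setting, the classical comparison between the bar construction of a universal enveloping algebra and the Chevalley--Eilenberg complex, both of which compute Lie algebra (co)homology, and then to upgrade it by a filtration argument. First I would write down the natural comparison map. The inclusion $\mathfrak{g}\hookrightarrow U(\mathfrak{g})_+$ dualises to a continuous linear surjection $(U(\mathfrak{g})_+)^*\to\mathfrak{g}^*$; suspending and applying the symmetric algebra functor, this assembles into the composite of algebra maps
$$\mathcal{B}(U(\mathfrak{g}))=T\Sigma(U(\mathfrak{g})_+)^*\longrightarrow T\Sigma\mathfrak{g}^*\longrightarrow S\Sigma\mathfrak{g}^*=\mathcal{C}(\mathfrak{g}),$$
where the second arrow is the canonical projection from the tensor algebra to the symmetric algebra. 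Concretely this is the continuous dual of the antisymmetrisation map $\Lambda\mathfrak{g}\to\bar{T}(U(\mathfrak{g})_+)$ used in the uncurved case. The first task is then the routine but slightly tedious verification that this is a map of differential graded algebras: one checks that the bar differential, whose components come from the multiplication, differential and curvature of $U(\mathfrak{g})$, is carried to the Chevalley--Eilenberg differential, whose components come from the bracket, differential and curvature of $\mathfrak{g}$.

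Next I would introduce the Poincar\'e--Birkhoff--Witt filtration. The PBW theorem identifies the associated graded of $U(\mathfrak{g})$ with the symmetric algebra $S\mathfrak{g}$, that is, the enveloping algebra of the abelianisation of $\mathfrak{g}$. Filtering $\mathcal{B}(U(\mathfrak{g}))$ by PBW-degree and $\mathcal{C}(\mathfrak{g})$ by symmetric weight yields two complete and exhaustive filtrations---completeness being guaranteed by pseudo-compactness, cf.\ Proposition \ref{prop_pronilpotent}---and the comparison map above is filtered. On the associated graded the curvature term drops out, since taking the bracket with it raises filtration degree exactly as in Proposition \ref{prop_canonical_filtration}, and the bracket is replaced by the zero bracket of the abelianisation, so the induced map becomes the comparison
$$\mathcal{B}(S\mathfrak{g})\longrightarrow S\Sigma\mathfrak{g}^*$$
for an \emph{abelian} Lie algebra, now entirely uncurved.

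The heart of the argument is then this abelian case, which is precisely the Koszul duality between the symmetric and exterior algebras: the bar construction of a free graded-commutative algebra is quasi-isomorphic to the free graded-commutative algebra on the suspension of its generators, the underlying quasi-isomorphism being the Koszul resolution of the ground field over $S\mathfrak{g}$. This is classical and can be quoted directly. Finally I would assemble the pieces: the comparison map induces an isomorphism on the $E_1$-pages of the spectral sequences associated to the two complete filtrations, and since both filtrations are complete, exhaustive, and the associated spectral sequences converge, the standard comparison theorem for filtered complexes promotes this to a genuine quasi-isomorphism $\mathcal{B}(U(\mathfrak{g}))\simeq\mathcal{C}(\mathfrak{g})$.

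I expect the main obstacle to be the convergence bookkeeping in the pseudo-compact setting rather than any single calculation: one must ensure that the PBW filtrations on both sides are complete and that the associated spectral sequences genuinely converge, so that the comparison theorem applies, while simultaneously tracking the curvature contributions to both differentials and confirming that they vanish on the associated graded. All of these steps run parallel to the corresponding argument of Lazarev and Markl, with the curvature handled exactly as the filtration conventions of Section \ref{sec_curvedLie} dictate.
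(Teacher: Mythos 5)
Your skeleton---the comparison map built from $\mathfrak{g}\hookrightarrow U(\mathfrak{g})_+$ and the projection $T\to S$, reduction by a filtration to the abelian case, and the Koszul duality between symmetric and exterior algebras---is the classical argument, and it is essentially what the paper intends: it offers no proof of its own, deferring to Lazarev--Markl, whose uncurved argument is the one you describe. The gap is in the single weight filtration you propose once the curvature is present. On $\mathcal{C}(\mathfrak{g})=S\Sigma\mathfrak{g}^*$ the differential has a component dual to the bracket, which \emph{raises} the symmetric weight by one, and a component dual to the curvature (evaluation of a generator against $\omega$), which \emph{lowers} it by one. Hence the symmetric-weight filtration, indexed in either direction, is not preserved by the differential, so ``filter $\mathcal{C}(\mathfrak{g})$ by symmetric weight'' does not produce a filtered complex at all, and no choice of variance fixes both components simultaneously. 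Your justification that the curvature ``drops out since taking the bracket with it raises filtration degree exactly as in Proposition~\ref{prop_canonical_filtration}'' imports the mechanism of the lower central series filtration into the weight filtration, where it does not apply: the curvature term of the Chevalley--Eilenberg and bar differentials is not a bracket with $\omega$, and under the weight grading it moves in the opposite direction to the bracket term. (On the $\mathcal{B}(U(\mathfrak{g}))$ side the increasing PBW filtration happens to survive, but the comparison map cannot then be matched against a genuine filtration on the target.)

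The repair is a two-stage filtration, in the same spirit as the iterated filtrations $F$, $G$, $H$ in the proof of Lemma~\ref{lemma_Key}. First filter both sides by the admissible filtration coming from the lower central series $\mathfrak{g}=\mathfrak{g}_1\supseteq\mathfrak{g}_2\supseteq\cdots$ of Proposition~\ref{prop_pronilpotent}: on $\mathfrak{g}^*$ take the exhaustive increasing filtration by the annihilators of the $\mathfrak{g}_i$, extended multiplicatively to $S\Sigma\mathfrak{g}^*$, and correspondingly on $U(\mathfrak{g})$ take the multiplicative extension of the lower central series. Since $[\mathfrak{g}_i,\mathfrak{g}_j]\subseteq\mathfrak{g}_{i+j}$ while $\omega\in\mathfrak{g}_1$, the bracket and multiplication components preserve this weight and the curvature component strictly drops it, so on the associated graded the curvature vanishes and one is reduced to the uncurved comparison $\mathcal{B}(U(\gr\mathfrak{g}))\to\mathcal{C}(\gr\mathfrak{g})$ for the honest dgla $\gr\mathfrak{g}$. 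Only at that point do your PBW and symmetric-weight filtrations become filtrations of complexes, and the remainder of your argument---abelianisation via PBW, the Koszul resolution, and the spectral sequence comparison, with completeness and exactness supplied by pseudo-compactness and topological duality---goes through as you wrote it.
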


\subsection{Model structure}\label{sec_model_cat}

The category $\curvedLie_*$ will now be endowed with a model structure that will be shown to be Quillen equivalent (via the contravariant functors defined in Section \ref{sec_adjoint}) to the model category of unital cdgas given by Hinich \cite{hinich_homo_alg}. For completeness, first recall the model structure given by Hinich.

\begin{theorem}[Hinich]
The category of unital cdgas is a closed model category where a morphism is
\begin{itemize}
\item a weak equivalence if, and only if, it is a quasi-isomorphism;
\item a fibration if, and only if, it is surjective;
\item a cofibration if, and only if, it has the LLP with respect to all acyclic fibrations. \qed
\end{itemize}
\end{theorem}

\begin{definition}\label{def_model_structure_of_curved_Lie}
A morphism $(f,\alpha)\colon \mathfrak{g}\to\mathfrak{h}$ in $\curvedLie_*$ is called
\begin{itemize}
\item a weak equivalence if, and only if, it belongs to the minimal class of morphisms containing the filtered quasi-isomorphisms and satisfy the two out of three property;
\item a fibration if, and only if, the underlying graded Lie algebra morphism, $f$, is a surjective morphism;
\item a cofibration if, and only if, it has the LLP with respect to all acyclic fibrations.
\end{itemize}
\end{definition}

Some preliminary results will now be discussed before showing that $\curvedLie_*$ is in fact a model category with the above model structure. First it is helpful to look at some useful facts regarding filtered quasi-isomorphisms and the units of the adjunction given in Section \ref{sec_adjoint}.

\begin{proposition}\label{prop_C_preserves_weak}
Given a filtered quasi-isomorphism $(f,\alpha)\colon \mathfrak{g}\to\mathfrak{h}$, the induced morphism
\[
\C(f,\alpha)\colon \C(\mathfrak{h})\to\C(\mathfrak{g})
\]
is a quasi-isomorphism of $\mathscr{A}$. Conversely, given a quasi-isomorphism $g\colon A\to B$ of $\mathscr{A}$ the induced morphism
\[
\hatL(g)\colon \hatL (B)\to\hatL (A)
\]
is a filtered quasi-isomorphism.
\end{proposition}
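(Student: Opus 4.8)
The plan is to prove each direction by passing to the associated graded with respect to suitable admissible filtrations, where both the curvature of the objects and the $\alpha$-component of the morphism become invisible because they strictly raise filtration degree. This reduces the two curved statements to classical facts about honest (co)chain complexes: that $\C$ carries a quasi-isomorphism of dglas to a quasi-isomorphism of cdgas, and that linear duality and the free Lie functor preserve quasi-isomorphisms over a field of characteristic zero. The recurring mechanism is that a term which lowers (resp.\ raises) filtration degree contributes nothing to the leading part of the induced filtration on $\C(\mathfrak g)$ (resp.\ on $\hatL(A)$).

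For the first direction, fix admissible filtrations $F$ on $\mathfrak g$ and $\mathfrak h$ witnessing that $(f,\alpha)$ is a filtered quasi-isomorphism. Since $\mathfrak g=\varprojlim_p \mathfrak g/F_p\mathfrak g$, the continuous dual $\Sigma\mathfrak g^*$ carries the exhaustive increasing filtration $W_p=\Sigma(\mathfrak g/F_{p+1}\mathfrak g)^*$ with $\gr^W_p\Sigma\mathfrak g^*\cong\Sigma(\gr^F_p\mathfrak g)^*$; extending multiplicatively gives an exhaustive, bounded-below filtration $W$ on $\C(\mathfrak g)=S\Sigma\mathfrak g^*$. First I would verify that all three parts of the Chevalley--Eilenberg differential are $W$-filtered: the internal part $d^*$ and the bracket part preserve $W$ because $d$ and $[\,,\,]$ respect $F$, whereas the curvature part strictly lowers $W$-degree, sending a weight-$p$ generator $\xi$ to the scalar $\xi(\omega)\in k$ of weight $0$. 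Hence the curvature contributes nothing to the leading term, and one identifies $(E_0,d_0)$ with the \emph{uncurved} Chevalley--Eilenberg cochain complex $\C(\gr_F\mathfrak g)$ of the associated graded; similarly the $\alpha$-component lowers $W$-degree, so $\gr^W\C(f,\alpha)$ is simply $\C(\gr_F f)$, where $\gr_F f\colon\gr_F\mathfrak g\to\gr_F\mathfrak h$ is the (strict) dgla quasi-isomorphism supplied by the hypothesis. It then remains to invoke the classical fact that $\C$ preserves quasi-isomorphisms of dglas, which I would prove by filtering $S\Sigma\mathfrak g^*$ by symmetric word-length: the $E_1$-page is $S\Sigma H(\mathfrak g)^*$, on which a dgla quasi-isomorphism induces an isomorphism. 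Transporting this isomorphism of $E_1$-pages back up the $W$-spectral sequence via the comparison theorem yields that $\C(f,\alpha)$ is a quasi-isomorphism.

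For the converse, equip $\hatL(A)=\hat{L}\Sigma A^*_+$ with the lower central series filtration, which is admissible by Propositions \ref{prop_pronilpotent} and \ref{prop_canonical_filtration}. Because $\hatL(A)$ is freely generated this coincides with the bracket-length filtration, and its associated graded is the free graded Lie algebra on the complex $(\Sigma A^*_+,d^*_+)$: among the structure maps $m^*_+$ raises bracket-length and $d^*_k+m^*_k$ is the filtration-raising curvature, so the only surviving term is the linear part $d^*_+$. Thus $\gr_F\hatL(g)$ is the morphism of free Lie algebras induced by $\Sigma g^*_+$ (the $\alpha$-part $g^*_k$, being a weight-one twist, raises filtration and so disappears on $\gr$). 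It therefore suffices to show that $g_+\colon(A_+,d_+)\to(B_+,d_+)$ is a quasi-isomorphism, which follows from the five lemma applied to the morphism of short exact sequences of complexes $0\to k\to A\to(A_+,d_+)\to 0$ induced by $g$, using that $g$ is the identity on $k\cdot 1$ and a quasi-isomorphism on $A$. Dualising (exact over a field) and applying the free Lie functor, which preserves quasi-isomorphisms in characteristic zero since it is a natural direct summand of the tensor algebra, shows that $\gr_F\hatL(g)$ is a quasi-isomorphism, i.e.\ $\hatL(g)$ is a filtered quasi-isomorphism.

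The hard part will be convergence of the $W$-spectral sequence in the first direction. The symmetric algebra is taken uncompleted and the ambient grading is an unbounded $\mathbb Z$-grading, so a fixed total degree may a priori receive contributions from infinitely many $W$-weights, and one cannot appeal to naive boundedness. The resolution is that the admissible filtrations are complete and Hausdorff, which makes both the $W$-filtration and the auxiliary word-length filtration complete and exhaustive and hence the spectral sequences regular; the comparison theorem for such (bounded below, and thereby strongly) convergent spectral sequences then applies, cf.\ \cite{weibel}. Verifying these completeness hypotheses and that they force strong convergence in each fixed degree is the one genuinely technical point; once the filtered behaviour of the curvature and of $\alpha$ is recorded, the identifications of the $E_0$- and $E_1$-pages are routine.
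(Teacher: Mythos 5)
Your argument is essentially the paper's own proof carried out in much greater detail: the paper likewise passes to the associated graded of the filtration induced on $\C(-)$ by the admissible filtrations for the first direction, and uses the bracket-length filtration on $\hatL(-)$ together with freeness for the converse, merely asserting the identification and convergence steps that you spell out. The one imprecision is your claim that the auxiliary word-length filtration on the uncompleted symmetric algebra is complete (it is not, being a direct sum); the argument survives because within each fixed $W$-weight the word length is bounded above by that weight, so the auxiliary spectral sequence is bounded, while the main $W$-spectral sequence converges for the reason you give, namely that $W$ is increasing, exhaustive and bounded below.
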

\begin{proof}
In the first instance, there exist filtrations on $\mathfrak{g}$ and $\mathfrak{h}$ such that $\gr_F (f,\alpha)$ is a quasi-isomorphism. These filtrations induce increasing and cocomplete filtrations upon $\C (\mathfrak{g})$ and $\C(\mathfrak{h})$. Therefore, $\gr_F \C(f,\alpha)$ is a quasi-isomorphism and---since the homology of chain complexes commutes with filtered colimits---$\C (f,\alpha)$ is a quasi-isomorphism.

Now for the converse statement. After applying the contravariant functor $\hatL$, take the filtrations induced by the lower central series. Since the brackets are built freely they preserve quasi-isomorphisms, whence $\hatL (g)$ is a filtered quasi-isomorphism.
\end{proof}

\begin{proposition}
\leavevmode
\begin{itemize}
\item Given a unital cdga, $A$, the morphism $i_A\colon \mathcal{C}\hatL(A)\to A$ is a quasi-isomorphism.
\item Given a pseudo-compact curved Lie algebra, $\mathfrak{g}$, the morphism $i_\mathfrak{g}\colon \hatL\mathcal{C}(\mathfrak{g})\to \mathfrak{g}$ is a filtered quasi-isomorphism.
\end{itemize}
\end{proposition}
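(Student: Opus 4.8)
The plan is to deduce both statements from Positselski's associative bar--cobar Quillen equivalence recalled above, exactly through the commutative square relating $\hatL,\C$ to $\hat{\mathcal{B}},\mathcal{B}$ via the universal enveloping algebra functor $U$ and the primitives functor $\operatorname{Prim}$; this mirrors the strategy of Lazarev and Markl. The two structural inputs that make the embedding usable are the natural isomorphism $\operatorname{Prim}\hat{\mathcal{B}}(A)\cong\hatL(A)$ and the quasi-isomorphism $\mathcal{B}U(\mathfrak{g})\simeq\C(\mathfrak{g})$ of Proposition \ref{prop_bar(env)=CE}, together with Proposition \ref{prop_C_preserves_weak}, which transports (filtered) quasi-isomorphisms across $\C$ and $\hatL$.

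For the first bullet I would rewrite $\C\hatL(A)$ on the associative side. Since $\hat{\mathcal{B}}(A)$ has underlying graded algebra the tensor algebra $\hat{T}\Sigma A_+^*$, its primitives form the free Lie algebra $\hat{L}\Sigma A_+^*$, so $\hatL(A)\cong\operatorname{Prim}\hat{\mathcal{B}}(A)$; applying $U$ and using that the universal envelope of a free Lie algebra is again the tensor algebra (a Poincar\'e--Birkhoff--Witt identification, valid in characteristic $0$) yields $U\hatL(A)\cong\hat{\mathcal{B}}(A)$ as pseudo-compact curved associative algebras. Combined with Proposition \ref{prop_bar(env)=CE} this gives a chain $\C\hatL(A)\simeq\mathcal{B}U\hatL(A)\cong\mathcal{B}\hat{\mathcal{B}}(A)$, and the associative comparison map $\mathcal{B}\hat{\mathcal{B}}(A)\to A$ is a quasi-isomorphism by Positselski's equivalence. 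It then remains to check that this composite is precisely the adjunction map $i_A$ of Proposition \ref{prop_adjoint}, after which the first bullet follows.

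For the second bullet I would run the dual argument. Using $\hatL\cong\operatorname{Prim}\hat{\mathcal{B}}$ and applying $\hatL$ to the quasi-isomorphism $\mathcal{B}U(\mathfrak{g})\simeq\C(\mathfrak{g})$ --- which by Proposition \ref{prop_C_preserves_weak} becomes a filtered quasi-isomorphism --- identifies $\hatL\C(\mathfrak{g})$ with $\operatorname{Prim}\hat{\mathcal{B}}\mathcal{B}U(\mathfrak{g})$ up to filtered quasi-isomorphism. The associative comparison map $\hat{\mathcal{B}}\mathcal{B}(U\mathfrak{g})\to U\mathfrak{g}$ is a weak equivalence in $\widehat{\mathcal{A}ss}$; applying $\operatorname{Prim}$ and invoking the Milnor--Moore identification $\operatorname{Prim}U\mathfrak{g}\cong\mathfrak{g}$ (again characteristic $0$) produces a comparison $\operatorname{Prim}\hat{\mathcal{B}}\mathcal{B}U(\mathfrak{g})\to\mathfrak{g}$. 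Assembling these maps exhibits $i_\mathfrak{g}$ inside a zig-zag of filtered quasi-isomorphisms, hence as a weak equivalence by the two-out-of-three closure in Definition \ref{def_model_structure_of_curved_Lie}.

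The main obstacle is the filtration bookkeeping in the second bullet: because no claim is made that filtered quasi-isomorphisms compose, and because $\operatorname{Prim}$ is not exact, I must supply compatible admissible filtrations --- the bracket-length filtration on the free objects and the lower central series of Propositions \ref{prop_pronilpotent} and \ref{prop_canonical_filtration} --- along which each comparison map is a genuine filtered quasi-isomorphism, and then appeal to the two-out-of-three property rather than to naive composition. The parallel technical points, namely the Poincar\'e--Birkhoff--Witt and Milnor--Moore identifications in the pseudo-compact \emph{curved} setting and the verification that the abstract adjunction units $i_A$ and $i_\mathfrak{g}$ genuinely match Positselski's comparison maps under the embedding, are routine but essential, and are where the bulk of the remaining work will lie.
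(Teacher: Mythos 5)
Your first bullet is essentially the paper's own argument: the paper likewise writes $\C\hatL(A)\simeq\mathcal{B}U\hatL(A)=\mathcal{B}\hat{\mathcal{B}}(A)$ using Proposition \ref{prop_bar(env)=CE} and the identification $U\hatL(A)\cong\hat{\mathcal{B}}(A)$, and then quotes Positselski for $\mathcal{B}\hat{\mathcal{B}}(A)\simeq A$; your PBW remark is exactly the implicit content of that identification. For the second bullet you take a genuinely different route, and the paper's is both shorter and safer. The paper does not pass to the associative side at all: it filters $\mathfrak{g}$ by bracket length (the lower central series of Proposition \ref{prop_pronilpotent}), takes the induced filtration on $\hatL\C(\mathfrak{g})$, and observes that it then suffices to show $\hatL\C(\gr_F\mathfrak{g})\to\gr_F\mathfrak{g}$ is a quasi-isomorphism of dglas, which is the uncurved statement already proved in Lazarev--Markl. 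Your detour through $\operatorname{Prim}\hat{\mathcal{B}}\mathcal{B}U(\mathfrak{g})$ and Milnor--Moore is workable in outline, but the difficulties you flag at the end are exactly where it becomes delicate: $\hatL$ cannot literally be applied to the associative (non-commutative) algebra $\mathcal{B}U(\mathfrak{g})$, so the identification you want must go through $\operatorname{Prim}\hat{\mathcal{B}}$, and you would then have to prove that $\operatorname{Prim}$ carries the associative weak equivalence $\hat{\mathcal{B}}\mathcal{B}U(\mathfrak{g})\to U(\mathfrak{g})$ to a weak equivalence of curved Lie algebras --- a statement established nowhere in the paper and nontrivial precisely because $\operatorname{Prim}$ is not exact and filtered quasi-isomorphisms are not known to compose. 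The paper's reduction to the associated graded supplies the admissible filtration explicitly and replaces the entire associative detour, the Milnor--Moore step, and the zig-zag bookkeeping by a single citation to the known dgla case; I would adopt that argument for the second bullet.
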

\begin{proof}
By Proposition \ref{prop_bar(env)=CE} $\mathcal{C}(\hatL(A))\simeq \mathcal{B}U(\hatL(A))$. Now, $\mathcal{B}U\hatL(A)=\mathcal{B}\hat{\mathcal{B}}(A)$ which is quasi-isomorphic to $A$ by \cite[Section 9]{positselski}, considering $A$ as an associative dg algebra. Thus the first statement is proven.

To prove the second statement, consider the natural filtration by the lower central series on $\mathfrak{g}$ and the filtration it induces upon $\hatL\C (\mathfrak{g})$; denote these two filtrations by $F$. Therefore, it is sufficient to the show that the morphism $\hatL\C (\gr_F \mathfrak{g})\to \gr_F \mathfrak{g}$ is a quasi-isomorphism of dgla---this follows from \cite[Proposition 9.10]{laz_markl}.
\end{proof}

\begin{lemma}\label{lemma_swaps_cofibs_and_fibs}
The contravariant functor $\hatL$ sends fibrations to cofibrations and cofibrations to fibrations.
\end{lemma}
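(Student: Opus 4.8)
The plan is to treat the two assertions separately, since they are of genuinely different natures. That $\hatL$ sends cofibrations to fibrations is an elementary surjectivity check, whereas that it sends fibrations to cofibrations is the substantial statement; I would reduce the latter, via the adjunction of Proposition \ref{prop_adjoint}, to a lifting problem in $\mathscr{A}$.

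First I would dispose of the easy direction. Let $i\colon A\to B$ be a cofibration in $\mathscr{A}$. Cofibrations are retracts of semifree extensions and hence injective, so choosing the linear retractions on $A$ and $B$ compatibly, that is with $\epsilon_B\circ i=\epsilon_A$ (possible precisely because $i$ is injective), one gets $i_k=0$ and $i_+\colon A_+\to B_+$ injective. Dualising, $i^*_+\colon B^*_+\to A^*_+$ is surjective; and since the underlying graded Lie map of $\hatL(i)$ is the free extension of $i^*_+$ to $\hat L\Sigma B^*_+\to\hat L\Sigma A^*_+$, surjectivity on generators forces surjectivity of the whole map. Thus $\hatL(i)$ is a fibration. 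That a different choice of retraction alters $\hatL(i)$ only by conjugation with a twist $(\id,x)$, whose underlying graded Lie map is an isomorphism, shows the conclusion is independent of the choices.

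For the main direction, let $p\colon A\to B$ be a fibration of $\mathscr{A}$, i.e. a surjection, and let $q\colon\mathfrak{g}\to\mathfrak{h}$ be an arbitrary acyclic fibration of $\curvedLie_*$. Using the adjunction isomorphism $\Hom_\mathscr{A}(\C\mathfrak{g},A)\cong\Hom_{\curvedLie}(\hatL A,\mathfrak{g})$ and its naturality in both variables, a lifting square for $\hatL(p)$ against $q$ transposes precisely to a lifting square for $p$ against $\C(q)$; concretely, $\hatL(p)$ has the LLP against $q$ if and only if $p$ has the RLP against $\C(q)$. As $p$ is a fibration it therefore suffices to show $\C(q)$ is an acyclic cofibration. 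Acyclicity is immediate from Proposition \ref{prop_C_preserves_weak}, since the weak equivalences of $\curvedLie_*$ are generated under two-out-of-three by filtered quasi-isomorphisms and $\C$ carries these to quasi-isomorphisms.

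The remaining point, that $\C(q)$ is a cofibration, is where I expect the main obstacle to lie. Because $q$ is surjective, dualising $0\to\mathfrak{k}\to\mathfrak{g}\to\mathfrak{h}\to 0$ with $\mathfrak{k}=\ker q$ exhibits $\C(q)$ as the free extension $S\Sigma\mathfrak{h}^*\hookrightarrow S\Sigma\mathfrak{h}^*\otimes S\Sigma\mathfrak{k}^*$. To recognise this as a standard cofibration I would filter the new generators $\Sigma\mathfrak{k}^*$ by the admissible (lower central series) filtration, taking the $n$-th layer to consist of the functionals vanishing on $F_{n+1}\mathfrak{k}$. The bracket part of the Chevalley--Eilenberg differential strictly lowers this filtration because $[F_a,F_b]\subseteq F_{a+b}$, and the curvature part lands in the base; the only term not strictly dropping the filtration is the one induced by $d_\mathfrak{g}$, which merely preserves each layer. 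The key observation is that on the associated graded this surviving term is an honest differential, since the filtration is admissible and hence $\gr_F\C(\mathfrak{g})=\C(\gr_F\mathfrak{g})$ is a genuine complex. Splitting each layer over the ground field into a contractible subcomplex and its homology then presents $\C(q)$ as built, layer by layer, by adjoining a contractible complex (a pushout of generating acyclic cofibrations) together with generators whose differential lands in strictly earlier layers; this is a transfinite composite of pushouts of generating cofibrations, hence a cofibration. Transposing the resulting lift back through the adjunction solves the original problem and shows $\hatL(p)$ is a cofibration.
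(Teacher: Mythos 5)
Your proposal is correct and follows essentially the same route as the paper: the easy direction is the same observation that cofibrations of unital cdgas are monomorphisms and $\hatL$ turns these into surjections, and the main direction uses the same adjunction transposition to reduce to showing that $\C$ of an acyclic fibration $(\phi,\alpha)\colon\mathfrak{g}\to\mathfrak{h}$ is an acyclic cofibration, with acyclicity from Proposition \ref{prop_C_preserves_weak} and the cofibration property obtained by filtering the kernel via the lower central series (the paper phrases this as a tower of quotients $\mathfrak{g}/(\mathfrak{g}_n\cap K)$ and cites 5.2.2 of \cite{hinich_stacks} for the step you spell out as the contractible-plus-homology splitting). The only difference is expository detail, not substance.
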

\begin{proof}
Given a fibration $f\colon A\to B$ of unital cdgas, to show that $\hatL(f)$ is a cofibration it is necessary to find a lift in each diagram of the form
\begin{center}
\begin{tikzpicture}
\matrix (m) [matrix of math nodes, column sep=2.5em, row sep=2.5em]
{
\hatL (B) & \mathfrak{g} \\
\hatL (A) & \mathfrak{h}, \\
};
\path[->]
(m-1-1) edge (m-1-2)
(m-1-1) edge node[left] {$\hatL (f)$} (m-2-1)
(m-2-1) edge[dashed] (m-1-2)
(m-2-1) edge (m-2-2)
(m-1-2) edge node[right] {$(\phi , \alpha )$} (m-2-2)
;
\end{tikzpicture}
\end{center}
where the morphism $(\phi ,\alpha )\colon \mathfrak{g}\to\mathfrak{h}$ is an acyclic fibration of pseudo-compact curved Lie algebras. This is equivalent to seeking a lift in each diagram of the following form
\begin{center}
\begin{tikzpicture}
\matrix (m) [matrix of math nodes, column sep=2.5em, row sep=2.5em]
{
\mathcal{C}(\mathfrak{h}) & A \\
\mathcal{C}(\mathfrak{g}) & B. \\
};
\path[->]
(m-1-1) edge (m-1-2)
(m-1-1) edge node[left] {$\C (\phi ,\alpha )$} (m-2-1)
(m-2-1) edge[dashed] (m-1-2)
(m-2-1) edge (m-2-2)
(m-1-2) edge node[right] {$f$} (m-2-2);
\end{tikzpicture}
\end{center}
This follows from the adjunction of the contravariant functors $\hatL$ and $\C$, see Proposition \ref{prop_adjoint}. Now, by assumption, the morphism $f$ is a fibration. Further, by Proposition \ref{prop_C_preserves_weak}, the morphism $\C(\phi ,\alpha )$ is a weak equivalence. Therefore, it suffices show that the morphism $\C(\phi ,\alpha )$ is a cofibration. Let $\lbrace F_i\mathfrak{g} \rbrace_{i\in \mathbb{N}}$ denote the lower central series and let the kernel of $( \phi , \alpha )$ be denoted by $K\subseteq\mathfrak{g}$. Therefore, the tower
\begin{center}
\begin{tikzpicture}
\matrix (m) [matrix of math nodes, column sep=2.5em]
{\mathfrak{h}\cong \mathfrak{g}/(F_1 \mathfrak{g}\cap K) & \mathfrak{g}/(F_2 \mathfrak{g}\cap K) & \mathfrak{g}/(F_3 \mathfrak{g}\cap K) & \cdots\\};
\path[<<-]
(m-1-1) edge node[above] {$\pi_1$} (m-1-2)
(m-1-2) edge node[above] {$\pi_2$} (m-1-3)
(m-1-3) edge node[above] {$\pi_3$} (m-1-4);
\end{tikzpicture}
\end{center}
is obtained. In Proposition \ref{prop_pronilpotent} it was shown that $\mathfrak{g}=\varprojlim_i \mathfrak{g}/F_i \mathfrak{g}$ for any $\mathfrak{g}\in\curvedLie_*$. The limit of the above tower is, therefore, simply $\mathfrak{g}$ and hence the colimit of
\begin{center}
\begin{tikzpicture}
\matrix (m) [matrix of math nodes, column sep=2.3em]
{\C(\mathfrak{h}) & \C(\mathfrak{g}/(F_1 \mathfrak{g}\cap K)) & \C(\mathfrak{g}/(F_2\mathfrak{g}\cap K)) & \C(\mathfrak{g}/(F_3 \mathfrak{g}\cap K)) & \cdots\\};
\path[right hook->]
(m-1-1) edge (m-1-2)
(m-1-2) edge (m-1-3)
(m-1-3) edge (m-1-4)
(m-1-4) edge (m-1-5);
\end{tikzpicture}
\end{center}
is $\C (\mathfrak{g})$. It is thus sufficient to prove that each of the morphisms $\C (\pi_n)$ for $n\in\mathbb{N}$ are cofibrations in $\mathscr{A}$.

Note that for each $n\geq 1$, $\ker(\pi_n)=(F_n \mathfrak{g} \cap K)/(F_{n+1} \mathfrak{g}\cap K)$. From here it is straightforward to see that, just as in \cite[Section 5.2.2]{hinich_stacks}, $\C(\pi_n)$ is a standard cofibration obtained by adding free generators to $\C (\mathfrak{g}/(F_n \mathfrak{g} \cap K))$.

To complete the proof of the statement, one employs the standard fact that cofibrations in unital cdga are monomorphisms and the contravariant functor $\hatL$ sends monomorphisms to epimorphisms, i.e.~fibrations.
\end{proof}

By \cite[Theorem 9.7]{dwyer_spalinski}, once it has been shown that $\curvedLie_*$ is a CMC with the model structure of Definition \ref{def_model_structure_of_curved_Lie}, Theorem \ref{thm_main} will have been proven. To this end, it is first noted that despite the functor $(-)\coprod \mathfrak{h}$, for some fixed $\mathfrak{h}$, not being exact (see Remark \ref{remark_not_exact}) it does have the following redeeming property.

\begin{lemma}
Given a weak equivalence, $(f,\alpha )\colon \mathfrak{g}_1\to\mathfrak{g}_2$ of $\curvedLie_*$, for a fixed $\mathfrak{h}$
\[
(f,\alpha )\coprod (\id_\mathfrak{h}, 0)\colon \mathfrak{g}_1\coprod\mathfrak{h}\to\mathfrak{g}_2\coprod \mathfrak{h}
\]
is a weak equivalence too.
\end{lemma}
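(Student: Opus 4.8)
The plan is to cut the statement down to a single tractable case and then bootstrap. Write $T=(-)\coprod\mathfrak{h}$ for the functor on $\curvedLie_*$ given by taking the coproduct with the fixed object $\mathfrak{h}$; being a functor, $T$ preserves composites, so $T(g\circ f)=T(g)\circ T(f)$. Let $\mathcal{S}$ be the class of morphisms $(f,\alpha)$ for which $T(f,\alpha)$ is a weak equivalence. Since the weak equivalences are by definition the smallest class of filtered quasi-isomorphisms closed under the two-out-of-three property, and since $T$ sends composites to composites, the class $\mathcal{S}$ is itself closed under two-out-of-three. Hence, once I show that $\mathcal{S}$ contains every filtered quasi-isomorphism, minimality forces $\mathcal{S}$ to contain every weak equivalence, which is exactly the assertion. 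So from now on I may assume that $(f,\alpha)\colon\mathfrak{g}_1\to\mathfrak{g}_2$ is itself a filtered quasi-isomorphism, and I will in fact prove the sharper statement that $T(f,\alpha)$ is again a filtered quasi-isomorphism.

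First I would unwind the induced map. By the universal property of Proposition \ref{prop_coprod} and the description of the two structure maps, the morphism $(f,\alpha)\coprod(\id_\mathfrak{h},0)$ is the curved morphism whose underlying Lie map restricts to $f$ on $\mathfrak{g}_1$ and to the identity on $\mathfrak{h}$, sends the formal generator $x_1$ to $-x_2-\alpha$, and whose curvature datum is again $\alpha$. Next I would transport the filtrations. Let $G$ be admissible filtrations on $\mathfrak{g}_1,\mathfrak{g}_2$ witnessing that $\gr_G(f,\alpha)$ is a quasi-isomorphism, and let $L$ be the lower central series filtration on $\mathfrak{h}$. Declare a bracket monomial to lie in $\tilde{F}_n(\mathfrak{g}_i\coprod\mathfrak{h})$ when the sum of the filtration degrees of its letters---using $G$ on the $\mathfrak{g}_i$-letters, $L$ on the $\mathfrak{h}$-letters, and degree $1$ for $x$---is at least $n$. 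A short check on generators shows that $\tilde{F}$ respects the bracket and the differential; the only point needing attention is $dx=\omega_\mathfrak{h}-\omega_\mathfrak{g}-\frac{1}{2}[x,x]$, each of whose summands has weight at least $1$ because the curvatures already lie in filtration degree $1$. Completeness and the Hausdorff property follow from pseudo-compactness together with the positivity of the weights, and the associated graded is a genuine complex by the mechanism of Proposition \ref{prop_canonical_filtration}, so $\tilde{F}$ is admissible.

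It then remains to show that $\gr_{\tilde{F}}$ of the map is a quasi-isomorphism. Writing $A_i=\gr_G\mathfrak{g}_i$ and $H=\gr_L\mathfrak{h}$, the terms $-\frac{1}{2}[x,x]$ and $-ad_x$ are of strictly higher weight and drop out, so $\gr_{\tilde{F}}(\mathfrak{g}_i\coprod\mathfrak{h})$ is the free product $A_i*H*\hat{L}\langle\bar{x}\rangle$ carrying the internal differentials of $A_i$ and $H$ together with $\bar{d}\bar{x}=\bar\omega_H-\bar\omega_{A_i}$; the induced map is $\gr_G(f,\alpha)$ on the $A$-factor, the identity on $H$, and $\bar{x}_1\mapsto-\bar{x}_2-\bar\alpha$, and it is automatically a chain map, the coproduct morphism being curved and $\tilde{F}$ respected. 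I would view this as the honest free product $A_i*H$ with a single free generator $\bar{x}$ adjoined along the cycle $c_i=\bar\omega_H-\bar\omega_{A_i}$. Free products of dglas preserve quasi-isomorphisms in characteristic zero---via universal envelopes, the Poincar\'e--Birkhoff--Witt isomorphism, the K\"unneth theorem over a field, and the primitive-element functor, exactly as the free constructions are used in Proposition \ref{prop_C_preserves_weak}---so $\gr_G(f,\alpha)*\id_H\colon A_1*H\to A_2*H$ is a quasi-isomorphism; moreover the curvature condition for $(f,\alpha)$ gives $\bar\omega_{A_2}=\gr_G(f,\alpha)(\bar\omega_{A_1})+\bar{d}\bar\alpha$, so $c_1$ is carried to a cycle homologous to $c_2$. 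Filtering $\gr_{\tilde{F}}(\mathfrak{g}_i\coprod\mathfrak{h})$ by the number of $A$-letters, along which the coupling term $-\bar\omega_{A_i}$ is the unique off-diagonal contribution, upgrades this to a quasi-isomorphism of the two extensions.

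The hard part will be exactly this curvature coupling $\bar{d}\bar{x}=\bar\omega_H-\bar\omega_{A_i}$: it is what prevents the associated graded from splitting as a plain free product of a quasi-isomorphism with an isomorphism, and it forces the subsidiary filtration by $A$-letters, whose convergence has to be read off from the completeness of the filtrations. Granting this last point, $(f,\alpha)\coprod(\id_\mathfrak{h},0)$ is a filtered quasi-isomorphism and hence a weak equivalence, and the reduction of the first paragraph then yields the statement for an arbitrary weak equivalence $(f,\alpha)$.
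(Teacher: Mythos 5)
Your proof is correct and follows essentially the same strategy as the paper's own (two-sentence) proof: transport the given filtrations to the coproducts with $\mathfrak{h}$ and compare associated graded algebras. The one genuine addition is your opening two-out-of-three reduction via the minimality of the class of weak equivalences --- the paper silently treats every weak equivalence as if it were already a filtered quasi-isomorphism, so your version is in fact the more complete one --- while the remainder (the explicit total-weight filtration, the free-product argument, and the subsidiary filtration by the number of $A$-letters to absorb the curvature coupling $\bar{d}\bar{x}=\bar\omega_H-\bar\omega_{A_i}$) is a careful expansion of what the paper dismisses with ``the associated graded algebras are clearly quasi-isomorphic.''
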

\begin{proof}
The morphism $(f,\alpha )$ is a weak equivalence and so there exist filtrations, $F$ and $G$, on $\mathfrak{g}_1$ and $\mathfrak{g}_2$, respectively, such that the induced morphism on the associated graded algebras is a quasi-isomorphism. The filtrations $F$ induces a filtration on the $\mathfrak{g}_1\coprod \mathfrak{h}$ given by
\[
\tilde{F}_i=\twopartdef{F_i\mathfrak{g}_1\coprod\mathfrak{h}}{i=1}{F_i\mathfrak{g}_1\coprod 0}{i>1}.
\]
Likewise, $G$ induces a filtration, $\tilde{G}$, on $\mathfrak{g}_2\coprod \mathfrak{h}$. The associated graded algebras of the coproducts are clearly quasi-isomorphic via the induced morphisms.
\end{proof}

Now, enough auxiliary results have been developed to prove the remaining axioms of a model category, i.e.~the lifting and the factorisation properties. The proofs of the lifting axioms rely on the next lemma which is proven with methods based upon those of Positselski \cite{positselski} which in turn are based upon constructions originally performed by Hinich \cite{hinich_stacks} in the proof of a similar lemma named op.~cit.~the `Key Lemma'.

\begin{lemma}\label{lemma_Key}
Let $A$ be a unital cdga, $\mathfrak{g}$ be a pseudo-compact curved Lie algebra and $f\colon A\to\mathcal{C}(\mathfrak{g})$ be a surjective morphism.
Consider the pushout
\begin{center}
\begin{tikzpicture}
\matrix (m) [matrix of math nodes, column sep=2em, row sep=2em]
{
\hatL\mathcal{C}(\mathfrak{g})	& \hatL(A) \\
\mathfrak{g}									& \hatL (A) \coprod_{\hatL\C (\mathfrak{g})} \mathfrak{g}. \\
};
\path[->]
(m-1-1) edge node[above] {$\hatL(f)$} (m-1-2)
(m-1-1) edge node[left] {$i_\mathfrak{g}$} (m-2-1)
(m-1-2) edge node[right] {$j$} (m-2-2)
(m-2-1) edge (m-2-2);
\end{tikzpicture}
\end{center}
Then the morphism $j\colon \hatL(A)\to \hatL (A) \coprod_{\hatL\C (\mathfrak{g})} \mathfrak{g}$ is a weak equivalence.
\end{lemma}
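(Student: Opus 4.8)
The plan is to prove the statement by transporting the entire pushout square across the universal enveloping algebra functor $U:\curvedLie_*\to\widehat{\mathcal{A}ss}$, reducing to the already-established associative theory of \cite{positselski}. Since $U$ is a left adjoint (to $\operatorname{Prim}$) it preserves colimits, so applying it to the square produces the pushout
$$U\hatL\C(\mathfrak g)\longrightarrow U\hatL(A),\qquad U\hatL\C(\mathfrak g)\longrightarrow U\mathfrak g$$
with apex $U\bigl(\hatL(A)\coprod_{\hatL\C(\mathfrak g)}\mathfrak g\bigr)$ and induced map $U(j)$. The first thing I would record is the identification $U(\hat L V)\cong\hat T V$: the enveloping algebra of a free Lie algebra is the completed tensor algebra. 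Matching the induced differentials and curvature, this upgrades to isomorphisms of pseudo-compact curved associative algebras $U\hatL(A)\cong\hat{\mathcal B}(A)$ and $U\hatL\C(\mathfrak g)\cong\hat{\mathcal B}\C(\mathfrak g)$, and by naturality $U\hatL(f)\cong\hat{\mathcal B}(f)$. Thus the transported square is exactly a cobar-side pushout.

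Next I would supply the two inputs required on the associative side. Because $f$ is surjective it is a fibration in $\mathcal{A}ss$, so by the associative counterpart of Lemma \ref{lemma_swaps_cofibs_and_fibs} (Positselski's cobar functor interchanges fibrations and cofibrations) the map $\hat{\mathcal B}(f)=U\hatL(f)$ is a cofibration in $\widehat{\mathcal{A}ss}$. Separately, $i_\mathfrak g$ is a weak equivalence of $\curvedLie_*$, and I would show $U$ carries it to a weak equivalence: fixing the lower-central-series filtration so that $\gr_F i_\mathfrak g$ is a quasi-isomorphism of dgla, the Poincar\'e--Birkhoff--Witt filtration on $U$ has associated graded the symmetric algebra $S\gr_F(-)$, and a characteristic-zero K\"unneth argument shows $S(\gr_F i_\mathfrak g)$, hence $\gr U(i_\mathfrak g)$, is a quasi-isomorphism. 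So $U(i_\mathfrak g)$ is a filtered quasi-isomorphism, i.e. a weak equivalence of $\widehat{\mathcal{A}ss}$.

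With these in hand, $U(j)$ is the pushout of the weak equivalence $U(i_\mathfrak g)$ along the cofibration $\hat{\mathcal B}(f)$, so the corresponding statement in $\widehat{\mathcal{A}ss}$ (left properness, equivalently Positselski's analogue of the Key Lemma) gives that $U(j)$ is a weak equivalence. It then remains to descend to $\curvedLie_*$, for which I would argue that $U$ reflects the relevant equivalences: since the ground field has characteristic zero, Milnor--Moore identifies $H_*$ of a dg Hopf algebra $U\gr_F(-)$ with the enveloping algebra on the primitives $H_*\gr_F(-)$, so a quasi-isomorphism $U\gr_F(j)$ forces $\gr_F(j)$ to be one. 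Hence $j$ is a filtered quasi-isomorphism, and therefore a weak equivalence of $\curvedLie_*$.

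The main obstacle I anticipate is the compatibility of $U$ with the admissible filtrations: one must check that the filtration making $i_\mathfrak g$ a filtered quasi-isomorphism induces, through $U$ and through the pushout, admissible filtrations on all four corners for which $\gr U$ coincides with $S$ applied to $\gr$, so that PBW and K\"unneth may be invoked in both the preserving and the reflecting directions, and one must pin down the precise form of the associative statement cited from \cite{positselski}. Should only a weaker associative fact be available, I would instead run the associated-graded computation directly on the Lie side: as $\hatL(f)$ is the cofibration obtained by freely adjoining the generators $\Sigma W'$ dual to $\ker f$, the pushout is the free extension $\mathfrak g\langle\Sigma W'\rangle$, and $j$ restricts to $i_\mathfrak g$ on the base and to the identity on $\Sigma W'$; choosing the filtration that combines the lower central series with the count of new generators, $\gr_F(j)$ becomes the free extension of the quasi-isomorphism $\gr_F i_\mathfrak g$ by $\Sigma W'$, which is a quasi-isomorphism by the same K\"unneth argument, giving the result without passing through reflection.
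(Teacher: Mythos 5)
Your primary route is genuinely different from the paper's. The paper never leaves the Lie side: it splits $A\cong\C(\mathfrak{g})\oplus\ker f$ as graded vector spaces, observes that the pushout is $\mathfrak{g}$ with the generators dual to $\ker f$ freely adjoined, and then proves directly that $j$ is a filtered quasi-isomorphism by building an explicit tower of three nested filtrations: $F$ (lower central series on $\mathfrak{g}$, transported to $\C(\mathfrak{g})$ and to $A$ by preimages), then $G$ (by the grading of the components of $\ker(\gr_F A\to\C(\gr_F\mathfrak{g}))$), then $H$ (combining the two gradings). Only after $\gr_H\gr_G\gr_F$ does the map split as $\hatL\C(\gr_F\mathfrak{g})\coprod X\to\gr_F\mathfrak{g}\coprod X$, at which point the result follows from the quasi-isomorphism $\hatL\C(\gr_F\mathfrak{g})\to\gr_F\mathfrak{g}$ and the fact that $(-)\coprod X$ preserves weak equivalences. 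Your plan of transporting the square through $U$ and invoking Positselski is attractive because it would let the associative theory do the filtration work, but as written it rests on several claims the paper does not establish and that are not routine: (i) $U$ preserves colimits because it is left adjoint to the forgetful (commutator-bracket) functor, not to $\operatorname{Prim}$, and even this adjunction has to be checked in the pseudo-compact curved setting with curved morphisms; (ii) the statement you need from \cite{positselski} is not general left properness of $\widehat{\mathcal{A}ss}$ but a Key Lemma for pushouts of the specific counit map along $\hat{\mathcal{B}}(f)$ --- and your transported square pushes out $U(i_\mathfrak{g}):\hat{\mathcal{B}}\C(\mathfrak{g})\to U\mathfrak{g}$, whose source is only quasi-isomorphic to $\mathcal{B}U\mathfrak{g}$ (Proposition \ref{prop_bar(env)=CE}), so it is not literally Positselski's counit square; (iii) the preservation and reflection of weak equivalences by $U$ requires matching the admissible (decreasing, positively indexed) filtrations on both sides with the (increasing) PBW filtration, which your sketch conflates. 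Each of these is plausible in characteristic zero but each is a real piece of work.

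Your fallback paragraph is essentially the paper's proof, and it is the right argument, but the phrase ``the filtration that combines the lower central series with the count of new generators'' compresses away the actual difficulty. After passing to $\gr_F$ the differential can still map $\gr_F A$ into $\ker(\gr_F A\to\C(\gr_F\mathfrak{g}))$, so the naive splitting of $\gr_F(j)$ as a free extension of $\gr_F i_\mathfrak{g}$ is not compatible with the differential; this is exactly why the paper needs the further filtrations $G$ and $H$ to kill the cross-terms before the K\"unneth/coproduct argument applies. If you flesh out that bookkeeping you recover the paper's proof; if you insist on the associative detour you must supply items (i)--(iii) above.
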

\begin{proof}
The morphism $j\colon \hatL(A)\to \hatL (A) \coprod_{\hatL\C (\mathfrak{g})} \mathfrak{g}$ is a weak equivalence if there exists filtrations on $\hatL(A)$ and $\hatL (A) \coprod_{\hatL\C (\mathfrak{g})} \mathfrak{g}$ such that $j$ is a filtered quasi-isomorphism. To construct these filtrations, first filter $\mathfrak{g}$ by the natural filtration obtained by the lower central series. Denote this filtration, the filtration induced upon $\C(\mathfrak{g})$, and the filtration induced on $A$ by the pre-images of the surjective morphism $A\to\C (\mathfrak{g})$ by $F$. It is clear that $\gr_F (\hatL(A)\coprod_{\hatL\mathcal{C}(\mathfrak{g})}\mathfrak{g})=\hatL (\gr_F A)\coprod_{\hatL\C (\gr_F \mathfrak{g})} \gr_F \mathfrak{g}$.

Let $n$ denote the positive grading induced by the indexing of the filtration $F$ on $A$. An increasing filtration $G$ on $\gr_F A$ can be given by setting $G_0 \gr_F A = \gr_F A$ and $G_j\gr_F A$ the sum of the components of the ideal $\ker(\gr_F A\to\C ( \gr_F \mathfrak{g}))$ situated in the grading $n\geq j$. The filtration $G$ is locally finite with respect to the grading $n$. Let $G$ also denote the induced filtrations upon $\hatL(\gr_F A)$ and $\hatL(\gr_F A)\coprod_{\hatL \mathcal{C}(\gr_F \mathfrak{g})} \gr_F \mathfrak{g}$. Whence
\[
\gr_G \gr_F \left(\hatL (A) \coprod_{\hatL\C (\mathfrak{g})} \mathfrak{g}\right)= \hatL (\gr_G \gr_F A) \coprod_{\hatL\C (\gr_F \mathfrak{g})} \gr_F \mathfrak{g}.
\]
Therefore, $\gr_G \gr_F A$ and
\[
\gr_G \gr_F (\hatL (A) \coprod_{\hatL\C (\mathfrak{g})} \mathfrak{g})
\]
each have two gradings, $n$ and $j$, coming from the filtrations $F$ and $G$, respectively. A final filtration $H$ is defined by the rules $H_t \gr_G \gr_F A=\bigoplus_{t\geq n,j=0} (\gr_G \gr_F A)_{n,j}\oplus\bigoplus_{n\geq 1,j\geq 0} (\gr_G \gr_F A)_{n,j}$. The filtration $H$ induces a filtration upon $\hatL(\gr_F A)\coprod_{\hatL \mathcal{C}(\gr_F \mathfrak{g})}\mathfrak{g}$ and, therefore,
\[
\gr_H \gr_G \gr_F \left(\hatL (A) \coprod_{\hatL\C (\mathfrak{g})} \mathfrak{g}\right)= \hatL (\gr_H \gr_G \gr_F A) \coprod_{\hatL\C (\gr_F \mathfrak{g})} \gr_F \mathfrak{g}.
\]
Now, $\hatL (\gr_H \gr_G \gr_F A)$ freely generated by $\hatL \C (\gr_F \mathfrak{g})$ and an acyclic differential graded vector space. Moreover,
\[
\gr_H \gr_G \gr_F (\hatL (A) \coprod_{\hatL\C (\mathfrak{g})} \mathfrak{g})
\]
is freely generated by $\gr_F \mathfrak{g}$ and the same acyclic differential graded vector space. Since $\hatL\C (\gr_F \mathfrak{g})\to \gr_F \mathfrak{g}$ is a quasi-isomorphism, so is
\[
\hatL (\gr_H \gr_G \gr_F A)\to \gr_H \gr_G \gr_F (\hatL (A) \coprod_{\hatL\C (\mathfrak{g})} \mathfrak{g})
\]
and hence the statement has been proven.
\end{proof}

\begin{lemma}\label{lemma_factorise}
Given a morphism in the category of curved Lie algebras, $(f,\alpha)\colon \mathfrak{g}\to\mathfrak{h}$, it can be factorised as the composition of
\begin{itemize}
\item a cofibration followed by an acyclic fibration; and
\item an acyclic cofibration followed by a fibration.
\end{itemize} 
\end{lemma}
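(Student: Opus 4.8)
The plan is to transport Hinich's factorisations on $\mathscr{A}$ across the adjunction of Proposition~\ref{prop_adjoint}, repairing the endpoints by means of the Key Lemma (Lemma~\ref{lemma_Key}). Write $\phi=(f,\alpha):\mathfrak{g}\to\mathfrak{h}$ and apply $\C$ to get $\C(\phi):\C(\mathfrak{h})\to\C(\mathfrak{g})$ in $\mathscr{A}$. Using the model structure of \cite{hinich_homo_alg} I would factor this as $\C(\mathfrak{h})\xrightarrow{s}A\xrightarrow{t}\C(\mathfrak{g})$, taking $s$ an acyclic cofibration and $t$ a fibration in order to produce the first factorisation, and $s$ a cofibration and $t$ an acyclic fibration for the second. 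In both cases $t$ is surjective (fibrations in Hinich's structure are the surjections), so Lemma~\ref{lemma_Key} applies to the pushout $P:=\hatL(A)\coprod_{\hatL\C(\mathfrak{g})}\mathfrak{g}$ formed along $\hatL(t)$ and the natural weak equivalence $i_\mathfrak{g}$, yielding that $j:\hatL(A)\to P$ is a weak equivalence.

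Next I would exhibit a factorisation $\mathfrak{g}\xrightarrow{\iota}P\xrightarrow{\psi}\mathfrak{h}$, where $\iota$ is the pushout leg coming from $\mathfrak{g}$. The map $\psi$ is obtained from the universal property of $P$ applied to $\phi:\mathfrak{g}\to\mathfrak{h}$ and $i_\mathfrak{h}\circ\hatL(s):\hatL(A)\to\mathfrak{h}$; these agree on $\hatL\C(\mathfrak{g})$ precisely by naturality of the transformation $i$, since $\hatL(s)\circ\hatL(t)=\hatL\C(\phi)$ and $i_\mathfrak{h}\circ\hatL\C(\phi)=\phi\circ i_\mathfrak{g}$. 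By construction $\psi\circ j=i_\mathfrak{h}\circ\hatL(s)$ and $\psi\circ\iota=\phi$, so this is a genuine factorisation of $\phi$.

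It then remains to pin down the model-theoretic types of $\iota$ and $\psi$. As $\iota$ is the cobase change of $\hatL(t)$, and $\hatL$ interchanges (acyclic) fibrations and (acyclic) cofibrations by Lemma~\ref{lemma_swaps_cofibs_and_fibs}, closure of (acyclic) cofibrations under pushout shows that $\iota$ is a cofibration when $t$ is a fibration and an acyclic cofibration when $t$ is an acyclic fibration. For $\psi$, the identity $\psi\circ j=i_\mathfrak{h}\circ\hatL(s)$ together with $j$ being a weak equivalence reduces the weak-equivalence question for $\psi$ to that of $i_\mathfrak{h}\circ\hatL(s)$; since $i_\mathfrak{h}$ is a weak equivalence and $\hatL$ preserves weak equivalences by Proposition~\ref{prop_C_preserves_weak}, the two-out-of-three property makes $\psi$ a weak equivalence exactly when $s$ is, which holds in the first factorisation.

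The main obstacle is showing that $\psi$ is a fibration, i.e. that its underlying graded Lie algebra morphism is surjective, in both cases. I would deduce this directly from the presentation: $s$ is a (possibly acyclic) cofibration, hence a monomorphism of cdgas, and $\hatL$ sends monomorphisms to epimorphisms (as recorded in the proof of Lemma~\ref{lemma_swaps_cofibs_and_fibs}), so $\hatL(s)$ is surjective; moreover $i_\mathfrak{h}$ is surjective because it restricts to (a sign of) the identity on the linear generators $\mathfrak{h}\subseteq\hatL\C(\mathfrak{h})$. Hence $\operatorname{im}\psi\supseteq\operatorname{im}(\psi\circ j)=\operatorname{im}(i_\mathfrak{h}\circ\hatL(s))=\operatorname{im}(i_\mathfrak{h})=\mathfrak{h}$, so $\psi$ is a fibration. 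Assembling these facts gives a cofibration followed by an acyclic fibration in the first case, and an acyclic cofibration followed by a fibration in the second; the morphisms out of the formal initial object require only a trivial separate check.
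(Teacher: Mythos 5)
Your proposal is correct and follows essentially the same route as the paper's proof: factor $\C(f,\alpha)$ in Hinich's model structure on $\mathscr{A}$, apply $\hatL$, push out along $i_\mathfrak{g}$, invoke Lemma~\ref{lemma_Key} to see that $j$ is a weak equivalence, and identify the two legs $\iota$ and $\psi=\tilde{p}$ using the same surjectivity and two-out-of-three arguments. The one step to adjust is your appeal to closure of \emph{acyclic} cofibrations under pushout, which is not yet available while the model structure is still being established; acyclicity of $\iota$ in the second case instead follows from two-out-of-three applied to the commuting square $\tilde{\imath}\circ\hatL(t)=\iota\circ i_\mathfrak{g}$, since $\hatL(t)$, $i_\mathfrak{g}$ and (by Lemma~\ref{lemma_Key}) $\tilde{\imath}$ are all weak equivalences---which is precisely how the paper argues.
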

\begin{proof}
To proceed, first consider the induced morphism $\mathcal{C}(f,\alpha)\colon \mathcal{C}(\mathfrak{h})\to\mathcal{C}(\mathfrak{g})$. Since $\mathscr{A}$ is a model category it is possible factorise this morphism as
\begin{center}
\begin{tikzpicture}
\matrix (m) [matrix of math nodes, column sep=2.5em]
{
\mathcal{C}(\mathfrak{h}) & A & \mathcal{C}(\mathfrak{g}) \\
};
\path[->]
(m-1-1) edge node[above] {$j$} (m-1-2)
(m-1-2) edge node[above] {$p$} (m-1-3);
\end{tikzpicture},
\end{center}
where $j$ is a cofibration and $p$ is a fibration in the category of unital cdga. Further, it is possible to choose either of the morphisms, $j$ and $p$, to be a weak equivalence and doing so will specialise to one of the statements in the lemma. This factorisation in turn induces morphisms
\begin{center}
\begin{tikzpicture}
\matrix (m) [matrix of math nodes, column sep=2.5em]
{
\hatL\mathcal{C}(\mathfrak{g}) & \hatL (A) & \hatL\mathcal{C}(\mathfrak{h}) \\
};
\path[->]
(m-1-1) edge node[above] {$\hatL (p)$} (m-1-2)
(m-1-2) edge node[above] {$\hatL (j)$} (m-1-3);
\end{tikzpicture}
\end{center}
of curved Lie algebras. Therefore, taking the pushout
\begin{center}
\begin{tikzpicture}
\matrix (m) [matrix of math nodes, column sep=2.5em,row sep=2.5em]
{
\hatL\mathcal{C}(\mathfrak{g}) & \hatL (A) \\
\mathfrak{g} & \hatL(A)\coprod_{\hatL\mathcal{C}(\mathfrak{g})} \mathfrak{g}, \\
};
\path[->]
(m-1-1) edge node[above] {$\hatL (p)$} (m-1-2)
(m-1-1) edge node[left] {$i_\mathfrak{g}$} (m-2-1)
(m-1-2) edge node[right] {$(\tilde{\iota},\gamma )$} (m-2-2)
(m-2-1) edge node[below] {$(\iota,\beta )$} (m-2-2);
\end{tikzpicture}
\end{center}
it can be seen that the morphism $(\iota,\beta)\colon \mathfrak{g}\to\hatL(A)\coprod_{\hatL\mathcal{C}(\mathfrak{g})} \mathfrak{g}$ is a cofibration, since it is obtained from a cobase change of the cofibration $\hatL (p)$. Further, there exists a morphism $(\rho ,\epsilon) \colon \hatL(A)\coprod_{\hatL\mathcal{C}(\mathfrak{g})} \mathfrak{g}\to\mathfrak{h}$ coming from the universal property of a pushout:
\begin{center}
\begin{tikzpicture}
\matrix (m) [matrix of math nodes, column sep=2.5em,row sep=2.5em]
{
\hatL\mathcal{C}(\mathfrak{g}) & \hatL (A) & \\
\mathfrak{g} & \hatL(A)\coprod_{\hatL\mathcal{C}(\mathfrak{g})} \mathfrak{g} & \\
 & & \mathfrak{h}. \\
};
\path[->]
(m-1-1) edge node[above] {$\hatL (p)$} (m-1-2)
(m-1-1) edge node[left] {$i_\mathfrak{g}$} (m-2-1)
(m-1-2) edge node[right] {$(\tilde{\iota},\gamma )$} (m-2-2)
(m-2-1) edge node[below] {$(\iota,\beta)$} (m-2-2)
(m-2-1) edge[bend right] node[below] {$(f,\alpha )$} (m-3-3)
(m-1-2) edge[bend left] node[auto] {$i_\mathfrak{h}\circ\hatL (j)$} (m-3-3)
(m-2-2) edge[dashed] node[auto,xshift=-1em] {$(\rho,\epsilon )$} (m-3-3);
\end{tikzpicture}
\end{center}
Hence, $(\rho,\epsilon )$ is a fibration. Moreover, Lemma \ref{lemma_Key} shows that $(\tilde{\iota},\gamma )$ is a weak equivalence.

The composition $(f,\alpha )=(\rho,\epsilon )\circ(\iota,\beta)$ provides the desired decompositions depending upon the choice of weak equivalence in the original decomposition.

For the first statement, it is necessary to show that $(\rho,\epsilon )$ is a weak equivalence. It is, therefore, sufficient to show that $i_\mathfrak{h}\circ\hatL (j)$ is a weak equivalence, since it then follows from the two of three property. To this end, choose $j$ in the original factorisation to be a weak equivalence and the result follows.

For the second factorisation, choose $p$ to be the weak equivalence in the original factorisation. This ensures that $(\iota,\beta )$ is a weak equivalence because the rest of the morphisms in the commutative square $\hatL (p)$, $i_\mathfrak{g}$, and $(\tilde{\iota},\gamma )$ are weak equivalences.
\end{proof}

\begin{remark}
It is possible that the factorisations proven to exist in Lemma \ref{lemma_factorise} could be given more directly. One (standard) method would be to define analogues of the $n$-disk and $n$-sphere in the category $\curvedLie_*$. The author notes he has not investigated this approach.
\end{remark}

One lift is given by the definition of the classes of morphism in the model structure. Therefore, it is only necessary to prove that all cofibrations have the left lifting property with respect to all acyclic fibrations.

\begin{lemma}
Given a commutative diagram of the form

\begin{center}
\begin{tikzpicture}
\matrix (m) [matrix of math nodes,column sep=2.5em,row sep=2.5em]
{
\mathfrak{g} & \mathfrak{a} \\
\mathfrak{h} & \mathfrak{b}, \\
};
\path[->]
(m-1-1) edge (m-1-2)
(m-1-1) edge node[left] {$(f,\alpha )$} (m-2-1)
(m-1-2) edge node[right] {$(\phi ,\beta )$} (m-2-2)
(m-2-1) edge (m-2-2);
\end{tikzpicture}
\end{center}
where $f$ is an acyclic cofibration and $\phi$ is a fibration, there exists a morphism (or lift) $\mathfrak{h}\to\mathfrak{a}$ such that the diagram still commutes, i.e.~acyclic cofibrations have the LLP with respect to all fibrations.
\end{lemma}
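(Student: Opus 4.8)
The plan is to prove this by the standard retract argument, reducing the lifting property for an arbitrary acyclic cofibration to that of the specific acyclic cofibrations manufactured by the factorisation of Lemma \ref{lemma_factorise}, and then to discharge that remaining case by transporting the lifting problem across the adjunction $\hatL\dashv\C$ of Proposition \ref{prop_adjoint}. I would begin by recalling the two purely formal closure properties that make this strategy work: the class of morphisms possessing the LLP with respect to a fixed collection of maps is closed under cobase change and under retracts. Granting these, it suffices to show that each acyclic cofibration $(f,\alpha):\mathfrak{g}\to\mathfrak{h}$ is a retract (in the arrow category) of a morphism $\iota$ that itself has the LLP with respect to all fibrations.

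Such an $\iota$ is exactly what the second factorisation of Lemma \ref{lemma_factorise} provides. Factoring $\C(f,\alpha)$ in $\mathscr{A}$ as $i$ followed by $p$ with $p$ chosen to be the acyclic fibration, one forms the cobase change $\iota:\mathfrak{g}\to\hatL(A)\coprod_{\hatL\C(\mathfrak{g})}\mathfrak{g}$ of $\hatL(p)$ along $i_\mathfrak{g}$, together with a fibration $\tilde{p}$ onto $\mathfrak{h}$ satisfying $(f,\alpha)=\tilde{p}\circ\iota$. Since both $(f,\alpha)$ and $\iota$ are weak equivalences, two-out-of-three forces $\tilde{p}$ to be an acyclic fibration; as $(f,\alpha)$ is a cofibration it lifts against $\tilde{p}$, and the resulting section $s:\mathfrak{h}\to\hatL(A)\coprod_{\hatL\C(\mathfrak{g})}\mathfrak{g}$, with $s\circ(f,\alpha)=\iota$ and $\tilde{p}\circ s=\id_\mathfrak{h}$, displays $(f,\alpha)$ as a retract of $\iota$. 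By the cobase-change closure, $\iota$ inherits the LLP with respect to all fibrations from $\hatL(p)$, so the entire problem collapses to showing that $\hatL(p)$ lifts against every fibration.

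For this last step I would transport the lifting square across the contravariant adjunction $\Hom_\mathscr{A}(\C(-),-)\cong\Hom_{\curvedLie}(\hatL(-),-)$. Given any fibration $(\psi,\gamma):\mathfrak{a}\to\mathfrak{b}$ and a square with $\hatL(p)$ on the left and $(\psi,\gamma)$ on the right, the adjunction turns it into a square in $\mathscr{A}$ whose left edge is $\C(\psi,\gamma)$ and whose right edge is $p$; a lift there adjoints back to the required lift upstairs. Now $p$ is an acyclic fibration by construction, and $\C(\psi,\gamma)$ is a cofibration because $(\psi,\gamma)$ is surjective: the computation in the proof of Lemma \ref{lemma_swaps_cofibs_and_fibs} that exhibits $\C$ of a surjection as a composite of standard cofibrations uses only the surjectivity (via the tower $\mathfrak{g}/(\mathfrak{g}_n\cap K)$ and the maps $\C(\pi_n)$), not the acyclicity. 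Hence the transferred square has a lift by the model structure on $\mathscr{A}$, and the argument is complete.

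I expect the main obstacle to be the careful bookkeeping of the contravariant adjunction: one must verify that the unit and counit genuinely carry the commuting square, and a prospective lift, back and forth, rather than merely matching the four maps as sets. The only substantive ingredient not already recorded is the observation that $\C$ sends fibrations (that is, surjections) to cofibrations, and I would isolate this as a short remark refining the proof of Lemma \ref{lemma_swaps_cofibs_and_fibs} before invoking it.
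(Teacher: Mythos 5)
Your proposal is correct and follows essentially the same route as the paper: factor $(f,\alpha)$ via Lemma \ref{lemma_factorise}, exhibit it as a retract of the cobase change of $\hatL(p)$, and then solve the remaining lifting problem for $\hatL(p)$ against an arbitrary fibration by transporting the square across the adjunction, using that $p$ is an acyclic fibration and that $\C$ of a surjection is a cofibration. Your explicit remark that the cofibrancy of $\C(\psi,\gamma)$ uses only surjectivity (not acyclicity) of the fibration is a point the paper leaves implicit, but the argument is otherwise identical.
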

\begin{proof}
First, using the proof of Lemma \ref{lemma_factorise} (and borrowing notation from the proof) one has $(f,\alpha )=(\rho,\epsilon )\circ(\iota,\beta)$, where $(\rho,\epsilon )\colon \hatL(A)\coprod_{\hatL\mathcal{C}(\mathfrak{g})} \mathfrak{g}\to\mathfrak{h}$ is an acyclic fibration and $(\iota,\beta) \colon \mathfrak{g}\to\hatL(A)\coprod_{\hatL\mathcal{C}(\mathfrak{g})} \mathfrak{g}$ is an acyclic cofibration. Note that both $(\rho,\epsilon )$ and $(\iota,\beta)$ are acyclic by the $2$ of $3$ property.

Since $(f,\alpha )$ is a cofibration it has, by definition, the LLP with respect to all acyclic fibrations. In particular, the following commutative diagram exists:
\begin{center}
\begin{tikzpicture}
\matrix (m) [matrix of math nodes, column sep=2.5em, row sep=2.5em]
{
\mathfrak{g} & \hatL(A)\coprod_{\hatL\mathcal{C}(\mathfrak{g})} \mathfrak{g} \\
\mathfrak{h} & \mathfrak{h}, \\
};
\path[->]
(m-1-1) edge node[above] {$(\iota ,\beta) $} (m-1-2)
(m-1-1) edge node[left] {$(f,\alpha )$} (m-2-1)
(m-2-1) edge[double, -] (m-2-2)
(m-2-1) edge node[auto,xshift=0.3em,yshift=-0.3em] {$(h,\zeta)$} (m-1-2)
(m-1-2) edge node[right] {$(\rho,\epsilon )$} (m-2-2);
\end{tikzpicture}
\end{center}
which implies that $(f,\alpha )$ is a retract of $(\iota ,\beta)$, as the following diagram shows:
\begin{center}
\begin{tikzpicture}
\matrix (m) [matrix of math nodes, column sep=2.5em, row sep=2.5em]
{
\mathfrak{g} & \mathfrak{g} & \mathfrak{g} \\
\mathfrak{h} & \hatL(A)\coprod_{\hatL\mathcal{C}(\mathfrak{g})} \mathfrak{g}\to\mathfrak{h} & \mathfrak{h}. \\
};
\path[->]
(m-1-1) edge[double,-] (m-1-2)
(m-1-2) edge[double,-] (m-1-3)
(m-1-1) edge node[left] {$(f,\alpha )$} (m-2-1)
(m-2-1) edge node[below] {$(h,\zeta)$} (m-2-2)
(m-2-2) edge node[below] {$(\rho,\gamma)$} (m-2-3)
(m-1-2) edge node[right] {$(\iota,\beta )$} (m-2-2)
(m-1-3) edge node[right] {$(f,\alpha )$} (m-2-3);
\end{tikzpicture}
\end{center}
Therefore, if $(\iota,\beta)$ has the LLP with respect to all fibrations, so must $(f,\alpha )$. Since $(\iota,\beta )$ is obtained by a cobase change of $\hatL(p)$, it suffices to show that $\hatL(p)$ has the LLP with respect to all fibrations. To this end, begin with the following diagram:
\begin{center}
\begin{tikzpicture}
\matrix (m) [matrix of math nodes, column sep=2.5em, row sep=2.5em]
{
\hatL\mathcal{C}(\mathfrak{g}) & \mathfrak{x} \\
\hatL(A) & \mathfrak{y}, \\
};
\path[->]
(m-1-1) edge (m-1-2)
(m-1-1) edge node[left] {$\hatL(p)$} (m-2-1)
(m-2-1) edge (m-2-2)
(m-1-2) edge node[right] {$(\varphi,\kappa )$} (m-2-2);
\end{tikzpicture}
\end{center}
where $(\varphi,\kappa )$ is a fibration of curved Lie algebras. Using the adjunction between the contravariant functors $\mathcal{C}$ and $\hatL$ (Proposition \ref{prop_adjoint}), finding a lift in the above diagram is equivalent to finding a lift in the following diagram:
\begin{center}
\begin{tikzpicture}
\matrix (m) [matrix of math nodes, column sep=2.5em, row sep=2.5em]
{
\mathcal{C}(\mathfrak{y}) & A \\
\mathcal{C}(\mathfrak{x}) & \mathcal{C}(\mathfrak{g}). \\
};
\path[->]
(m-1-1) edge (m-1-2)
(m-1-1) edge node[left] {$\mathcal{C}(\varphi,\kappa)$} (m-2-1)
(m-1-2) edge node[right] {$p$} (m-2-2)
(m-2-1) edge (m-2-2);
\end{tikzpicture}
\end{center}
There exists a lift $H\colon \mathcal{C}(\mathfrak{x})\to A$ in this diagram because $p$ is an acyclic fibration, $\mathcal{C}(\varphi,\kappa)$ is a cofibration (see Lemma \ref{lemma_swaps_cofibs_and_fibs}), and the category $\mathscr{A}$ is a model category. Thus, the proof is complete.
\end{proof}

\begin{theorem}\label{thm_main}
The category of curved Lie algebras with curved morphisms defines a model category with the model structure defined in Definition \ref{def_model_structure_of_curved_Lie}. Moreover, it is Quillen equivalent to the model category of unital commutative differential graded algebras.
\end{theorem}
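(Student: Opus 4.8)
The plan is to verify directly that $\curvedLie_*$ satisfies the axioms of a closed model category, after which the Quillen equivalence with unital cdgas requires essentially no further work: as the text observes, by Theorem 9.7 of \cite{dwyer_spalinski} it suffices to combine the adjunction of Proposition \ref{prop_adjoint}, the fact that $\hatL$ interchanges (acyclic) fibrations and (acyclic) cofibrations (Lemma \ref{lemma_swaps_cofibs_and_fibs} together with Proposition \ref{prop_C_preserves_weak}), and the fact that the natural maps $i_A$ and $i_\mathfrak{g}$ are weak equivalences on every object. So the real content is the assembly of the model axioms.

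First I would dispatch the axioms established elsewhere. Existence of all small limits and colimits was shown in the subsection on limits and colimits, once a formal initial object is adjoined to form $\curvedLie_*$. The factorisation axiom---every morphism factors as a cofibration followed by an acyclic fibration, and as an acyclic cofibration followed by a fibration---is exactly Lemma \ref{lemma_factorise}. For the lifting axiom, one half (cofibrations have the LLP against acyclic fibrations) holds by the definition of cofibration, while the other half (acyclic cofibrations have the LLP against fibrations) is the final lemma proved above. Fibrations and cofibrations are closed under retracts automatically, being defined respectively as surjections and as maps with a left lifting property.

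The one axiom deserving genuine care is the retract stability (and the two-out-of-three property) of weak equivalences, since these were defined indirectly as the minimal class of filtered quasi-isomorphisms closed under two-out-of-three---recall that no claim was made that filtered quasi-isomorphisms compose. The key step I would carry out is to establish the intrinsic characterisation that a morphism $(f,\alpha):\mathfrak{g}\to\mathfrak{h}$ is a weak equivalence if, and only if, $\C(f,\alpha)$ is a quasi-isomorphism in $\mathscr{A}$. The forward direction is immediate from Proposition \ref{prop_C_preserves_weak} and closure under two-out-of-three. For the converse, if $\C(f,\alpha)$ is a quasi-isomorphism then $\hatL\C(f,\alpha)$ is a filtered quasi-isomorphism by the converse half of Proposition \ref{prop_C_preserves_weak}, and the naturality of $i$,
$$(f,\alpha)\circ i_\mathfrak{g} = i_\mathfrak{h}\circ\hatL\C(f,\alpha),$$
forces $(f,\alpha)$ to be a weak equivalence by two-out-of-three, since $i_\mathfrak{g}$ and $i_\mathfrak{h}$ are weak equivalences. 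Granted this characterisation, two-out-of-three and retract stability of weak equivalences follow at once, because quasi-isomorphisms in $\mathscr{A}$ enjoy both properties and $\C$ preserves retracts.

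With all five axioms verified, $\curvedLie_*$ is a closed model category and the Quillen equivalence follows from the cited Theorem 9.7 of \cite{dwyer_spalinski}. The main obstacle I anticipate is purely the bookkeeping forced by contravariance: both $\hatL$ and $\C$ reverse arrows, so the correct Quillen-functor statement is that $\hatL$ sends cofibrations to fibrations (equivalently, one works in $\curvedLie_*^{\mathrm{op}}$, where fibrations and cofibrations are interchanged), and one must be sure the weak-equivalence hypothesis on the adjunction maps is invoked on the appropriate cofibrant or fibrant objects. Since $i_A$ and $i_\mathfrak{g}$ are in fact weak equivalences on all objects---strictly more than required---this last check is routine, the genuinely hard analytic input having already been isolated in the Key Lemma \ref{lemma_Key}.
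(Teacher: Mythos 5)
Your proposal follows the same route as the paper: the paper's ``proof'' of Theorem \ref{thm_main} is exactly the assembly you describe---small limits and colimits from the first subsection, factorisation from Lemma \ref{lemma_factorise}, one lifting property by the definition of cofibration and the other from the final lemma, and then Theorem 9.7 of \cite{dwyer_spalinski} together with Lemma \ref{lemma_swaps_cofibs_and_fibs}, Proposition \ref{prop_C_preserves_weak} and the weak-equivalence properties of $i_A$ and $i_\mathfrak{g}$ for the Quillen equivalence. Where you genuinely add something is the retract axiom for weak equivalences, which the paper never addresses even though MC3 requires it: your intrinsic characterisation---$(f,\alpha)$ is a weak equivalence if and only if $\C(f,\alpha)$ is a quasi-isomorphism---is correct as you argue it. The forward implication holds because the class $\{(f,\alpha) : \C(f,\alpha)\text{ is a quasi-isomorphism}\}$ contains all filtered quasi-isomorphisms and is closed under two-out-of-three, hence contains the minimal such class; the converse follows from the naturality square $(f,\alpha)\circ i_\mathfrak{g}=i_\mathfrak{h}\circ\hatL\C(f,\alpha)$ and two applications of two-out-of-three. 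This characterisation then delivers both retract closure and a concrete handle on the otherwise indirectly defined class of weak equivalences, and it also makes transparent why the adjunction is a Quillen equivalence (the weak equivalences upstairs are precisely the morphisms inverted by $\C$). So your write-up is a strictly more careful version of the paper's argument rather than a different one; the only caution is the contravariance bookkeeping you already flag, namely that the Quillen pair must be read between $\mathscr{A}$ and $\curvedLie_*^{\mathrm{op}}$ so that Lemma \ref{lemma_swaps_cofibs_and_fibs} states the correct preservation properties.
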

\begin{proof}
It follows from the preceding results.
\end{proof}

\section{Unbased rational homotopy theory}\label{sec_rat_homo}

For the remainder of the paper it is assumed that the ground field is the rational numbers, $\mathbb{Q}$. Within this section, results contained in \cite{bousfield_gugenheim} and \cite{laz_markl} will be used alongside the Quillen equivalence of Theorem \ref{thm_main} to construct a disconnected rational homotopy theory using the category of pseudo-compact curved Lie algebras.

The category of simplicial sets will be denoted by $\mathscr{S}$. Recall that, the category $\mathscr{S}$ possesses a well known model structure where the weak equivalences are weak homotopy equivalences. More details can be found, for instance, in \cite{bousfield_gugenheim} and \cite{dwyer_spalinski}. With this model structure all simplicial sets are cofibrant and the fibrant objects are the Kan complexes.

First some definitions are recalled.

\begin{definition}
A group, $G$, is uniquely divisible if the equation $x^r = g$ has a unique solution $x\in  G$ for each $g \in G$ and $r>1$.
\end{definition}

\begin{definition}
A connected Kan complex, $X$, is said to be
\begin{itemize}
\item nilpotent if for any choice of base vertex
\begin{itemize}
\item its fundamental group, $\pi_1 (X)$, is nilpotent and
\item every other homotopy group, $\pi_i (X)$, is acted on nilpotently the fundamental group, in the sense that the sequence $G_{0,i}=\pi_i (X)$, $G_{k+1,i}\colon= \lbrace gn- n \colon n\in N_{k,i}, g\in\pi_1 (X) \rbrace$ terminates;
\end{itemize}
\item rational if each of its homotopy groups are uniquely divisible.
\end{itemize}
\end{definition}

\begin{definition}
A cdga is said to be connected if it is concentrated in non-negative degrees and equal to the ground field, $\mathbb{Q}$, in degree $0$. Similarly, a cdga $X$ is said to be homologically connected if $H^0(X)=\mathbb{Q}$ and $H^i (X)=0$ for all $i<0$. Extending this notion, a cdga $X$ is said to be homologically disconnected cdga if $H^0(X)$ is isomorphic to a finite product of copies of $\mathbb{Q}$ and $H^i(X)=0$ for all $i<0$.
\end{definition}

\begin{remark}
It is worth noting that a homologically disconnected cdga may in fact be homologically connected.
\end{remark}

\begin{definition}
Given some category $X(=\curvedLie_* ,\mathscr{A},\mathscr{S})$, let $\ho(X)$ denote the homotopy category of $X$. Given some category $X(=\mathscr{A},\mathscr{S})$, let $X^c$ denote the full subcategory of connected objects and let $X^{dc}$ denote the full subcategory of objects with finitely many connected components.
\end{definition}

For example, the objects of the category $\ho (\mathscr{S}^c)$ are connected Kan complexes.

\begin{definition}
A cdga $A\in\mathscr{A}$ is called a Sullivan algebra if its underlying graded algebra is the free commutative graded algebra $\wedge V$ on a graded vector space $V$ satisfying the following: $V$ is the union of an increasing series of graded subspaces $V_0 \subset V_1 \subset \dots$, with $d\equiv 0$ on $V_0$ and $d(V_k)$ is contained in $\wedge (V_{k-1})$.

A Sullivan algebra is called minimal if the image of the differential $d$ is contained in $\wedge^+ (V)^2$, where $\wedge^+ (V)$ is the direct sum of the positive degree subspaces of $\wedge (V)$.
\end{definition}

Recall that every non-negatively graded homologically connected cdga admits a minimal model, cf.~\cite[Section 7]{bousfield_gugenheim}. That is, every non-negatively graded homologically connected cdga is weakly equivalent to a minimal algebra. Moreover, the minimal algebra is unique up to isomorphism.

\begin{definition}\hfill
\begin{itemize}
\item A cofibrant homologically connected cdga is said to be of finite type over $\mathbb{Q}$ if its minimal model has finitely many generators in each degree.
\item A nilpotent connected Kan complex is said to be of finite type over $\mathbb{Q}$ if its homology groups with coefficients in $\mathbb{Q}$ are finite dimensional vector spaces over $\mathbb{Q}$.
\end{itemize}
\end{definition}

The adjective `finite type' will be understood to mean `finite type over $\mathbb{Q}$'.

\begin{definition}
The prefix $\mathrm{fN}\mathbb{Q}\mathrm{-}$ applied to a category of simplicial sets will denote the full subcategory composed of rational, nilpotent objects of  finite type over $\mathbb{Q}$, and the prefix $\mathrm{f}\mathbb{Q}\mathrm{-}$ applied to a category of algebras will denote the full subcategory of objects of finite type over $\mathbb{Q}$.
\end{definition}

For example, the category $\mathrm{fN}\mathbb{Q}\mathrm{-}\ho(\mathscr{S}^c)$ denotes the full subcategory of $\ho(\mathscr{S^c})$ of rational, nilpotent Kan complexes of finite type.

Any non-negatively graded homologically disconnected cdga is isomorphic to a finite product of homologically connected cdgas, cf.~\cite[Theorem B]{laz_markl}. Further, the homologically disconnected cdga is said to be of finite type if each connected cdga in the finite product is of finite type, see Proposition 4.4 in op.~cit.

Let $\mathscr{A}_{\geq 0}$ denote the category of non-negatively graded cdgas. Recall, from \cite[Theorem 9.4]{bousfield_gugenheim}, there exists a pair of adjoint functors
\[
F\colon \mathscr{A}_{\geq 0} \leftrightarrows \mathscr{S} \colon  \Omega ,
\]
that induce an equivalence of the homotopy categories $\finA^c_{\geq 0}$ and $\finS^c$. This is the so-called Sullivan-de Rham equivalence. Here $\Omega$ is the de Rham functor \cite[Section 2]{bousfield_gugenheim} and $F$ is the functor given by $X\mapsto F(X,\mathbb{Q})$ where $F(X,\mathbb{Q})$ is the function space of \cite[Section 5.1]{bousfield_gugenheim} (called the Bousfield-Kan functor). Note there is also an analogue of this result in the pointed case contained in op.~cit. Combining the equivalence of these homotopy categories with \cite[Proposition 3.5]{laz_markl}---where the authors extend the existence of minimal models to arbitrary homologically connected cdgas to prove the categories $\ho \mathscr{A}^c_{\geq 0}$ and $\ho\mathscr{A}^c$ are equivalent---it follows that the categories $\finS^c$ and $\finA^c$ are equivalent. Further, one has the following theorem of Lazarev and Markl \cite[Theorem C]{laz_markl}.

\begin{theorem}[Lazarev-Markl]
The three categories $\finS^{dc}$, $\finA^{dc}_{\geq 0}$, and $\finA^{dc}$ are equivalent.
\end{theorem}

It will be the aim of the rest of this section to describe a subcategory $\finL^{dc}$ of $\ho (\curvedLie_* )$ equivalent to the categories $\finS^{dc}$, $\finA^{dc}_{\geq 0}$, and $\finA^{dc}$, extending the theorem of Lazarev and Markl.

\begin{proposition}
For any homologically connected cdga, $A$, the curved Lie algebra $\hatL (X)$ is weakly equivalent to a dgla, i.e.~a curved Lie algebra with zero curvature.
\end{proposition}
\begin{proof}
First note that any minimal algebra has a unique augmentation, thus being endowed with an augmentation means that this minimal algebra corresponds to a dgla under the contravariant functor $\hatL$. Therefore, there exists a filtered quasi-isomorphism $\hatL (A)\to\hatL (M_A)$ since the contravariant functor $\hatL$ preserves weak equivalences, where $M_A$ is a minimal model for $A$.
\end{proof}

\begin{proposition}
For any homologically disconnected cdga, $A$, the curved Lie algebra $\hatL (A)$ is weakly equivalent to a finite coproduct of dglas of the form $\hatL (M)$, for some minimal algebra $M$.
\end{proposition}
\begin{proof}
Since a homologically disconnected cdga, $A$, is a finite product of homologically connected ones up to isomorphism, $A$ is weakly equivalent to a finite product of minimal algebras. Hence $\hatL (A)$ is weakly equivalent to a finite coproduct of dglas of the form $\hatL (M)$.
\end{proof}

With this proposition in mind the following definitions are made.

\begin{definition}\label{def_dc_Lie}
The category $\finL^{c}$ is the full subcategory of $\ho(\curvedLie_*)$ with objects consisting of curved Lie algebras isomorphic to ones of the form $\hatL (M)$, where $M$ is a minimal cdga of finite type.

The category $\finL^{dc}$ is the full subcategory of $\ho(\curvedLie_*)$ with objects consisting of curved Lie algebras isomorphic to finite coproducts of objects in $\finL^{dc}$.
\end{definition}

The next result is now clear.

\begin{theorem}
The category $\finL^{dc}$ is equivalent to the categories $\finS^{dc}$, $\finA^{dc}_{\geq 0}$, and $\finA^{dc}$. \qed
\end{theorem}

To describe the equivalence in a more specific manner, first recall the definition of the Maurer-Cartan simplicial set.

\begin{definition}
The Maurer-Cartan simplicial set is given by the functor
\[
\MC_\bullet\colon \curvedLie_*\to\mathscr{S}
\]
that maps a pseudo-compact curved Lie algebra $\mathfrak{g}$ to the simplicial space of MC elements in $\mathfrak{g}\otimes \Omega (\Delta^\bullet )$, where $\Omega (\Delta^n)$ is the Sullivan-de Rham algebra of polynomial forms on the standard topological cosimplicial simplex considered as a homologically graded cdga.
\end{definition}

For more details on the Maurer-Cartan simplicial set see \cite{hamilton_laz_noncomm_geo,getzler,laz_markl}, for example. Note that the Sullivan-de Rham algebra of polynomial forms on the standard topological cosimplicial simplex must be considered as a homologically graded cdga so that the resulting object when tensored with a homologically curved Lie algebra is again a homologically curved Lie algebra.

\begin{proposition}
The functors $\MC_\bullet\colon \curvedLie_*\to\mathscr{S}$ and $\hatL\Omega\colon \mathscr{S}\to\curvedLie_*$ form an adjoint pair.
\end{proposition}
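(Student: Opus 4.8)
The plan is to exhibit, naturally in $X\in\mathscr{S}$ and $\mathfrak{g}\in\curvedLie_*$, a bijection
\[
\Hom_{\curvedLie_*}(\hatL\Omega(X),\mathfrak{g})\cong\Hom_\mathscr{S}(X,\MC_\bullet(\mathfrak{g})),
\]
thereby identifying $\hatL\Omega$ as a left adjoint of $\MC_\bullet$. The strategy is to show that both sides are naturally isomorphic to $\Hom_\mathscr{A}(\C(\mathfrak{g}),\Omega(X))$, the set of cdga morphisms from the Chevalley-Eilenberg algebra of $\mathfrak{g}$ into the de Rham forms on $X$. The left-hand side is immediate: applying Proposition \ref{prop_adjoint} (the adjunction $\hatL\dashv\C$) with $A=\Omega(X)$ gives $\Hom_{\curvedLie_*}(\hatL\Omega(X),\mathfrak{g})\cong\Hom_\mathscr{A}(\C(\mathfrak{g}),\Omega(X))$, naturally in $\mathfrak{g}$ and $X$.

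For the right-hand side the key ingredient is the representability of the Maurer-Cartan functor: for every unital cdga $A$ there should be a natural bijection
\[
\MC(\mathfrak{g}\hat{\otimes}A)\cong\Hom_\mathscr{A}(\C(\mathfrak{g}),A).
\]
I would prove this by unwinding definitions. Since $\C(\mathfrak{g})=S\Sigma\mathfrak{g}^*$ is free as a graded commutative algebra, a cdga morphism $\C(\mathfrak{g})\to A$ amounts to a degree-preserving linear map $\Sigma\mathfrak{g}^*\to A$ compatible with the differentials; dualising and using that $\mathfrak{g}$ is pseudo-compact (so $\mathfrak{g}^{**}\cong\mathfrak{g}$) identifies such a map with an element $\xi\in\mathfrak{g}\hat{\otimes}A$ of degree $-1$, and the three constituents of the Chevalley-Eilenberg differential---coming respectively from the curvature, the differential, and the bracket of $\mathfrak{g}$---translate precisely into the three terms $\omega$, $d\xi$, and $\tfrac12[\xi,\xi]$ of the Maurer-Cartan equation. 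Specialising to $A=\Omega(\Delta[n])=\Omega_n$ and recalling $\MC_\bullet(\mathfrak{g})_n=\MC(\mathfrak{g}\hat{\otimes}\Omega_n)$ then yields a levelwise natural isomorphism $\MC_\bullet(\mathfrak{g})_n\cong\Hom_\mathscr{A}(\C(\mathfrak{g}),\Omega_n)$.

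It then remains to globalise over $X$. Writing $X\cong\varinjlim_{\Delta[n]\to X}\Delta[n]$ as the canonical colimit of representables and using the Yoneda lemma $\Hom_\mathscr{S}(\Delta[n],\MC_\bullet(\mathfrak{g}))=\MC_\bullet(\mathfrak{g})_n$, one computes
\[
\Hom_\mathscr{S}(X,\MC_\bullet(\mathfrak{g}))\cong\varprojlim_{\Delta[n]\to X}\Hom_\mathscr{A}(\C(\mathfrak{g}),\Omega_n)\cong\Hom_\mathscr{A}\bigl(\C(\mathfrak{g}),\varprojlim_{\Delta[n]\to X}\Omega_n\bigr).
\]
Here the last step uses that $\Hom_\mathscr{A}(\C(\mathfrak{g}),-)$ preserves limits, while the de Rham functor $\Omega$ (given by $\Omega(X)=\Hom_\mathscr{S}(X,\Omega_\bullet)$, hence converting the colimit defining $X$ into a limit) satisfies $\varprojlim_{\Delta[n]\to X}\Omega_n\cong\Omega(X)$. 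Combining the two computations gives the desired natural bijection, naturality in $X$ and $\mathfrak{g}$ being inherited from each isomorphism in the chain; on the formally adjoined initial object the correspondence is fixed by its defining universal property, so nothing is lost there.

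I expect the main obstacle to be the representability isomorphism $\MC(\mathfrak{g}\hat{\otimes}A)\cong\Hom_\mathscr{A}(\C(\mathfrak{g}),A)$: one must control the sign and degree bookkeeping across the homological/cohomological and suspension conventions, verify that the curvature contribution to the Chevalley-Eilenberg differential genuinely produces the $\omega$ term of the Maurer-Cartan equation (this is exactly where the curved case departs from the dgla case, where $\C(\mathfrak{g})$ is augmented), and confirm naturality in both variables together with continuity relative to the pseudo-compact topology. Once this identity is secured, the remaining steps are formal category theory.
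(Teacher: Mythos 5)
Your proof is correct and follows essentially the same route as the paper: both factor the correspondence through $\Hom_\mathscr{A}(\C(\mathfrak{g}),\Omega(X))$, combining the adjunction $\hatL\dashv\C$ of Proposition \ref{prop_adjoint} with the Sullivan--de Rham adjunction. The only difference is one of packaging: where you unwind the levelwise representability $\MC(\mathfrak{g}\hat{\otimes}\Omega_n)\cong\Hom_\mathscr{A}(\C(\mathfrak{g}),\Omega_n)$ (this is exactly Proposition \ref{prop_iso_functors}, proved later in the paper) and the colimit decomposition of $X$ by hand, the paper simply cites the identification $\MC_\bullet(\mathfrak{g})\cong F\C(\mathfrak{g})$ together with the Bousfield--Gugenheim adjunction between $F$ and $\Omega$.
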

\begin{proof}
By definition, there exists an isomorphism of simplicial sets $\MC_\bullet (\mathfrak{g})\cong F \C (\mathfrak{g})$. Therefore, the functors are compositions in the following diagram:
\begin{center}
\begin{tikzpicture}
\matrix (m) [matrix of math nodes, column sep=2em, row sep=2em]
{
\curvedLie_* & \mathscr{A} & \mathscr{S} \\
};
\path[->]
(m-1-1) edge[transform canvas={yshift=2}] node[above] {$\C$} (m-1-2)
(m-1-2) edge[transform canvas={yshift=-2}] node[below] {$\hatL$} (m-1-1)
(m-1-2) edge[transform canvas={yshift=2}] node[above] {$F$} (m-1-3)
(m-1-3) edge[transform canvas={yshift=-2}] node[below] {$\Omega$} (m-1-2);
\end{tikzpicture}.
\end{center}
The contravariant functors $\hatL$ and $\C$ are adjoint by Theorem \ref{thm_main}, and the functors $F$ and $\Omega$ are adjoint by \cite{bousfield_gugenheim}, therefore the result follows.
\end{proof}

Further, it follows (from Proposition \ref{prop_adjoint} and \cite{bousfield_gugenheim}) that the composite functors $\MC_\bullet$ and $\hatL\Omega$ induce adjoint functors upon the homotopy categories of $\curvedLie_*$ and $\mathscr{S}$. Restricting to the categories $\finS^{dc}$ and $\finL^{dc}$ these functors induce mutually inverse equivalences, as the following results show.

\begin{proposition}\label{prop_foo}
Given any connected non-negatively graded cdga, $A$, of finite type, the Lie algebra $\hatL (A)$ is weakly equivalent to an object belonging to the category $\finL^{c}$.
\end{proposition}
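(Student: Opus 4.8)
The plan is to reduce everything to the minimal model of $A$ and transport it across the functor $\hatL$. Since $A$ is connected, non-negatively graded, and of finite type, the theory of minimal models recalled above (from \cite{bousfield_gugenheim}) furnishes a minimal cdga $M_A$ together with a quasi-isomorphism $\phi : M_A \to A$, and by the very definition of finite type this $M_A$ has finitely many generators in each degree. Being minimal and built on generators in positive degrees, $M_A$ is connected, and in particular it is augmented, its unique augmentation being the projection onto its degree-zero part $\mathbb{Q}$.

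First I would apply $\hatL$ to $\phi$. As $\hatL$ is contravariant this yields a morphism $\hatL(\phi) : \hatL(A) \to \hatL(M_A)$, and since $\phi$ is a quasi-isomorphism of $\mathscr{A}$, Proposition \ref{prop_C_preserves_weak} shows $\hatL(\phi)$ is a filtered quasi-isomorphism, hence a weak equivalence of $\curvedLie_*$. (Both $\hatL(A)$ and $\hatL(M_A)$ may here be realised as honest dglas, since the linear retractions can be chosen to be the respective augmentations, so that the $k$-parts of the structure maps, and thus the curvatures, vanish.) Consequently $\hatL(A)$ is weakly equivalent to $\hatL(M_A)$, i.e. the two become isomorphic in $\ho(\curvedLie_*)$.

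It then remains to verify that $\hatL(M_A)$ belongs to $\finL^c$. Using the equivalent description given just after the definition of $\finL^{dc}$---whose connected analogue $\finL^c$ consists of the curved Lie algebras $\hatL(M)$ for $M$ a connected minimal cdga of finite type---this is immediate once one knows, from the first step, that $M_A$ is minimal, connected, and of finite type. If instead one wishes to use the intrinsic characterisation (finite-dimensional homology of the associated graded in each degree), one filters $\hatL(M_A)$ by bracket length: minimality of $M_A$ makes its differential decomposable, so the induced differential on $\gr_F \hatL(M_A)$ vanishes on the generating space $\Sigma (M_A)_+^*$, and the homology of the associated graded in each degree is therefore controlled by the finitely many generators of $M_A$ in that degree.

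The hard part is precisely this last verification: confirming that the finite-type hypothesis on $A$ is inherited by $\hatL(M_A)$ in exactly the form demanded by the definition of $\finL^c$. Everything else is formal, since the existence of minimal models is imported from \cite{bousfield_gugenheim} and the preservation of weak equivalences by $\hatL$ is exactly Proposition \ref{prop_C_preserves_weak}. One should also take care that $M_A$, and hence its image $\hatL(M_A)$, is genuinely connected, so that the object produced lands in $\finL^c$ rather than merely in $\finL^{dc}$.
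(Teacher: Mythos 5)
Your argument follows the same route as the paper: pass to the minimal model $M_A$ of $A$, note that the quasi-isomorphism $M_A\to A$ is sent by $\hatL$ to a weak equivalence (Proposition \ref{prop_C_preserves_weak}), and observe that $\hatL(M_A)$ lies in $\finL^c$ by the finite-type and connectedness of $M_A$. The paper's own proof is just a terser version of this, so your additional verifications (augmentedness of $M_A$, the bracket-length filtration argument for the intrinsic characterisation) are correct elaborations rather than a different approach.
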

\begin{proof}
Since $A$ is connected there exists some unique minimal model, $M_A$, that is also of finite type. Therefore, since $A$ is quasi-isomorphic to $M_A$ the Lie algebras $\hatL (A)$ and $\hatL (M_A)$ are weakly equivalent.
\end{proof}

\begin{proposition}\label{prop_bar}
Given any curved Lie algebra $\mathfrak{g}\in\finL^{c}$ the cdga $\C (\mathfrak{g})$ is quasi-isomorphic to a connected non-negatively graded cdga of finite type.
\end{proposition}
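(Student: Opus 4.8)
The plan is to leverage the characterisation of $\finL^c$ together with the behaviour of the adjoint pair $\hatL\dashv\C$ on weak equivalences, reducing the statement to the already-established quasi-isomorphism $i_A:\C\hatL(A)\to A$.

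First I would invoke the equivalent description of $\finL^c$ recorded immediately after its definition: an object $\mathfrak{g}\in\finL^c$ is, in $\ho(\curvedLie_*)$, isomorphic to $\hatL(M)$ for some minimal connected cdga $M$ of finite type. Concretely this means $\mathfrak{g}$ and $\hatL(M)$ are joined by a finite zigzag of weak equivalences of $\curvedLie_*$. Since $M$ is a minimal model it is automatically non-negatively graded, connected (with $M^0=\mathbb{Q}$), and of finite type, so $M$ is already exactly the kind of cdga the statement asks for; the task is therefore to transport $\mathfrak{g}$ to $M$ after applying $\C$.

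Next I would show that $\C$ carries this zigzag to a zigzag of quasi-isomorphisms, so that $\C(\mathfrak{g})$ is quasi-isomorphic to $\C\hatL(M)$. The key point is that although the weak equivalences of $\curvedLie_*$ are defined as the minimal class of filtered quasi-isomorphisms closed under two-out-of-three, Proposition \ref{prop_C_preserves_weak} shows that $\C$ sends filtered quasi-isomorphisms to quasi-isomorphisms of $\mathscr{A}$. Consequently the class of morphisms $w$ with $\C(w)$ a quasi-isomorphism contains every filtered quasi-isomorphism and is closed under two-out-of-three (as quasi-isomorphisms are), so by minimality it contains every weak equivalence. Hence $\C$ preserves all weak equivalences and the zigzag descends as claimed.

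Finally I would apply the quasi-isomorphism $i_M:\C\hatL(M)\to M$ from the proposition preceding Lemma \ref{lemma_swaps_cofibs_and_fibs}, giving $\C\hatL(M)\simeq M$. Chaining the two chains of quasi-isomorphisms yields $\C(\mathfrak{g})\simeq M$, and since $M$ is a connected non-negatively graded cdga of finite type the proof is complete. The argument is essentially formal once the characterisation of $\finL^c$ is in hand; the only point requiring genuine care—and the step I expect to be the main (if mild) obstacle—is the two-out-of-three/minimality argument guaranteeing that $\C$ respects the entire class of weak equivalences rather than merely the filtered quasi-isomorphisms that generate it.
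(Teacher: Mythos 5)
Your proposal is correct and follows essentially the same route as the paper's own (much terser) proof: identify $\mathfrak{g}$ with $\hatL(M)$ for a minimal cdga $M$ of finite type, transport the weak equivalence through $\C$, and conclude via the quasi-isomorphism $\C\hatL(M)\to M$. Your explicit minimality/two-out-of-three argument that $\C$ preserves all weak equivalences (not merely the generating filtered quasi-isomorphisms) is a detail the paper leaves implicit, and it is a correct and worthwhile justification.
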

\begin{proof}
The Lie algebra will be weakly equivalent to one of the form $\hatL (M)$ for some minimal algebra $M$ of finite type, thus $\C\hatL (M)$ is quasi-isomorphic to $M$ and thus of the right form.
\end{proof}

\begin{theorem}\label{thm_rat_ho}
The functors $\MC_\bullet$ and $\hatL\Omega$ determine mutually inverse equivalences between the categories $\finL^{dc}$ and $\finS^{dc}$.
\end{theorem}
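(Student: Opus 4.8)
The plan is to obtain the equivalence by splicing together the two equivalences already available: the Koszul-type equivalence of Theorem \ref{thm_main}, implemented by $\C$ and $\hatL$, and the Sullivan--de Rham equivalence of \cite{bousfield_gugenheim}, implemented by $F$ and $\Omega$, with $\mathscr{A}$ serving as the common intermediate category. Since $\MC_\bullet\cong F\C$ and, by the preceding proposition, $(\hatL\Omega,\MC_\bullet)$ is an adjoint pair inducing an adjunction on homotopy categories, it suffices to verify that the unit $\eta_K:K\to\MC_\bullet\hatL\Omega(K)$ is an isomorphism in $\ho(\mathscr{S})$ for each $K\in\finS^{dc}$ and that the counit $\epsilon_\mathfrak{g}:\hatL\Omega\MC_\bullet(\mathfrak{g})\to\mathfrak{g}$ is an isomorphism in $\ho(\curvedLie_*)$ for each $\mathfrak{g}\in\finL^{dc}$.

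First I would treat the connected case. For $K\in\finS^c$ the cdga $\Omega(K)$ is homologically connected of finite type, so Proposition \ref{prop_foo} shows $\hatL\Omega(K)\in\finL^c$; using the natural quasi-isomorphism $\C\hatL(A)\to A$ and the Sullivan--de Rham equivalence one obtains
$$\MC_\bullet\hatL\Omega(K)=F\C\hatL\Omega(K)\simeq F\Omega(K)\simeq K.$$
Dually, for $\mathfrak{g}\in\finL^c$ Proposition \ref{prop_bar} provides a connected finite-type cdga $M$ with $\C(\mathfrak{g})\simeq M$, so that $\MC_\bullet(\mathfrak{g})=F\C(\mathfrak{g})\simeq F(M)\in\finS^c$, and using the weak equivalence $\hatL\C(\mathfrak{g})\to\mathfrak{g}$ one gets
$$\hatL\Omega\MC_\bullet(\mathfrak{g})\simeq\hatL\Omega F(M)\simeq\hatL(M)\simeq\mathfrak{g}.$$
Hence $\MC_\bullet$ and $\hatL\Omega$ are mutually inverse on the connected subcategories.

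Next I would record that $\hatL\Omega$ respects the decomposition into connected components. Writing $K=\coprod_i K_i$ with each $K_i\in\finS^c$, contravariance of the de Rham functor yields $\Omega(\coprod_i K_i)\cong\prod_i\Omega(K_i)$, and since $\hatL$ is a contravariant left adjoint (Proposition \ref{prop_adjoint}) it converts this finite product of cdgas into a finite coproduct of curved Lie algebras, so that $\hatL\Omega(\coprod_i K_i)\cong\coprod_i\hatL\Omega(K_i)$. Combined with the connected case this already yields essential surjectivity of $\hatL\Omega$ onto $\finL^{dc}$: every object of $\finL^{dc}$ is a finite coproduct $\coprod_i\hatL(M_i)$ of connected pieces, and each $\hatL(M_i)$ is weakly equivalent to some $\hatL\Omega(K_i)$ with $K_i\in\finS^c$.

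The hard part will be the complementary direction, namely showing that $\MC_\bullet=F\C$ is likewise compatible with the coproduct decomposition, equivalently that $\epsilon_\mathfrak{g}$ is a weak equivalence on a genuine finite coproduct and not merely on its summands. Here abstract adjointness offers no help: $\C$ is a right adjoint and $\coprod_i\mathfrak{g}_i$ is a colimit, so mapping into it does not formally decompose, and the substantive claim is that for the finite-type objects at hand $\C(\coprod_i\mathfrak{g}_i)$ is nonetheless quasi-isomorphic to the finite product $\prod_i\C(\mathfrak{g}_i)$ of connected cdgas, so that the derived $\C$ carries $\finL^{dc}$ into $\finA^{dc}$ in the sense of \cite[Theorem B]{laz_markl}. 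I would establish this by filtering each summand by bracket length and reducing to the Lie-algebraic counterpart of the topological fact that a connected object maps, up to homotopy, into a single coproduct factor. Granting this, full faithfulness of $\hatL\Omega$ follows from the connected computation together with the universal property of coproducts, since for $K=\coprod_i K_i$ and $L=\coprod_j L_j$ one computes
\begin{align*}
\Hom_{\ho(\curvedLie_*)}\big(\hatL\Omega K,\hatL\Omega L\big)
&\cong\prod_i\coprod_j\Hom_{\ho(\curvedLie_*)}\big(\hatL\Omega K_i,\hatL\Omega L_j\big)\\
&\cong\prod_i\coprod_j\Hom_{\ho(\mathscr{S})}(K_i,L_j)\cong\Hom_{\ho(\mathscr{S})}(K,L),
\end{align*}
matching the decomposition of mapping sets in $\ho(\mathscr{S})$. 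Being an adjoint pair that is fully faithful and essentially surjective, $\hatL\Omega$ and $\MC_\bullet$ are then mutually inverse equivalences between $\finS^{dc}$ and $\finL^{dc}$, as claimed.
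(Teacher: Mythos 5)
Your overall route coincides with the paper's: compose the equivalence of Theorem \ref{thm_main} with the Sullivan--de Rham equivalence through $\mathscr{A}$, settle the connected case using Propositions \ref{prop_foo} and \ref{prop_bar}, and then extend over finite (co)products. Your connected-case computation and the observation that $\hatL$, being a contravariant left adjoint, turns the finite product $\prod_i\Omega(K_i)$ into the finite coproduct $\coprod_i\hatL\Omega(K_i)$ are both correct.

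The gap is precisely at the step you flag as ``the hard part.'' You need (i) that the derived $\C$ carries $\coprod_i\mathfrak{g}_i$ to something quasi-isomorphic to $\prod_i\C(\mathfrak{g}_i)$, and, for the counit computation on $\coprod_i\mathfrak{g}_i$, also (ii) that the Bousfield--Kan functor $F$ carries a finite product of connected finite-type cdgas to the disjoint union of their realisations; you only sketch (i) (``filter by bracket length'') and do not mention (ii) at all. Neither is elementary: (ii) is essentially \cite[Theorem 1.7]{laz_markl}, and a direct attack on (i) would require analysing the Chevalley--Eilenberg complex of a free product with an adjoined Maurer--Cartan generator. The paper avoids both. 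For (i) it exploits that Theorem \ref{thm_main} gives mutually inverse equivalences of the \emph{full} homotopy categories, so full faithfulness of the restrictions to any full subcategories is automatic; all that remains is to match up objects, which follows from $\hatL(\prod_i M_i)\simeq\coprod_i\hatL(M_i)$ together with the definition of $\finL^{dc}$ as finite coproducts of objects of the form $\hatL(M)$. Your statement (i) then comes out as a \emph{consequence} (write $\mathfrak{g}_i\simeq\hatL(M_i)$ and apply $\C\hatL\simeq\id$) rather than being an input --- indeed the paper deduces $\MC_\bullet(\coprod_i\mathfrak{g}_i)\simeq\bigcup_i\MC_\bullet(\mathfrak{g}_i)$ as a corollary of the theorem, so deriving the theorem from that formula risks circularity unless your filtration argument is genuinely carried out. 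For (ii) the paper simply cites the disconnected Sullivan--de Rham equivalence of \cite[Theorem C]{laz_markl} to pass from $\finA^{dc}$ to $\finS^{dc}$.
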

\begin{proof}
First, note that the objects of the category $\finL^{dc}$ are, up to equivalence, coproducts of objects of the category $\finL^{c}$. Likewise the objects of the category $\finA^{dc}$ are, up to equivalence, products of objects of the category $\finA^{c}$. Hence to show there is an equivalence of $\finL^{dc}$ and $\finA^{dc}$ it is sufficient to show there is an equivalence of $\finL^{c}$ and $\finA^c$. By Propositions \ref{prop_foo} and \ref{prop_bar} the contravariant functors $\C$ and $\mathcal{L}$ restrict to an adjunction
\[
\C \colon \finL^{c} \leftrightarrows \finA^{c} \colon  \mathcal{L}.
\]
Since Theorem \ref{thm_main} shows that the contravariant functors $\C$ and $\hatL$ form a Quillen pair it is evident that their restrictions form mutually inverse equivalences of the categories $\finL^{c}$ and $\finA^{c}$. Whence the categories  $\finL^{dc}$ and $\finA^{dc}$ are equivalent. Combining this with \cite[Theorem C]{laz_markl} there exists equivalences of the categories
\[
\finL^{dc}\sim\finA^{dc}\sim\finA^{dc}_{\geq 0} \sim \finS^{dc},
\]
and the proof is complete.
\end{proof}

Since equivalences of categories preserve colimits and limits, an analogue of \cite[Theorem 1.7]{laz_markl} and its corollary can be explained here immediate consequences of Theorem \ref{thm_rat_ho}.

\begin{corollary}
Given $\coprod_{i\in I}\mathfrak{g}_i\in\finL^{dc}$, the simplicial set $\MC_\bullet (\coprod_{i\in I} \mathfrak{g}_i)$ is weakly equivalent to the disjoint union $\bigcup_{i\in I} \MC_\bullet ( \mathfrak{g}_i)$.
\end{corollary}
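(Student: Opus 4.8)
The plan is to deduce the statement formally from the equivalence of homotopy categories established in Theorem \ref{thm_rat_ho}, using nothing more than the fact that an equivalence of categories preserves all colimits, and in particular coproducts. Since $\MC_\bullet:\finL^{dc}\to\finS^{dc}$ is one half of a pair of mutually inverse equivalences, it carries the coproduct of the family $\{\mathfrak{g}_i\}_{i\in I}$ computed in $\finL^{dc}$ to the coproduct of the family $\{\MC_\bullet(\mathfrak{g}_i)\}_{i\in I}$ computed in $\finS^{dc}$. The whole content of the corollary then lies in identifying these two homotopy-category coproducts with the concrete objects appearing in the statement: the curved Lie algebra coproduct $\coprod_{i\in I}\mathfrak{g}_i$ of Proposition \ref{prop_coprod} on the one side, and the simplicial disjoint union $\bigcup_{i\in I}\MC_\bullet(\mathfrak{g}_i)$ on the other.

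First I would treat the simplicial side, which is routine. In the standard model structure on $\mathscr{S}$ every object is cofibrant, so the categorical coproduct (the disjoint union) already computes the homotopy coproduct; equivalently, disjoint union preserves weak homotopy equivalences and a finite disjoint union of Kan complexes is again Kan. As a finite disjoint union of rational, nilpotent components of finite type is again of this form, the disjoint union $\bigcup_{i\in I}\MC_\bullet(\mathfrak{g}_i)$ represents the coproduct of the $\MC_\bullet(\mathfrak{g}_i)$ in $\finS^{dc}$.

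Next I would treat the Lie side. Here $\coprod_{i\in I}\mathfrak{g}_i$ is the categorical coproduct of Proposition \ref{prop_coprod}, and it lies in $\finL^{dc}$ directly by the definition of that subcategory, being a finite coproduct of curved Lie algebras whose associated graded complexes have finite dimensional homology. To see that it computes the coproduct in the homotopy category I would invoke the earlier lemma that $(-)\coprod\mathfrak{h}$ sends weak equivalences to weak equivalences for a fixed $\mathfrak{h}$; iterating over the finite set $I$ shows that the finite coproduct functor is homotopical, so no cofibrant replacement is needed and the categorical coproduct already represents the homotopy coproduct. Combining the two identifications with the fact that $\MC_\bullet$ preserves coproducts yields the required weak equivalence $\MC_\bullet(\coprod_{i\in I}\mathfrak{g}_i)\simeq\bigcup_{i\in I}\MC_\bullet(\mathfrak{g}_i)$.

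The only genuinely delicate point, and the one I would be most careful about, is precisely this passage between categorical and homotopy coproducts on the curved Lie algebra side: unlike in $\mathscr{S}$, not every object of $\curvedLie_*$ is cofibrant, and the coproduct functor is not exact (Remark \ref{remark_not_exact}), so one cannot take for granted that $\coprod_{i\in I}\mathfrak{g}_i$ models the homotopy coproduct. The homotopical invariance supplied by the coproduct lemma is exactly what repairs this, and it is the reason the finiteness of $I$ is used. Everything else is the formal observation that equivalences preserve colimits.
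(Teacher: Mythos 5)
Your proposal is correct and follows essentially the same route as the paper: the paper gives no separate proof of this corollary, remarking only that it follows from Theorem \ref{thm_rat_ho} because equivalences of categories preserve colimits, provided one works up to homotopy. Your write-up is a carefully fleshed-out version of exactly that remark, with the identification of categorical and homotopy coproducts on each side (the point the paper leaves implicit) handled correctly.
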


Let $\mathcal{MC} (-):=\pi_0 (\MC_\bullet (-))$ denote the Maurer-Cartan moduli set, i.e.~the set of Maurer-Cartan elements up to homotopy. This construction can be found for example in \cite{hamilton_laz_noncomm_geo,getzler,laz_markl}. An alternate construction in the general case of pronilpotent dglas given in \cite{schlessinger_stasheff_deformation_rational} is that the Maurer-Cartan moduli set can be described as the Maurer-Cartan set up to gauge equivalence. For more details regarding gauge equivalence and a proof of this statement, see \cite{chuang_laz_feynman,voronov_non-abelian}.

\begin{corollary}
Given $\coprod_{i\in I}\mathfrak{g}_i\in\finL^{dc}$, there exists an isomorphism of sets
\[
\mathcal{MC}\left(\coprod_{i\in I} \mathfrak{g}_i \right)\cong\bigcup_{i\in I}\mathcal{MC}(\mathfrak{g}_i).
\]
\end{corollary}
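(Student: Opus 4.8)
The plan is to derive this isomorphism formally from the immediately preceding corollary by applying the connected-components functor $\pi_0$. By the definition $\mathcal{MC}(-)=\pi_0(\MC_\bullet(-))$, the set on the left is $\pi_0(\MC_\bullet(\coprod_{i\in I}\mathfrak{g}_i))$, and the preceding corollary furnishes a weak equivalence of simplicial sets $\MC_\bullet(\coprod_{i\in I}\mathfrak{g}_i)\simeq\bigcup_{i\in I}\MC_\bullet(\mathfrak{g}_i)$. So the first step is simply to apply $\pi_0$ to this weak equivalence.

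The two facts I would then invoke are both standard properties of $\pi_0$. First, $\pi_0$ carries weak homotopy equivalences of simplicial sets to bijections of sets, since by definition a weak equivalence induces an isomorphism on $\pi_0$; this turns the weak equivalence above into a bijection of the corresponding sets of components. Second, $\pi_0$ is a left adjoint (to the inclusion of sets as discrete simplicial sets) and therefore commutes with coproducts, so $\pi_0(\bigcup_{i\in I}\MC_\bullet(\mathfrak{g}_i))\cong\bigcup_{i\in I}\pi_0(\MC_\bullet(\mathfrak{g}_i))$. Reassembling these observations gives the chain
\begin{align*}
\mathcal{MC}\left(\coprod_{i\in I}\mathfrak{g}_i\right)
&=\pi_0\left(\MC_\bullet\left(\coprod_{i\in I}\mathfrak{g}_i\right)\right)
\cong\pi_0\left(\bigcup_{i\in I}\MC_\bullet(\mathfrak{g}_i)\right)\\
&\cong\bigcup_{i\in I}\pi_0(\MC_\bullet(\mathfrak{g}_i))
=\bigcup_{i\in I}\mathcal{MC}(\mathfrak{g}_i),
\end{align*}
which is exactly the desired isomorphism.

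I do not expect any serious obstacle: the entire geometric content is already packaged into the preceding corollary, and the remaining argument is a short verification of the functoriality of $\pi_0$. The only point that warrants a moment's care is confirming that the weak equivalence of the preceding corollary is genuinely one of simplicial sets, so that $\pi_0$ may be applied to it directly; since $\pi_0$ is defined on all of $\mathscr{S}$ and sends weak equivalences to isomorphisms, this causes no difficulty.
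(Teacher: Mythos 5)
Your argument is correct and matches the paper's intended route: the paper states this corollary immediately after defining $\mathcal{MC}(-)=\pi_0(\MC_\bullet(-))$ and offers no further proof, precisely because it is meant to follow from the preceding corollary by applying $\pi_0$, which sends weak equivalences to bijections and commutes with disjoint unions. Your write-up simply makes that implicit deduction explicit.
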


Theorem \ref{thm_rat_ho} also has an application to mapping spaces. Recall that given a pseudo-compact curved Lie algebra, $(\mathfrak{g}, d_\mathfrak{g} , \omega_\mathfrak{g} )$, and a unital cdga, $A$, (both homologically graded) their completed tensor product possesses a well defined pseudo-compact curved Lie algebra structure, see Proposition \ref{prop_completed_tensor_Lie_structure}. Given such a tensor product, the MC elements can be studied in the usual manner as solutions to the MC equation
\[
\omega_\mathfrak{g} \hat{\otimes} 1 + d\xi +\frac{1}{2}[\xi ,\xi ].
\]
The MC elements of such a tensor product correspond to morphisms of cdgas, as the following shows. Let $\MC(\mathfrak{g},A):=\MC(\mathfrak{g}\hat{\otimes}A)$ be considered as a bifunctor.

\begin{proposition}\label{prop_iso_functors}
Given a pseudo-compact curved Lie algebra, $(\mathfrak{g},d_\mathfrak{g},\omega_\mathfrak{g})$, and a cdga, $A$, the two bifunctors $\MC(\mathfrak{g},A)$ and $\Hom_\mathscr{A} (\mathcal{C}(\mathfrak{g}),A)$ are naturally isomorphic.
\end{proposition}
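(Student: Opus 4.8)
The plan is to produce the natural isomorphism as a composite of three elementary natural bijections, and then to isolate the single substantive point: that the Maurer--Cartan condition on an element corresponds exactly to the compatibility of the associated algebra map with the differentials. Much of the correspondence has already been sketched in prose preceding the statement, so the task is to organise it and to identify the one calculation that carries genuine content.

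First I would unwind $\MC(\mathfrak{g},A)=\MC(\mathfrak{g}\hat{\otimes}A)$. A Maurer--Cartan element is a degree $-1$ element $\xi\in\mathfrak{g}\hat{\otimes}A$; reindexing by the suspension identifies such $\xi$ with a degree $0$ element of $(\Sigma^{-1}\mathfrak{g})\hat{\otimes}A$, equivalently a continuous degree-zero linear morphism $k\to(\Sigma^{-1}\mathfrak{g})\hat{\otimes}A$. Because $\mathfrak{g}$ is pseudo-compact one has $V^{**}\cong V$, so this datum is equivalently a continuous linear morphism $(\Sigma^{-1}\mathfrak{g})^*\to A$ of degree zero; using $(\Sigma^{-1}\mathfrak{g})^*\cong\Sigma\mathfrak{g}^*$ this is a degree-zero linear map $\Sigma\mathfrak{g}^*\to A$. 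Finally, since $\C(\mathfrak{g})=S\Sigma\mathfrak{g}^*$ is the free graded commutative unital algebra on $\Sigma\mathfrak{g}^*$, such a linear map extends uniquely to a morphism of graded commutative algebras $\phi_\xi:\C(\mathfrak{g})\to A$. Each step is manifestly natural in $A$, and natural in $\mathfrak{g}$ by functoriality of $\C$, so together they give a natural bijection between degree $-1$ elements of $\mathfrak{g}\hat{\otimes}A$ and graded-algebra morphisms $\C(\mathfrak{g})\to A$.

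The remaining and only genuine step is to show that $\xi$ is a Maurer--Cartan element if and only if $\phi_\xi$ is in addition a morphism of cdgas, i.e.\ commutes with the differentials. Since $\phi_\xi$ is an algebra map and both the Chevalley--Eilenberg differential on $\C(\mathfrak{g})$ and the differential on $A$ are derivations, compatibility need only be tested on the generators $\Sigma\mathfrak{g}^*$. There the Chevalley--Eilenberg differential splits into the three pieces dual to the curvature $\omega_\mathfrak{g}$, the differential $d_\mathfrak{g}$, and the bracket $[\,,\,]$ of $\mathfrak{g}$ (cf.\ Definition \ref{def_CE}). I would compute the defect $d_A\circ\phi_\xi-\phi_\xi\circ d_{\C}$ on a generator and recognise, after dualising, that pairing it against $\Sigma\mathfrak{g}^*$ reproduces term by term the three summands $\omega_\mathfrak{g}\hat{\otimes}1$, $d\xi$, and $\tfrac12[\xi,\xi]$ of the Maurer--Cartan equation; the defect then vanishes exactly when $\xi$ solves that equation, yielding $\MC(\mathfrak{g},A)\cong\Hom_\mathscr{A}(\C(\mathfrak{g}),A)$.

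The main obstacle is precisely this matching of the curved Maurer--Cartan defect with the failure of $\phi_\xi$ to be a chain map. The difficulty is entirely bookkeeping: one must track the Koszul signs introduced by the suspension isomorphisms and by the switch between the homological grading on $\mathfrak{g}$ and the cohomological grading on $A$ and $\C(\mathfrak{g})$, confirm that the linear curvature part of the differential lands in the scalars and so pairs with $\omega_\mathfrak{g}\hat{\otimes}1$, that the linear differential part pairs with $d\xi$, and---most delicately---that the quadratic bracket-dual part produces exactly the factor $\tfrac12[\xi,\xi]$ from the symmetric-algebra combinatorics. Once the signs and the factor of $\tfrac12$ are pinned down the equivalence is immediate, and naturality has already been secured in the first stage. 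I note that this result can also be read as a Maurer--Cartan reformulation of the adjunction isomorphism of Proposition \ref{prop_adjoint}.
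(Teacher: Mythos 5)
Your proposal is correct and follows essentially the same route as the paper, which establishes this proposition in the discussion immediately preceding it: a degree $-1$ element of $\mathfrak{g}\hat{\otimes}A$ is unwound via pseudo-compact duality to a linear map $\Sigma\mathfrak{g}^*\to A$, extended freely to a graded algebra morphism $\C(\mathfrak{g})\to A$, and the Maurer--Cartan equation is identified with compatibility with the differentials. Your explicit isolation of the sign/combinatorial bookkeeping in that last step is a reasonable elaboration of what the paper leaves as an assertion, but it is the same argument.
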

\begin{proof}
Given some degree minus one element of $\mathfrak{g}\hat{\otimes} A$, it corresponds precisely to a continuous linear morphism $k\to(\Sigma^{-1}\mathfrak{g} )\hat{\otimes}A$ of degree zero. This continuous linear morphism, in turn, defines (and is defined by) a linear morphism $(\Sigma^{-1}\mathfrak{g})^*\to A$ which extends uniquely to a morphism of graded commutative algebras $\mathcal{C}(\mathfrak{g})\to A$. The condition that this morphism is in fact one of cdga is precisely the one that the original element is a MC element.
\end{proof}

This result extends to the level of homotopy. It is necessary to first, however, to recall the definition of a homotopy of MC elements.

\begin{definition}
Let $k[z,dz]$ be the free unital cdga on generators $z$ and $dz$ of degrees $0$ and $1$ respectively, subject to the condition $d(z)=dz$.

Given some unital cdga $A$, let $A[z,dz]$ denote the unital cdga given by the tensor $A\otimes k[z,dz]$. Further, denote the quotient morphisms by setting $z=0,1$ by
\[
|_0,|_1 \colon A[z,dz]\to A.
\]
\end{definition}

\begin{definition}
Given a pseudo-compact curved Lie algebra, $\mathfrak{g}$, and a unital cdga, $A$, two elements $\xi,\eta \in \MC(\mathfrak{g},A)$ are said to be homotopic if there exists
\[
h\in \MC(\mathfrak{g},A[z,dz])
\]
such that $h|_0=\xi$ and $h|_1 =\eta$.
\end{definition}

As Proposition \ref{prop_iso_functors} shows, a homotopy of MC elements is nothing more than a Sullivan homotopy, i.e.~a right homotopy with path object $A[z,dz]$. Therefore, two elements of $\mathcal{MC}(\mathfrak{g},A)$ belong to the same class if, and only if, the two corresponding morphisms $\mathcal{C}(\mathfrak{g})\to A$ belong to the same homotopy class.

\begin{corollary}\label{cor_mapping_space}
Given $X,Y\in\finS^{dc}$, then
\[
\Hom_{\finS^{dc}}(X,Y)\cong \mathcal{MC}(\mathcal{L}\Omega (Y)\otimes \Omega (X)).
\]
\end{corollary}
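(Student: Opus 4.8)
The plan is to splice together the chain of equivalences from Theorem \ref{thm_rat_ho} with the Maurer-Cartan description of cdga morphisms from Proposition \ref{prop_iso_functors}, using the counit $i$ to close the loop. Throughout I would write $\mathfrak{g}_Y := \mathcal{L}\Omega(Y)$, which (up to weak equivalence) lies in $\finL^{dc}$, and I would exploit that $\MC_\bullet$ and $\hatL\Omega$ descend to mutually inverse equivalences $\finL^{dc}\simeq\finS^{dc}$. Since the target identity is between two \emph{sets}, namely homotopy classes of maps on the one side and a moduli set on the other, everything takes place at the level of $\pi_0$.

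First I would reduce the topological side to an algebraic one. Because $X,Y\in\finS^{dc}$, the de Rham functor carries them into $\finA^{dc}_{\geq 0}$, and the equivalence $\finS^{dc}\sim\finA^{dc}$ implicit in Theorem \ref{thm_rat_ho} is realised \emph{contravariantly} by $\Omega$ (this is the source/target reversal built into the Sullivan--de Rham duality and into the functors $\C,\hatL$, all of which reverse arrows). Consequently
\[
\Hom_{\finS^{dc}}(X,Y)\cong\Hom_{\ho\mathscr{A}}(\Omega(Y),\Omega(X)),
\]
the exchange of $X$ and $Y$ being exactly the contravariance of $\Omega$.

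Next I would compute the right-hand side of the corollary directly. By the bifunctorial convention introduced before Proposition \ref{prop_iso_functors}, $\mathcal{MC}(\mathcal{L}\Omega(Y)\otimes\Omega(X))=\mathcal{MC}(\mathfrak{g}_Y,\Omega(X))$, and by Proposition \ref{prop_iso_functors} together with the homotopy refinement recorded just above this corollary (a homotopy of $\MC$ elements is a Sullivan homotopy with path object $A[z,dz]$), an element of $\mathcal{MC}(\mathfrak{g}_Y,\Omega(X))$ is the same datum as a morphism $\C(\mathfrak{g}_Y)\to\Omega(X)$ of $\mathscr{A}$, with two such identified in the moduli set precisely when the corresponding cdga morphisms are homotopic. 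Hence
\[
\mathcal{MC}(\mathcal{L}\Omega(Y)\otimes\Omega(X))\cong\Hom_{\ho\mathscr{A}}\bigl(\C\hatL\Omega(Y),\Omega(X)\bigr).
\]
Finally, the weak equivalence $i_{\Omega(Y)}:\C\hatL(\Omega(Y))\to\Omega(Y)$ is an isomorphism in $\ho\mathscr{A}$, so it induces a bijection $\Hom_{\ho\mathscr{A}}(\C\hatL\Omega(Y),\Omega(X))\cong\Hom_{\ho\mathscr{A}}(\Omega(Y),\Omega(X))$. Concatenating the three displays yields the asserted isomorphism.

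The main obstacle I anticipate is not any single computation but the bookkeeping required to legitimise the substitutions: one must verify that the bifunctor $\mathcal{MC}(-\otimes-)$ is homotopy invariant in each variable, so that replacing $\Omega(Y)$ by its minimal-model representative in $\finL^{dc}$, and replacing $\C\hatL\Omega(Y)$ by $\Omega(Y)$, leave the moduli set unchanged; and one must track the contravariance of $\Omega$ consistently, so that the source/target reversal in the first display matches the one produced by Proposition \ref{prop_iso_functors} through the arrow-reversing functor $\C$. Homotopy invariance here reduces to the compatibility of the path object $A[z,dz]$ with the completed tensor construction, which is the one place where genuine care (rather than formal manipulation) is needed.
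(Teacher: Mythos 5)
Your proposal is correct and follows essentially the same route as the paper: reduce $\Hom_{\finS^{dc}}(X,Y)$ to homotopy classes of cdga morphisms $\Omega(Y)\to\Omega(X)$ via the contravariant Sullivan--de Rham equivalence, then identify these with $\mathcal{MC}(\mathcal{L}\Omega(Y)\otimes\Omega(X))$ using Proposition \ref{prop_iso_functors} and its homotopy refinement. The only difference is that you make explicit the intermediate identification through $\C\hatL\Omega(Y)$ and the counit $i_{\Omega(Y)}$, which the paper leaves implicit in the phrase ``contained within the discussion above.''
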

\begin{proof}
There is clearly an isomorphism
\[
\Hom_{\finS^{dc}}(X,Y) \cong \Hom_{\finA^{dc}} (\Omega (Y),\Omega(X)).
\]
It therefore suffices to show that there exists an isomorphism of $\mathcal{MC}(\mathcal{L}\Omega (Y)\otimes \Omega (X))$ and homotopy classes of morphisms $\Omega (Y) \to \Omega (X)$ which is contained within the discussion above.
\end{proof}

\begin{remark}\label{rem_models}
In \cite[Theorem 8.1]{andrey_MC} an explicit model for every connected component of the mapping space between two connected rational, nilpotent CW complexes of finite type, $X$ and $Y$, is given; it is further assumed that either $X$ is a finite CW complex or $Y$ has a finite Postnikov tower, because this ensures the spaces of maps between $X$ and $Y$ are homotopically equivalent to finite type complexes. Whereas the result Corollary \ref{cor_mapping_space} gives a model for the whole mapping space. Therefore, in the case when the two spaces, $X$ and $Y$, are both sufficiently nice (i.e.~are both composed of finitely many connected components each rational, nilpotent, and of finite type with either $X$ being a finite CW complex or $Y$ having a finite Postnikov tower and such that the space of maps has finitely many connected components), the results \cite[Theorem 8.1]{andrey_MC} and Corollary \ref{cor_mapping_space} could be combined to construct a model for the mapping space as a coproduct of finitely many MC moduli spaces. The material developed in this paper, however, does not allow the extension to the case where at least one of the spaces, $X$ or $Y$, fails to meet the aforementioned constraints, or to the case when the space of maps has infinitely many connected components, and this can happen in some seemingly straightforward cases; the space of maps between $2$-dimensional spheres, for example, has infinitely many connected components.
\end{remark}

\section*{Acknowledgements}{The author would like to thank Andrey Lazarev for many helpful and fruitful discussions, and for his corrections and remarks on the first draft of the paper. Paul Levy and Theodore Voronov are also owed thanks for their helpful comments on the paper.}

\bibliography{my_bib}{}

\end{document}